%Typeset in AMS-LaTEX

\documentclass[11pt,reqno]{amsart}

\usepackage{amssymb}
\usepackage{amsmath}
\usepackage{amsthm}
\usepackage{graphicx,psfrag}
\usepackage{enumerate}
 \usepackage{overpic}

\usepackage[margin=1.2in]{geometry}

\usepackage[all]{xy}
\usepackage[dvipsnames]{xcolor}
\usepackage{hyperref}

\theoremstyle{plain}
\newtheorem{theorem}{Theorem}[section]
\newtheorem{lemma}[theorem]{Lemma}
\newtheorem{corollary}[theorem]{Corollary}

\newtheorem{conjecture}[theorem]{Conjecture}

\theoremstyle{definition}
\newtheorem{definition}[theorem]{Definition}
\newtheorem*{notation}{Notation}

\theoremstyle{remark}
\newtheorem{remark}[theorem]{Remark}

\newtheorem*{remark*}{Remark}
\newtheorem*{claim*}{Claim}
\newtheorem{claim}{Claim}
\newtheorem{claims}{Claim}[section]

\newtheorem*{acknowledgments}{Acknowledgments}

\numberwithin{figure}{section}

\newcommand{\Int}{\mathrm{int}}

\begin{document}

\title{Heegaard genus, degree-one maps, and amalgamation of 3-manifolds}
\author{Tao Li}
\address{Department of Mathematics \\
 Boston College \\
 Chestnut Hill, MA 02467}
\email{taoli@bc.edu}
\thanks{Partially supported by an NSF grant}

\begin{abstract}
Let $M=\mathcal{W}\cup_\mathcal{T} \mathcal{V}$ be an amalgamation of two compact 3-manifolds along a torus, where $\mathcal{W}$ is the exterior of a knot in a homology sphere. 
Let $N$ be the manifold obtained by replacing $\mathcal{W}$ with a solid torus such that the boundary of a Seifert surface in $\mathcal{W}$ is a meridian of the solid torus.  This means that there is a degree-one map $f\colon M\to N$, pinching $\mathcal{W}$ into a solid torus while fixing $\mathcal{V}$. We prove that $g(M)\ge g(N)$, where $g(M)$ denotes the Heegaard genus.  An immediate corollary is that the tunnel number of a satellite knot cannot be smaller than the tunnel number of its pattern knot.
\end{abstract}

\maketitle

%\tableofcontents

\section{Introduction}\label{Sintro} 

Degree-one maps are fundamental objects in topology.   For 3-manifolds, such maps have close relations with the geometrization of 3-manifolds as well as many topological properties, e.g.~see \cite{BW, HWZ,  Rong, So, WZ}. 
It has been known for a long time that maps of nonzero degree between surfaces are standard \cite{E}.  However, many important questions remain open for maps between 3-manifolds.  
One of the most fundamental questions on degree-one maps between 3-manifolds is the relation between their Heegaard genera.  

\begin{conjecture}\label{Qgenus}
 Let $M$ and $N$ be closed orientable 3-manifolds and suppose there is a degree-one map $f\colon M\to N$.  Then $g(M)\ge g(N)$, where $g(M)$ is the Heegaard genus of $M$.
\end{conjecture}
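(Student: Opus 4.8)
The plan is to attack Conjecture~\ref{Qgenus} by reducing it, via the Geometrization Theorem and the JSJ decomposition, to a short list of ``building block'' estimates, one of which is exactly the amalgamation inequality established in this paper. I start with the cheap structural input: a degree-one map $f\colon M\to N$ is $\pi_1$--surjective and $H_*$--surjective, so $\operatorname{rank}\pi_1(M)\ge\operatorname{rank}\pi_1(N)$. Since $\operatorname{rank}\pi_1(L)\le g(L)$ for every closed $3$--manifold $L$, the conjecture would be immediate if one had $g(N)=\operatorname{rank}\pi_1(N)$. That equality is false in general (Boileau--Zieschang; Schultens--Weidmann), so Conjecture~\ref{Qgenus} is genuinely a statement about Heegaard genus, not about rank, and any proof must engage with the geometry of $f$ rather than only with $\pi_1$. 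It does, however, settle some cases: if $N$ is a spherical space form then $g(N)=\operatorname{rank}\pi_1(N)$, so the rank estimate already gives $g(M)\ge g(N)$; and if $N$ is prime but not irreducible, i.e.\ $N\cong S^1\times S^2$, then $f$ being $\pi_1$--surjective forces $M\neq S^3$, hence $g(M)\ge 1=g(N)$.

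Next I would normalize $f$ against the prime and then the JSJ decomposition of $N$. Using Haken's additivity $g(N_1\#N_2)=g(N_1)+g(N_2)$ together with the standard reduction that a degree-one map into a connected sum is homotopic to one restricting to degree-one maps on the summands (make $f$ transverse to the reducing spheres and compress $f^{-1}$ of them), the problem reduces to $N$ prime, and a parallel argument on reducing spheres of $M$ lets us take $M$ irreducible as well. With $N$ irreducible, the cases in which $N$ is Seifert fibered are already known (Rong; Boileau--Wang), so the substantive situation is that $N$ has a nontrivial JSJ decomposition, possibly containing hyperbolic pieces. One then wants to homotope $f$ to be \emph{standard} along the JSJ tori $\mathcal{T}_N$: after compressions and isotopy, $f^{-1}(\mathcal{T}_N)$ should be a sub-collection of JSJ tori of $M$; each JSJ piece of $M$ should map either by a degree-one map onto a JSJ piece of $N$ or ``collapse'' onto a lower-complexity portion of $N$; and on Seifert pieces $f$ should be fiber-preserving. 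This is the normal form used in the study of nonzero-degree maps and graph manifolds (Rong; Boileau--Wang).

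Granting such a JSJ-standard form, I would assemble the genus bound by induction along the JSJ graph of $N$. Cutting $N$ and $M$ along the relevant tori, one relates $g(N)$ and $g(M)$ to the genera of the pieces through the now well-developed theory of Heegaard genus under gluing along incompressible tori (subadditivity and amalgamation bounds of Schultens and of Bachman--Schleimer--Sedgwick). Per piece one needs three inequalities: (i) for a Seifert piece, a degree-one map onto it does not decrease genus (Rong; Boileau--Wang); (ii) for a ``collapse'' across a JSJ torus that replaces a knot-exterior-type piece $\mathcal{W}$ of $M$ by a solid-torus-type piece of $N$, one needs precisely the inequality $g(\mathcal{W}\cup_{\mathcal{T}}\mathcal{V})\ge g(N)$ proved in this paper, applied at each such torus; and (iii) for a hyperbolic piece, a degree-one map between compact hyperbolic $3$--manifolds with torus boundary must not decrease genus. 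Feeding (i)--(iii) into the toral-gluing estimates should yield $g(M)\ge g(N)$, with the amalgamation result of the present paper furnishing the combinatorial glue between the geometric blocks.

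The main obstacle lives entirely in the last two steps, and it is why this remains a conjecture. First, producing the JSJ-standard form for an \emph{arbitrary} degree-one map is itself open in general: it is available when the target is a graph manifold or is geometric, but controlling $f^{-1}(\mathcal{T}_N)$ up to compressions in the presence of hyperbolic pieces is not known. Second, and more seriously, ingredient (iii) --- that a degree-one domination between hyperbolic $3$--manifolds cannot decrease Heegaard genus --- is the genuine hard core; it is entangled with the relationship among Heegaard genus, hyperbolic volume, and $\pi_1$--rank, where, as noted above, even the clean equality $\operatorname{rank}=g$ fails. Thus the strategy above does not close the conjecture: it reduces Conjecture~\ref{Qgenus} to (a) JSJ-compatibility of degree-one maps and (b) the hyperbolic domination case, and the concrete, self-contained advance one can make within this program --- and what this paper contributes --- is to establish ingredient (ii), the genus inequality for the torus-amalgamation ``collapse,'' from which the tunnel-number statement for satellite knots follows at once.
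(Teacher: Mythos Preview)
The statement is a \emph{conjecture}, and the paper does not prove it; it proves only the special case Theorem~\ref{Ttorus}. Your write-up is not a proof either, as you yourself say in the last paragraph, so there is no genuine discrepancy to adjudicate: neither you nor the paper closes Conjecture~\ref{Qgenus}.

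That said, your proposed reduction is quite different from the one the paper actually pursues. The paper does not go through Geometrization or the JSJ decomposition at all; instead it uses the Haken--Waldhausen theorem to put any degree-one map in the form ``homeomorphism on one handlebody, pinch on the other,'' and thereby reformulates Conjecture~\ref{Qgenus} as the surgery statement Conjecture~\ref{Qgenus3}. Theorem~\ref{Ttorus} is then the genus-one instance of that surgery statement. Your scheme instead tries to localize along JSJ tori of $N$ and assemble per-piece inequalities; this is a reasonable heuristic, but several of your ``known'' ingredients are overstated. The Boileau--Wang result you cite is for \emph{small} $3$--manifolds, not for arbitrary Seifert targets, so ingredient (i) is not established in the generality you need. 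The claimed connected-sum reduction also hides real work: making $f$ split as degree-one maps onto the prime summands of $N$ while simultaneously controlling the Heegaard genus of the domain pieces is not automatic. And, as you note, there is no JSJ normal form for degree-one maps with hyperbolic pieces in the target, nor any genus inequality for hyperbolic dominations.

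In short: your proposal is an outline of a program, with the honest admission that its key steps are open. The paper's contribution is orthogonal to most of that program; it supplies the torus-collapse inequality (your ingredient (ii)) via an equivalent reformulation (Conjecture~\ref{Qgenus3}) rather than via JSJ, and leaves the general conjecture open.
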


Conjecture~\ref{Qgenus} is an old and difficult question in 3-manifold topology. 
It implies the Poincar\'{e} Conjecture: If a closed 3-manifold $N$ is homotopy equivalent to $S^3$, since a homotopy equivalence is a degree-one map, Conjecture~\ref{Qgenus} implies that $0=g(S^3)\ge g(N)$ and $N$ must be $S^3$.  

There is a general strategy of proving Conjecture~\ref{Qgenus} dated back to Haken and Waldhausen. 
Suppose that $f\colon M\to N$ is a degree-one map.  Let $N=\mathcal{W}_N\cup \mathcal{V}_N$ be a minimal genus Heegaard splitting of $N$, where $\mathcal{W}_N$ and $\mathcal{V}_N$ are genus-$g$ handlebodies.  
Let $\mathcal{V}=f^{-1}(\mathcal{V}_N)$ and $\mathcal{W}=f^{-1}(\mathcal{W}_N)$. 
By a theorem of Haken \cite{H1} and Waldhausen \cite{W1} (also see \cite{RW}), after some homotopy on the degree-one map $f$, we may assume that  $f|_{\mathcal{V}}\colon \mathcal{V}\to \mathcal{V}_N$ is a homeomorphism.    So we have 
 $M=\mathcal{W}\cup \mathcal{V}$, where $\partial \mathcal{W}=\partial \mathcal{V}=\mathcal{W}\cap \mathcal{V}$,
   $\mathcal{V}$ is a genus-$g$ handlebody, and  $f|_{\mathcal{V}}$ is a homeomorphism.

Consider $\mathcal{W}$, $\mathcal{W}_N$ and the map $f|_\mathcal{W}\colon \mathcal{W}\to \mathcal{W}_N$.  Let $\mathcal{T}=\partial \mathcal{W}=\partial \mathcal{V}$. For any compressing disk $D$ in the handlebody $\mathcal{W}_N$, after some homotopy on $f|_\mathcal{W}$, we may assume that $f^{-1}(D)$ is an incompressible surface in $\mathcal{W}$.  Thus there is a collection of $g$ non-separating simple closed curves $\gamma_1,\dots,\gamma_g$ in $\mathcal{T}$, such that 
\begin{enumerate}
  \item  $\mathcal{T}-\cup_{i=1}^g \gamma_i$ is connected and 
  \item  each $\gamma_i$ is the boundary of an incompressible surface in $\mathcal{W}$ and these incompressible surfaces are disjoint. 
\end{enumerate}
Note that, given any 3-manifold $\mathcal{W}$ with connected genus-$g$ boundary, if $\mathcal{W}$ satisfies these two conditions, one can always construct a degree-one map $f\colon \mathcal{W}\to \mathcal{W}_N$ by pinching each incompressible surface into a disk and pinching the complement of the incompressible surfaces into a 3-ball.  So we can formulate the question in Conjecture~\ref{Qgenus} as a question about surgery.

\begin{conjecture}\label{Qgenus3}
Suppose $M=\mathcal{W}\cup_\mathcal{T} \mathcal{V}$ with $\mathcal{T}=\partial \mathcal{W}=\partial \mathcal{V}=\mathcal{W}\cap \mathcal{V}$ a genus-$g$ surface.  Suppose $\mathcal{W}$ satisfies the two conditions above.  Let $N$ be the closed 3-manifold obtained by replacing $\mathcal{W}$ with a genus-$g$ handlebody $H$ such that each $\gamma_i$ in the conditions above bounds a disk in $H$.  Then $g(M)\ge g(N)$.
\end{conjecture}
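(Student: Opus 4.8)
The plan is to start from a minimal-genus Heegaard surface $S$ of $M$ and push it forward through the degree-one pinch $f\colon M\to N$ so as to produce a Heegaard surface of $N$ of genus at most $g(M)$; this gives $g(N)\le g(M)$. Recall that $f$ is the identity on $\mathcal{V}$ and collapses $\mathcal{W}$ onto the handlebody $H$, taking the disjoint incompressible surfaces $F_1,\dots,F_g\subset\mathcal{W}$ (with $\partial F_i=\gamma_i$) onto meridian disks of $H$ and mapping $\mathcal{W}\setminus\bigcup_i F_i$ onto $H$ with these disks removed; in the main case, where $\mathcal{W}$ is the exterior of a knot $K$ in a homology sphere, there is a single Seifert surface $F=F_1$, $H$ is a solid torus, and $\mathcal{W}\setminus F$ maps onto a ball. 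So the problem is to isotope $S$ into a position compatible with this collapse and to control the resulting change of genus. At the outset one discards the degenerate cases: if $\mathcal{W}$ is itself a handlebody with the $\gamma_i$ as meridians then $M\cong N$, and if $M$ or $N$ is reducible one reduces to the prime case, using additivity of Heegaard genus under connected sum together with the $\pi_1$-surjectivity of $f$. In what remains, $\partial\mathcal{W}$ is incompressible in $\mathcal{W}$, so the $F_i$ are incompressible and $\partial$-incompressible there.

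Suppose first that $\partial\mathcal{V}$ is also incompressible in $\mathcal{V}$, so that $\mathcal{T}$ is incompressible in $M$. Then I would invoke the standard machinery for putting a Heegaard surface into thin position with respect to an incompressible surface: untelescope $S$ (Scharlemann--Thompson) into a generalized Heegaard splitting and isotope so that its thin, incompressible levels are disjoint from $\mathcal{T}$, after which $\mathcal{T}$ itself may be inserted among the thin levels. This exhibits $S$ as the amalgamation along $\mathcal{T}$ of a generalized Heegaard splitting of $\mathcal{W}$ rel $\partial\mathcal{W}$ with one of $\mathcal{V}$ rel $\partial\mathcal{V}$, and $g(M)$ equals the amalgamated genus. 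Since $H$ carries a Heegaard splitting rel $\partial H$ of the minimum possible complexity, the $\mathcal{W}$-side data may be replaced by the trivial data on $H$ without increasing complexity; and because $f$ is the identity on $\mathcal{T}$, this re-amalgamates with the unchanged $\mathcal{V}$-side data to give a Heegaard splitting of $N=H\cup_\mathcal{T}\mathcal{V}$ whose amalgamated genus does not exceed $g(M)$. Hence $g(N)\le g(M)$ in this case.

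The main obstacle is the remaining case, where $\partial\mathcal{V}$ is compressible in $\mathcal{V}$ --- which is exactly the setting of Conjecture~\ref{Qgenus3}, in which $\mathcal{V}$ is a handlebody. Now $\mathcal{T}$ need not be incompressible in $M$, the surface $S$ may be strongly irreducible, and $S$ can still meet $\bigcup_i F_i$ essentially and intricately, so there is no clean decomposition of $S$ along $\mathcal{T}$ to exploit. Here I expect to need the sweepout and Rubinstein--Scharlemann graphic machinery: view $S$ as a sweepout of $M$, compare it against the surfaces $F_i\subset\mathcal{W}$ (which remain incompressible in $\mathcal{W}$ irrespective of $\mathcal{V}$), and use the labelled graphic to locate $S$ so that $S\cap\bigcup_i F_i$ is controlled tightly enough that collapsing each $F_i$ to a disk --- and then isotoping and compressing the resulting, a priori non-embedded, image in $N$ --- reconstitutes an embedded Heegaard surface of $N$ with no increase in genus. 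Making this ``collapse and re-embed'' step precise, tracking the arcs as well as the circles of $S\cap\bigcup_i F_i$ and ensuring that strong irreducibility (or, after a further untelescoping, the incompressibility of the thin levels produced in $N$) survives the pushforward, is where I expect the real difficulty to lie; it is also where the hypothesis that $\mathcal{W}$ is a knot exterior in a homology sphere --- so that $\bigcup_i F_i$ is a single Seifert surface with connected boundary and connected complement in $\mathcal{T}$ --- enters essentially, which is why one treats this case rather than all of Conjecture~\ref{Qgenus3} at once.
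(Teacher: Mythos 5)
The statement you set out to prove is Conjecture~\ref{Qgenus3}, which the paper does not prove and does not claim to prove: it is presented as an open question equivalent to Conjecture~\ref{Qgenus}, and the paper establishes only the special case Theorem~\ref{Ttorus} ($\mathcal{T}$ a torus, $\mathcal{W}$ a knot exterior in a homology sphere). Your text is likewise not a proof of either statement. The second half is an explicit plan (``I expect to need\dots'', ``where I expect the real difficulty to lie'') that defers precisely the step carrying all the content, so even the torus case is not established; and in Conjecture~\ref{Qgenus3} as stated $\mathcal{V}$ is an arbitrary compact manifold with $\partial\mathcal{V}=\mathcal{T}$, so your case division by compressibility of $\partial\mathcal{V}$ in $\mathcal{V}$ does not even line up with the statement.

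More importantly, the one case you do argue in detail contains a false step, and it is exactly the case where the real work lies. When $\mathcal{T}$ is incompressible you assert that after untelescoping one can isotope the thin levels off $\mathcal{T}$, ``after which $\mathcal{T}$ itself may be inserted among the thin levels.'' Bachman--Schleimer--Sedgwick (Lemma~\ref{LBSS}) gives only a dichotomy: either $\mathcal{T}$ is a thin level, or the incompressible and strongly irreducible surfaces $\Sigma$ of the untelescoping meet $\mathcal{T}$ essentially; the second possibility cannot be removed. If $\mathcal{T}$ could always be made a thin level, then $g(M)=g(\mathcal{W})+g(\mathcal{V})-1\ge g(\mathcal{V})\ge g(N)$ (Remark~\ref{Rtorus}) and Theorem~\ref{Ttorus} would be immediate; but Heegaard genus does degenerate under amalgamation along a torus (e.g.\ the examples with $t(k_1\# k_2)=t(k_1)$ cited from \cite{L18}), and for such $M$ the minimal splitting is not an amalgamation along $\mathcal{T}$, so your first case breaks down. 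Note also that in the torus theorem the compressible case you flag as ``the main obstacle'' is the trivial one ($\mathcal{V}$ irreducible with compressible boundary torus is a solid torus); you have the difficulty backwards. The bulk of the paper is devoted to the case $\Sigma\cap\mathcal{T}\ne\emptyset$: Lemma~\ref{Ldivide} puts $\Sigma$ in good position so that it cuts $\mathcal{W}$ and $\mathcal{V}$ into stacks of relative compression bodies, Sections~\ref{StypeI}--\ref{Snonsep} (marked handlebodies, suspension surfaces, type I and II blocks, Lemma~\ref{LtypeI}) build inside the solid torus $\widehat{\mathcal{T}}$ a stack similar to the one in $\mathcal{W}$, treating separating and non-separating components of $\Sigma\cap\mathcal{W}$ separately and using the homology hypothesis on $\mathcal{W}$, and Lemma~\ref{Lmerge} reglues this to $\Sigma\cap\mathcal{V}$ to produce a generalized splitting of $N$ of genus $g(M)$. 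None of this is replaced by your appeal to Rubinstein--Scharlemann graphics, which in any event you do not carry out.
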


Let $M$ and $N$ be the 3-manifolds in Conjecture~\ref{Qgenus3}. There is a degree-one map $f\colon M\to N$ pinching $\mathcal{W}$ into a handlebody while fixing $\mathcal{V}$.  Thus Conjecture~\ref{Qgenus3} follows from Conjecture~\ref{Qgenus}.  
Conversely, if Conjecture~\ref{Qgenus3} holds, then by the theorem of Haken \cite{H1} and Waldhausen \cite{W1} explained earlier, Conjecture~\ref{Qgenus} holds.  So the two conjectures are equivalent and 
a key to understanding a degree-one map between two closed 3-manifolds is the pinching map $f\colon \mathcal{W}\to \mathcal{W}_N$ which pinches $\mathcal{W}$ into a handlebody.  

In this paper, we study the genus one case, i.e.~the case that $\mathcal{T}$ is a torus.  Let $\mathcal{W}$ be the exterior of a knot in a homology sphere.  The Seifert surface of the knot gives a non-separating surface in $\mathcal{W}$.  
Hence $\mathcal{W}$ satisfies the two conditions above.   
We prove that Conjecture~\ref{Qgenus3} holds if $\mathcal{W}$ be a knot exterior in a homology sphere.

\begin{theorem}\label{Ttorus}
Let $M=\mathcal{W}\cup_\mathcal{T} \mathcal{V}$, where $\mathcal{W}$ is the exterior of a knot in a homology sphere and $\mathcal{T}$ is a torus.  Let $N=\widehat{\mathcal{T}}\cup_\mathcal{T} \mathcal{V}$ be the manifold obtained by replacing $\mathcal{W}$ with a solid torus such that the boundary of a Seifert surface in $\mathcal{W}$ is a meridian of the solid torus.  Then $g(M)\ge g(N)$.
\end{theorem}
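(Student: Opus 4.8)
The plan is to take a minimal‑genus Heegaard surface $S$ of $M$ and use it to manufacture a Heegaard surface of $N$ of genus at most $g(S)$. Since $\mathcal{V}$ is a common piece of $M$ and $N$, all the difficulty is concentrated in the knot exterior $\mathcal{W}$ and in the way $S$ meets the torus $\mathcal{T}$. After the usual reductions (splitting off connected summands so that $M$ is irreducible, and treating by hand the degenerate cases in which $K$ is trivial or $\mathcal{T}$ is compressible in $\mathcal{V}$), we may assume $\mathcal{T}$ is an incompressible torus in $M$, so that $M$ and $N$ are Haken.

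First I would put $S$ in good position: after an isotopy minimizing $|S\cap\mathcal{T}|$, every component of $S\cap\mathcal{T}$ is essential in $\mathcal{T}$, hence they are $p\ge 1$ parallel copies of a single slope $r$. Then I would untelescope $S$ in the sense of Scharlemann--Thompson, carried out compatibly with the decomposition $M=\mathcal{W}\cup_{\mathcal{T}}\mathcal{V}$; the point is that the thin levels of an untelescoped splitting are incompressible, and an incompressible surface can be isotoped to meet the essential torus $\mathcal{T}$ only in essential (hence parallel) curves. This produces generalized Heegaard splittings of $\mathcal{W}$ and of $\mathcal{V}$, each relative to $\mathcal{T}$, whose amalgamation along $\mathcal{T}$ is a Heegaard surface of $M$ of genus at most $g(S)$. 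Tracking genera through the amalgamation formula (each amalgamation along the torus $\mathcal{T}$ costs $-1$) gives
\[
  g(M)\ \ge\ g_{\mathcal{W}}+g_{\mathcal{V}}-1,
\]
where $g_{\mathcal{W}}$ and $g_{\mathcal{V}}$ denote the genera of the Heegaard surfaces obtained by amalgamating the two induced generalized splittings of $\mathcal{W}$ and of $\mathcal{V}$.

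The heart of the proof is the following claim: there is a Heegaard splitting of the solid torus $\widehat{\mathcal{T}}$, relative to $\mathcal{T}$ and inducing on $\mathcal{T}$ exactly the curves $S\cap\mathcal{T}$, of genus at most $g_{\mathcal{W}}$. This is where the hypotheses on $\mathcal{W}$ are used decisively. Because $\mathcal{W}$ is the exterior of a knot in a homology sphere, $H_1(\mathcal{W})\cong\mathbb{Z}$ is generated by the meridian, the unique slope on $\mathcal{T}$ that is null‑homologous in $\mathcal{W}$ is the Seifert longitude $\lambda$, and $\mathcal{W}$ carries the nonseparating Seifert surface $F$ with connected boundary $\partial F=\lambda$; the solid torus $\widehat{\mathcal{T}}$ is, in effect, the simplest manifold sharing these features, $\lambda$ being its meridian. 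If $r=\lambda$, the slope‑$r$ curves bound meridian disks of $\widehat{\mathcal{T}}$; one replaces the $\mathcal{W}$‑pieces of the generalized splitting by the product (trivial) splitting of $\widehat{\mathcal{T}}$, inserting meridian disks where $S$ had met $\mathcal{W}$, and a short Euler‑characteristic count shows the genus does not increase. If $r\ne\lambda$, then $p$ is even and the slope‑$r$ curves must occur in homologically cancelling pairs in $\mathcal{W}$, so the $\mathcal{W}$‑side of the splitting carries enough genus to absorb the cost of the (necessarily boundary‑parallel) slope‑$r$ filling surface in $\widehat{\mathcal{T}}$; here one uses $F$ together with an innermost/outermost‑disk analysis of the compression bodies of the induced splitting of $\mathcal{W}$ to rule out $\mathcal{W}$ being ``cheaper'' than the solid torus. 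In both cases one obtains the claimed splitting of $\widehat{\mathcal{T}}$, compatible along $\mathcal{T}$ with the induced generalized splitting of $\mathcal{V}$.

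Amalgamating this splitting of $\widehat{\mathcal{T}}$ with the induced generalized splitting of $\mathcal{V}$ along $\mathcal{T}$ yields a Heegaard surface of $N=\widehat{\mathcal{T}}\cup_{\mathcal{T}}\mathcal{V}$ of genus at most
\[
  g_{\widehat{\mathcal{T}}}+g_{\mathcal{V}}-1\ \le\ g_{\mathcal{W}}+g_{\mathcal{V}}-1\ \le\ g(M),
\]
so $g(N)\le g(M)$. Taking $\mathcal{W}$ to be the exterior of the companion of a satellite knot and $\mathcal{V}$ the exterior of its pattern in the solid torus makes $M$ the satellite exterior and $N$ the pattern exterior, so the inequality becomes $t(\text{satellite})\ge t(\text{pattern})$. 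The genuine obstacle is the claim in the previous paragraph: the phenomenon of tunnel‑number degeneration means that a minimal Heegaard surface of the amalgamation $M$ need not restrict to Heegaard surfaces of $\mathcal{W}$ and $\mathcal{V}$, so one cannot simply ``cut along $\mathcal{T}$''. Making the untelescoping interact correctly with $\mathcal{T}$, and then proving that whatever happens inside $\mathcal{W}$---and whatever the slope $r$---the contribution of $\mathcal{W}$ can always be traded for the trivial contribution of the solid torus without increasing genus, is the technical core; it is precisely here that $\mathcal{W}$ being a knot exterior in a homology sphere, with $H_1(\mathcal{W})=\mathbb{Z}$ and a nonseparating Seifert surface, is used in an essential way.
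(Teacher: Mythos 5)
Your high-level strategy (put an untelescoped minimal splitting in good position with respect to $\mathcal{T}$, then trade the $\mathcal{W}$-side for data in the solid torus compatibly with the $\mathcal{V}$-side) is the right one, but the genus bookkeeping that your argument rests on does not work. The step ``this produces generalized Heegaard splittings of $\mathcal{W}$ and of $\mathcal{V}$ whose amalgamation along $\mathcal{T}$ has genus at most $g(S)$, so $g(M)\ge g_{\mathcal{W}}+g_{\mathcal{V}}-1$'' is a genuine gap. In the only hard case --- when the thick and thin levels must cross $\mathcal{T}$ --- cutting along $\mathcal{T}$ produces surfaces with boundary, and the complementary pieces of $\mathcal{W}$ and $\mathcal{V}$ are not compression bodies; there are no induced (generalized) Heegaard splittings of $\mathcal{W}$ and $\mathcal{V}$, and the ``$-1$ per amalgamation along $\mathcal{T}$'' formula simply does not apply. (When the levels can be made disjoint from $\mathcal{T}$, the theorem is immediate from the amalgamation formula, as in Remark~\ref{Rtorus}; that is the easy case.) Worse, if your inequality were available with $g_{\mathcal{W}}\ge g(\mathcal{W})$ and $g_{\mathcal{V}}\ge g(\mathcal{V})$, it would contradict tunnel-number degeneration ($t(k_1\# k_2)=t(k_1)$ examples, cited in the paper), a phenomenon you yourself invoke in your last paragraph; so this step cannot be repaired as stated, and your final chain $g(N)\le g_{\widehat{\mathcal{T}}}+g_{\mathcal{V}}-1\le g_{\mathcal{W}}+g_{\mathcal{V}}-1\le g(M)$ collapses, since it also applies the closed-surface amalgamation count in a situation where the splitting surfaces meet $\mathcal{T}$.

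Because of this, your ``heart of the proof'' is also not the right statement: producing a splitting of $\widehat{\mathcal{T}}$ with the correct boundary curves and genus at most $g_{\mathcal{W}}$ is not sufficient. What is needed (and what the paper actually does) is to reproduce inside $\widehat{\mathcal{T}}$ the entire relative decomposition that $\Sigma\cap\mathcal{W}$ induces on $\mathcal{W}$ --- a stack of relative compression bodies whose vertical annuli on $\mathcal{T}$ carry matching signs and orders --- so that gluing to $\Sigma\cap\mathcal{V}$ produces honest compression bodies (Lemma~\ref{Lmerge}) and hence a generalized splitting of $N$ of the same total genus; no separate amalgamation within $\mathcal{W}$ or $\mathcal{V}$ ever occurs. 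Your slope dichotomy also does not carry the load you assign it: in the all-separating case the paper must either find genus to exploit (Cases (a), (c) of Section~\ref{Ssep}) or prove by a homological argument using the relative-compression-body structure that the slope is an integer slope (the Claim there), and in the non-separating case (slope $=\lambda$) one has to build non-separating planar surfaces cutting $\widehat{\mathcal{T}}$ into type II blocks with compatible orientations (Section~\ref{Snonsep}). None of this is delivered by the ``homologically cancelling pairs'' and ``boundary-parallel filling surface'' sketch, so the technical core of the theorem is missing from the proposal.
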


As mentioned above, the degree-one map $f\colon \mathcal{W}\to \widehat{\mathcal{T}}$ from $\mathcal{W}$ to a solid torus $\widehat{\mathcal{T}}$ extends to a degree-one map from $M$ to $N$. 
Thus Theorem~\ref{Ttorus} gives some evidence for Conjectures~\ref{Qgenus} and \ref{Qgenus3}. Moreover, Theorem~\ref{Ttorus} has some interesting corollaries on the tunnel numbers of satellite knots.  Let $k$ be a satellite knot in $S^3$. So there is a knotted solid torus $V\subset S^3$ such that $k$ is a nontrivial knot in $V$.  The core curve of the solid torus $V$ is the companion knot for $k$.  If we re-embed $V$ into an unknotted solid torus in $S^3$, then the image of $k\subset V$ becomes a knot $k'$ in $S^3$ and $k'$ is called the pattern knot for $k$. 
Note that we can view $\mathcal{W}=S^3\setminus\Int(V)$ as a knot exterior. If we pinch $S^3\setminus\Int(V)$ into a solid torus, then the resulting manifold is still $S^3$ and $k$ becomes the pattern knot $k'$ after the pinching operation. Thus the following is an immediate corollary of Theorem~\ref{Ttorus}.

\begin{corollary}\label{C1}
The tunnel number of a satellite knot is larger than or equal to the tunnel number of its pattern knot.
\end{corollary}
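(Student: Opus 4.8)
The plan is to reduce the statement about tunnel numbers to the Heegaard genus inequality of Theorem~\ref{Ttorus} via the standard dictionary between tunnel number and Heegaard genus. Recall that for a knot $k\subset S^3$ with exterior $E(k)=S^3\setminus\Int(N(k))$ one has $t(k)=g(E(k))-1$: a system of $n$ unknotting tunnels for $k$ is exactly the data of a genus-$(n+1)$ Heegaard splitting of $E(k)$ into a compression body with negative boundary $\partial N(k)$ glued to a handlebody, and conversely. Hence it suffices to prove $g(E(k))\ge g(E(k'))$, where $k'$ is the pattern knot.

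Next I would exhibit $E(k)$ as an amalgamation matching the hypothesis of Theorem~\ref{Ttorus}. Let $C$ be the companion knot of $k$ and $V=N(C)$ the companion solid torus, so $k$ is a nontrivial knot in $V\subset S^3$. Put $\mathcal{W}=S^3\setminus\Int(V)=E(C)$, the exterior of the knot $C$ in the homology sphere $S^3$; put $\mathcal{T}=\partial V$, a torus; and put $\mathcal{V}=V\setminus\Int(N(k))$, the exterior of $k$ inside $V$. Since $\partial\mathcal{W}=\mathcal{T}$ while $\partial\mathcal{V}=\mathcal{T}\sqcup\partial N(k)$, gluing along $\mathcal{T}$ gives $E(k)=\mathcal{W}\cup_{\mathcal{T}}\mathcal{V}$, precisely the amalgamation in Theorem~\ref{Ttorus}. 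The boundary of a Seifert surface of $C$ in $\mathcal{W}$ is the preferred longitude $\lambda_C$, a well-defined slope on $\mathcal{T}$.

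It remains to identify the manifold $N=\widehat{\mathcal{T}}\cup_{\mathcal{T}}\mathcal{V}$ produced by Theorem~\ref{Ttorus} with the exterior of the pattern knot. By construction $\widehat{\mathcal{T}}$ is a solid torus glued so that its meridian is $\lambda_C$. Because the meridian $\mu_V$ of $V$ and the Seifert longitude $\lambda_C$ meet once on $\mathcal{T}$, the union $\widehat{\mathcal{T}}\cup_{\mathcal{T}}V$ is $S^3$; moreover $V$ sits inside this $S^3$ as an unknotted solid torus whose core has preferred longitude equal to $\lambda_C$ (the preferred longitude is the slope that bounds in the complementary solid torus $\widehat{\mathcal{T}}$, namely its meridian $\lambda_C$). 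This is exactly the preferred-longitude-preserving re-embedding of $V$ into a standard unknotted solid torus that defines the pattern, so the image of $k$ becomes the pattern knot $k'$ and $N=\widehat{\mathcal{T}}\cup_{\mathcal{T}}\mathcal{V}=E(k')$. Theorem~\ref{Ttorus} now gives $g(E(k))=g(M)\ge g(N)=g(E(k'))$, hence $t(k)=g(E(k))-1\ge g(E(k'))-1=t(k')$.

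There is no genuine obstacle; the corollary is immediate once Theorem~\ref{Ttorus} is available. The only points that warrant a line of justification are the identity $t(k)=g(E(k))-1$ and the verification that replacing $\mathcal{W}$ by a solid torus along the Seifert slope of $C$ reproduces exactly the standard re-embedding defining the pattern knot $k'$; both are routine.
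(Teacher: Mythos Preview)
Your proof is correct and follows exactly the approach the paper takes: set $\mathcal{W}$ equal to the companion exterior, $\mathcal{V}$ equal to the exterior of $k$ inside the companion solid torus $V$, observe that replacing $\mathcal{W}$ by a solid torus along the Seifert slope re-embeds $V$ unknottedly in $S^3$ turning $k$ into its pattern $k'$, and then invoke Theorem~\ref{Ttorus} together with $t(k)=g(E(k))-1$. Your write-up simply makes explicit the tunnel-number/Heegaard-genus dictionary and the longitude matching that the paper leaves to the reader.
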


If $k=k_1\# k_2$ is a connected sum of two knots, then a swallow-follow torus is an essential torus in $S^3\setminus N(k)$. So we may view $k$ as a satellite knot with $k_1$ as its companion and $k_2$ its pattern knot.  Thus Corollary~\ref{C1} implies the following theorem of Schirmer \cite{Schi}.

\begin{corollary}[\cite{Schi}]\label{C2} 
Let $k_1$ and $k_2$ be knots in $S^3$, then $t(k_1\# k_2)\ge\max\{t(k_1), t(k_2)\}$, where $t(k)$ denotes the tunnel number of a knot $k$. 
\end{corollary}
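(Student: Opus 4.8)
The plan is to deduce Corollary~\ref{C2} directly from Corollary~\ref{C1} by exhibiting $k_1\#k_2$ as a satellite knot whose pattern is (essentially) $k_2$, and separately $k_1$, so that the tunnel-number inequality applies twice. Recall that the tunnel number $t(k)$ is related to the Heegaard genus of the knot exterior by $g(S^3\setminus N(k))=t(k)+1$, so Corollary~\ref{C1} is literally the statement that pattern knots have no larger exterior Heegaard genus than satellite knots.

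First I would make precise the satellite structure coming from a swallow-follow torus. Let $T_i\subset S^3\setminus N(k_1\#k_2)$ be the swallow-follow torus that bounds a knotted solid torus $V_i$ following $k_i$ and swallowing the other summand; concretely, $V_1$ is a tubular neighborhood of $k_1$ pushed along the connect-sum sphere so that $k_1\#k_2$ lies inside $V_1$ as a knot that is isotopic, inside $V_1$, to the ``$k_2$-part'' wound once longitudinally around $V_1$. Then the companion knot of this satellite structure is $k_1$ (the core of $V_1$), and the pattern knot — obtained by re-embedding $V_1$ as an unknotted solid torus and reading off the resulting knot in $S^3$ — is exactly $k_2$. (Re-embedding $V_1$ unknottedly undoes the $k_1$-knotting, leaving only the $k_2$-summand, since the $k_2$-part sits inside $V_1$ in the pattern of the unknot-connect-$k_2$, i.e.\ $k_2$ itself.) Applying Corollary~\ref{C1} to this satellite structure gives $t(k_1\#k_2)\ge t(k_2)$.

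Next I would run the same argument with the roles of $k_1$ and $k_2$ exchanged: the other swallow-follow torus $T_2$ exhibits $k_1\#k_2$ as a satellite with companion $k_2$ and pattern $k_1$, so Corollary~\ref{C1} yields $t(k_1\#k_2)\ge t(k_1)$. Combining the two inequalities gives $t(k_1\#k_2)\ge\max\{t(k_1),t(k_2)\}$, which is Corollary~\ref{C2}. I would also remark that one must check the swallow-follow torus is genuinely essential (incompressible and not boundary-parallel) in $S^3\setminus N(k_1\#k_2)$ when both $k_i$ are nontrivial, and that when one $k_i$ is trivial the claimed inequality is immediate since $t(k_1\#k_2)=t$ of the nontrivial summand by additivity of tunnel number under connect sum with the unknot — so the satellite hypothesis ``$k$ is a nontrivial knot in $V$'' needed in Corollary~\ref{C1} is satisfied in the remaining cases. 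The only genuinely substantive point is the identification of the pattern knot of the swallow-follow satellite structure with the opposite summand; this is a classical fact but deserves an explicit sentence, and it is the one step where a careless reader might slip. Everything else is a bookkeeping reduction to Corollary~\ref{C1}.
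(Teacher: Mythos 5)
Your proposal is correct and is essentially the paper's own argument: the paper also deduces Corollary~\ref{C2} from Corollary~\ref{C1} by using a swallow-follow torus to exhibit $k_1\# k_2$ as a satellite with companion $k_1$ and pattern $k_2$, then invoking symmetry. Your extra remarks on essentiality of the torus and on trivial summands only make explicit what the paper leaves implicit.
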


By a theorem in \cite{L18}, there are knots $k_1$ and $k_2$ with $t(k_1\# k_2)=t(k_1)$, so 
the inequalities in Corollary~\ref{C1} and Corollary~\ref{C2} are sharp.

\begin{acknowledgments}
I would like to thank Trent Schirmer for helpful conversations.  Some of the arguments in this paper are repackaging of arguments in \cite{Schi}. I would also like to thank the referees for many suggestions for improving the exposition of the paper.
\end{acknowledgments}

\section{Generalized Heegaard splittings}\label{SGHS}

\begin{notation}
For any topological space $X$, we use $\overline{X}$, $\Int(X)$, $|X|$, and $N(X)$ to denote the closure, interior, number of components, and a small neighborhood of $X$ respectively.  If $X$ is a surface, $g(X)$ denotes the genus of $X$, and if $X$ is a 3-manifold, $g(X)$ denotes the Heegaard genus of $X$.  Moreover, the genus of a disconnected surface is defined to be the sum of the genera of its components.
Throughout the paper, we use $I$ to denote the unit interval $[0,1]$.
\end{notation}

Since Heegaard genus is additive under connected sum \cite{H}, we only need to consider the case that $\mathcal{W}$ and $\mathcal{V}$ in Theorem~\ref{Ttorus} are irreducible.  If $\mathcal{W}$ or $\mathcal{V}$ is a solid torus, then  Theorem~\ref{Ttorus} holds trivially.  So we suppose neither $\mathcal{W}$ nor $\mathcal{V}$ is a solid torus.  As $\mathcal{W}$ and $\mathcal{V}$ are irreducible, this means that the torus $\mathcal{T}$ is incompressible, which implies that $M$ is also irreducible.

A Heegaard splitting $M=H_1\cup H_2$ can be viewed as a handle decomposition of $M$ where $H_1$ is the union of $0$- and $1$-handles and $H_2$ is the union of $2$- and $3$-handles.  
Scharlemann and Thompson \cite{ST} observed that one can rearrange these handles and obtain a sequence of nice surfaces.  This process is called untelescoping.  These surfaces give rise to a decomposition of $M$ into a collection of submanifolds $\mathcal{N}_0,\dots, \mathcal{N}_m$ along surfaces $\mathcal{F}_1,\dots,\mathcal{F}_m$, where $\mathcal{F}_i$ may not be connected.  Each $\mathcal{N}_i$ has a Heegaard surface $\mathcal{P}_i$ that decomposes $\mathcal{N}_i$ into two compression bodies $\mathcal{A}_i$ and $\mathcal{B}_i$, see Figure~\ref{Funtel} for a schematic picture. 

The 1-handles of $H_1$ are rearranged as the 1-handles of the $\mathcal{A}_i$'s and the 2-handles of $H_2$ are rearranged as the 2-handles of the $\mathcal{B}_i$'s.  This decomposition is called a {\bf generalized Heegaard splitting}.  Note that the Heegaard splittings of the $\mathcal{N}_i$'s can be amalgamated along the $\mathcal{F}_i$'s into the original Heegaard splitting $M=H_1\cup H_2$, see \cite{Sch}.  The genus of the generalized Heegaard splitting is defined to be the genus of splitting $M=H_1\cup H_2$.

\begin{figure}[h]
\begin{center}
\begin{overpic}[width=3in]{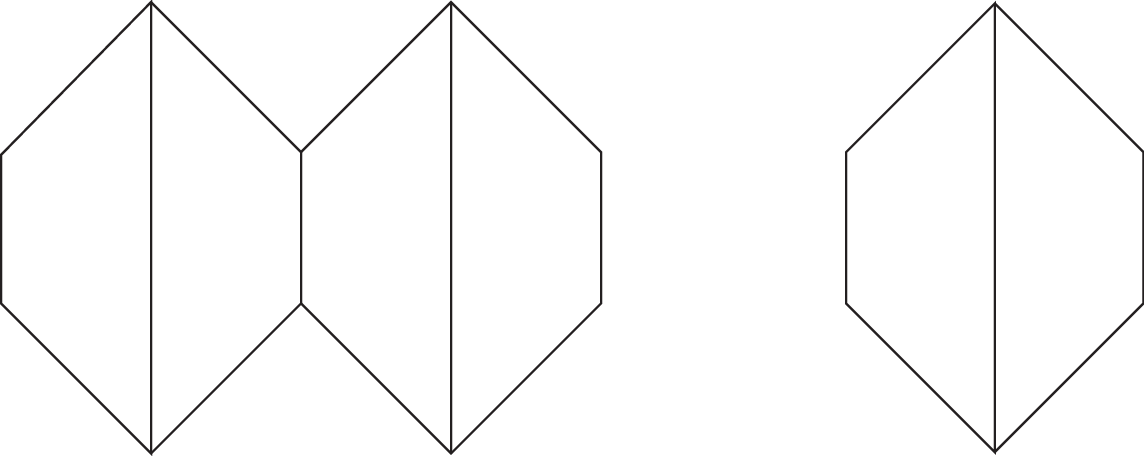}
\put(10,-5){$\mathcal{N}_0$}
\put(58,20){$\dots\dots$}
\put(37,-5){$\mathcal{N}_1$}
\put(85,-5){$\mathcal{N}_m$}
\put(24,8){$\mathcal{F}_1$}
\put(51,8){$\mathcal{F}_2$}
\put(71,8){$\mathcal{F}_m$}
\put(14,15){$\mathcal{P}_0$}
\put(40,15){$\mathcal{P}_1$}
\put(87,15){$\mathcal{P}_m$}
\put(4,25){$\mathcal{A}_0$}
\put(32,25){$\mathcal{A}_1$}
\put(77,25){$\mathcal{A}_m$}
\put(17,25){$\mathcal{B}_0$}
\put(43,25){$\mathcal{B}_1$}
\put(90,25){$\mathcal{B}_m$}
\includegraphics[width=3in]{untel.eps}
\end{overpic}
\vspace{10pt}
\caption{The diagram of a generalized Heegaard splitting}\label{Funtel}
\end{center}
\end{figure}

Scharlemann and Thompson \cite{ST} proved that if the Heegaard splitting $M=H_1\cup H_2$ is unstabilized, then one can rearrange the handles so that each $\mathcal{F}_i$ is incompressible in $M$ and each $\mathcal{P}_i$ is a strongly irreducible Heegaard surface of $\mathcal{N}_i$.  See \cite{CG, He, L8, L4} for the definition and some properties of strongly irreducible Heegaard splittings.

Suppose $M=H_1\cup H_2$ is a minimal genus Heegaard splitting of $M$. By \cite{ST}, we may assume each $\mathcal{F}_i$ is incompressible and each $\mathcal{P}_i$ is strongly irreducible.  Let $\Sigma$ be the union of all the surfaces $\mathcal{F}_i$'s and $\mathcal{P}_i$'s in the untelescoping. 
Suppose $M=\mathcal{W}\cup_\mathcal{T} \mathcal{V}$ and $\mathcal{T}$ is an incompressible torus.  A theorem of 
Bachman-Schleimer-Sedgwick \cite[Lemma 3.3 and Remark 3.4]{BSS} (also see \cite{L4, L14}) says that one can 
isotope $\Sigma$ to intersect $\mathcal{T}$ nicely. 

\begin{lemma}[Bachman-Schleimer-Sedgwick \cite{BSS}]\label{LBSS}
Let $M=\mathcal{W}\cup_\mathcal{T} \mathcal{V}$ and suppose $\mathcal{T}$ is an incompressible torus. Let $\Sigma$ be the a collection of incompressible and strongly irreducible surfaces in the decomposition above.  Then $\Sigma$ can be isotoped so that either
\begin{enumerate}
\item $\mathcal{T}$ is a component of some $\mathcal{F}_i$, or
\item $\Sigma$ transversely intersects $\mathcal{T}$, and each component of $\Sigma\cap \mathcal{W}$ and $\Sigma\cap \mathcal{V}$ is an essential or a strongly irreducible surface in $\mathcal{W}$ and $\mathcal{V}$ respectively.
\end{enumerate}
\end{lemma}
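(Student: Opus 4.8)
The plan is to recall the argument of Bachman--Schleimer--Sedgwick: first put the incompressible surfaces of $\Sigma$ into good position with respect to $\mathcal{T}$, and then, working inside one submanifold $\mathcal{N}_i$ at a time, put each strongly irreducible surface $\mathcal{P}_i$ into good position with respect to $\mathcal{T}\cap\mathcal{N}_i$. Write $\Sigma=(\bigcup_i\mathcal{F}_i)\cup(\bigcup_i\mathcal{P}_i)$, where each $\mathcal{F}_i$ is incompressible in $M$ and each $\mathcal{P}_i$ is a strongly irreducible Heegaard surface of $\mathcal{N}_i$, and recall that $M$, $\mathcal{W}$ and $\mathcal{V}$ are irreducible and $\mathcal{T}$ is incompressible.

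First I would treat the incompressible part. Because $\mathcal{T}$ and every $\mathcal{F}_i$ are incompressible and $M$ is irreducible, a standard innermost-disk argument isotopes $\Sigma$ so that $\bigcup_i\mathcal{F}_i$ meets $\mathcal{T}$ transversely in curves essential both in $\mathcal{T}$ and in the $\mathcal{F}_i$ containing them: an inessential intersection circle bounds a disk in $\mathcal{T}$ and, by incompressibility, a disk in some $\mathcal{F}_i$; these cobound a ball since $M$ is irreducible; so the circle is removed by an isotopy, and one iterates. If in the process a component of some $\mathcal{F}_i$ becomes parallel to $\mathcal{T}$, the product region between them lets me isotope that component onto $\mathcal{T}$, which is conclusion~(1). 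Otherwise I am aiming for conclusion~(2), and it follows that $\mathcal{T}$ cut along $\bigcup_i\mathcal{F}_i$ is a union of incompressible surfaces, each properly embedded in some $\mathcal{N}_i$, while each component of $(\bigcup_i\mathcal{F}_i)\cap\mathcal{W}$ and of $(\bigcup_i\mathcal{F}_i)\cap\mathcal{V}$ is incompressible in $\mathcal{W}$, respectively $\mathcal{V}$ (and, after discarding any boundary-parallel piece by a further reduction of $|\mathcal{T}\cap\bigcup_i\mathcal{F}_i|$, essential).

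Next, I would fix $i$, set $F=\mathcal{T}\cap\mathcal{N}_i$ --- an incompressible surface properly embedded in $\mathcal{N}_i$ by the previous step --- and invoke the standard lemma that a strongly irreducible Heegaard surface can be put in good position with respect to an incompressible surface: after an isotopy of $\mathcal{P}_i$ supported in $\Int(\mathcal{N}_i)$, hence leaving $\bigcup_j\mathcal{F}_j$ and the other $\mathcal{P}_j$ untouched, every component of $\mathcal{P}_i\cap F$ is essential in $F$, and every component of $\mathcal{P}_i$ cut along $F$ is incompressible or strongly irreducible in the corresponding component of $\mathcal{N}_i$ cut along $F$. Running this for every $i$ and collecting the pieces, every component of $\Sigma\cap\mathcal{W}$ and of $\Sigma\cap\mathcal{V}$ is then a piece of some $\mathcal{F}_i$ (essential) or a piece of some $\mathcal{P}_i$ (essential or strongly irreducible), which is conclusion~(2).

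The main obstacle is this last ingredient: that the pieces of a strongly irreducible surface cut along an incompressible surface are again incompressible or strongly irreducible. The point is that, once $|\mathcal{P}_i\cap F|$ is minimized, any compression of a piece of $\mathcal{P}_i$ is realized by a disk lying entirely on one side of $\mathcal{P}_i$, and strong irreducibility forbids disjoint compressing disks on the two sides of $\mathcal{P}_i$; this forces all but at most one of the pieces to be incompressible, the exceptional one being strongly irreducible. Establishing this rigorously requires a sweepout/thin-position analysis of the splitting $\mathcal{P}_i=\mathcal{A}_i\cup\mathcal{B}_i$ relative to $F$, which is precisely \cite[Lemma~3.3 and Remark~3.4]{BSS} (see also \cite{L4, L14}); rather than reproduce it I would simply quote it. The remaining steps --- the innermost-disk manipulations and the bookkeeping of which pieces lie in $\mathcal{W}$ and which in $\mathcal{V}$ --- are routine.
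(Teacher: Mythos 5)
Your proposal is consistent with the paper's treatment: the paper gives no independent proof of this lemma, attributing it entirely to Bachman--Schleimer--Sedgwick \cite[Lemma 3.3 and Remark 3.4]{BSS} (with \cite{L4, L14} as further references), and your outline defers the one genuinely hard step --- that the pieces of a strongly irreducible Heegaard surface cut along an incompressible surface are essential or strongly irreducible --- to exactly that citation. The surrounding steps you sketch (innermost-disk isotopies for the incompressible $\mathcal{F}_i$'s, isotoping a $\mathcal{T}$-parallel component onto $\mathcal{T}$ to get conclusion (1), and pushing compressing disks off incompressible frontiers to pass from the pieces of $\mathcal{N}_i$ to $\mathcal{W}$ and $\mathcal{V}$) are standard and correct, so the attempt is essentially the same as the paper's.
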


\begin{remark}\label{Rtorus}
If $\mathcal{T}$ is a component of $\Sigma$, then the minimal genus Heegaard splitting of $M$ is an amalgamation of Heegaard splittings of $\mathcal{W}$ and $\mathcal{V}$, see \cite{Sch}. In particular, $g(M)=g(\mathcal{W})+g(\mathcal{V})-g(\mathcal{T})=g(\mathcal{W})+g(\mathcal{V})-1\ge g(\mathcal{V})$, see \cite{La2, L4, L14}.  Since $N=\widehat{\mathcal{T}}\cup_{\mathcal{T}}\mathcal{V}$ is obtained by a Dehn filling on $\mathcal{V}$, we have $g(\mathcal{V})\ge g(N)$. Hence $g(M)\ge g(N)$ and Theorem~\ref{Ttorus} holds.  
Thus, to prove Theorem~\ref{Ttorus}, we only need to consider the case that $\Sigma$ transversely intersects $\mathcal{T}$, i.e.~the second conclusion of Lemma~\ref{LBSS}.
\end{remark}

The general strategy for the proof of Theorem~\ref{Ttorus} is to build a sequence of surfaces in the solid torus $\widehat{\mathcal{T}}$ according to $\Sigma\cap\mathcal{W}$, such that these surfaces
 merge with $\Sigma\cap\mathcal{V}$ and yield a generalized Heegaard splitting of $N$ with the same genus.

By Lemma~\ref{LBSS} and Remark~\ref{Rtorus}, we may assume that the intersection of the torus $\mathcal{T}$ with each compression body in the generalized Heegaard splitting consists of a collection of incompressible annuli.  These annuli divide each compression body into submanifolds with certain properties.  We describe these properties in the next section.

\section{Marked handlebodies and relative compression bodies}\label{Srelative compression body}

Let $\Sigma$ be the collection of surfaces in Lemma~\ref{LBSS}.  Our goal is to construct a collection of surfaces in the solid torus $\widehat{\mathcal{T}}$ so that these surfaces merge with $\Sigma\cap\mathcal{V}$ into a generalized Heeggard surface of $N=\widehat{\mathcal{T}}\cup_\mathcal{T} \mathcal{V}$. 
These surfaces in the solid torus $\widehat{\mathcal{T}}$ are mostly constructed by connecting $\partial$-parallel annuli using trivial tubes. 
We first describe such surfaces and the submanifolds bounded by these surfaces.

Before we proceed, we describe an operation on surfaces that we use many times throughout the paper. 
Let $S_1$ and $S_2$ be two surfaces in a 3-manifold and let $\alpha$ be an arc connecting $S_1$ to $S_2$.  
Let $H=D^2\times I$ be a 1-handle along $\alpha$ with $\alpha=\{x\}\times I$, and suppose $H\cap (S_1\cup S_2)=D^2\times\partial I$.  
Let $S'$ be the surface obtained by first removing the two disks $H\cap (S_1\cup S_2)$ from $S_1$ and $S_2$ and then connecting the resulting surfaces by the annulus $(\partial D^2)\times I$.  If $S_1\ne S_2$, we say that $S'$ is obtained by connecting $S_1$ and $S_2$ via a tube along $\alpha$ (topologically $S'\cong S_1\# S_2$).  If $S_1=S_2$ and $\alpha$ is a trivial arc (i.e.~parallel to an arc in $S_1$), then we say that $S'$ is obtained by adding a trivial tube.

\vspace{10pt}

\noindent
\textbf{Marked handlebodies}:

\vspace{10pt}

We call a genus-$g$ handlebody $X$ a \textbf{marked handlebody} if 
$\partial X$ contains $m$ ($m\le g$) disjoint essential annuli $A_1,\dots, A_m$ and 
$X$ contains $m$ disjoint compressing disks $\tau_1,\dots, \tau_m$ such that (1) $\tau_i\cap A_j=\emptyset$ if $i\ne j$, and (2) $\tau_i\cap A_i$ is a single essential arc of $A_i$ for each $i$. See Figure~\ref{Fex}(a) for a picture.  These annuli $A_i$ are called marked annuli in $\partial X$.

\begin{figure}[h]
	\begin{center}
		\begin{overpic}[width=5in]{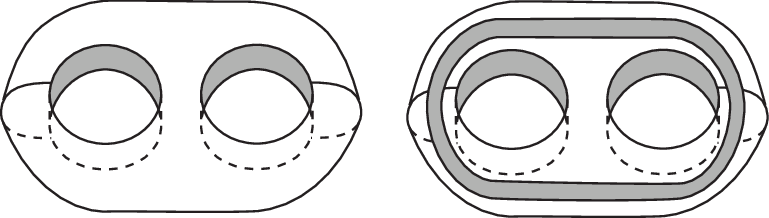}
			\put(23,-4){(a)}
			\put(75,-4){(b)}
			\put(14,20){$A_1$}				    \put(34,20){$A_2$}
			\put(65,19.5){$A_1$}
            \put(85,19.5){$A_2$}
            \put(75,24.2){\footnotesize{$A_3$}}
            \put(3,13){$\tau_1$}
            \put(43,13){$\tau_2$}
            \put(53.3,12.5){$\tau_1$}
            \put(97,13){$\tau_2$}
		\end{overpic}
		\vspace{7pt}
		\caption{Examples of a marked handlebody and a relative compression body}\label{Fex}
	\end{center}
\end{figure}

By connecting two parallel copies of $\tau_i$ via a band around $A_i$, 
we can construct a separating compressing disk in $X$ which cuts off a solid torus $T_i$ with
$A_i\subset \partial T_i$, see Figure~\ref{Fmono} for a picture.  
So we can give an equivalent but slightly different description of $X$: 
Start with a collection of solid tori $T_1,\dots, T_m$ and suppose $T_i=\tau_i\times S^1$, where $\tau_i$ is a bigon disk.  Let $a_i$ and $a_i'$ be the two boundary edges of the bigon $\tau_i$, and let $A_i=a_i\times S^1$ and $A_i'=a_i'\times S^1$ be the pair of annuli in $\partial T_i=\partial\tau_i\times S^1$.  Let $B$ be a 3-ball.  We first connect each $T_i$ to $B$ using a 1-handle $H_i$ ($i=1,\dots,m$) and require that each 1-handle $H_i$ ($i=1,\dots,m$) is attached to the annulus $A_i'$. Then we add $p$ 1-handles ($p=g-m$) $H_1',\dots, H_p'$ to $B$. The resulting handlebody is $X$, and  
the set of annuli $A_1,\dots A_m$ are our marked annuli on $\partial X$.  
We call $B$ in this construction the {\it central $3$-ball} of $X$.

The boundary of $X$ has two parts: (1) the vertical boundary $\partial_vX=\bigcup_{i=1}^m A_i$ and (2) the horizontal boundary $\partial_hX=\overline{\partial X\setminus \partial_vX}$. 

To put this in a broader context, one should view $X$ as a submanifold of the solid torus $\widehat{\mathcal{T}}$, $\partial_hX$ as a surface properly embedded in $\widehat{\mathcal{T}}$, and $\partial_vX$ as a collection of annuli in the torus $\mathcal{T}$.

\begin{figure}[h]
	\begin{center}
		\begin{overpic}[width=4in]{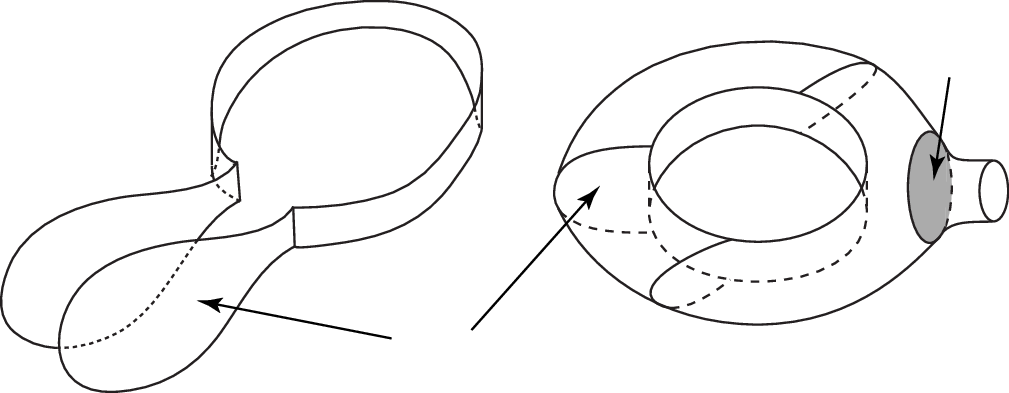}
			\put(26,-5){(a)}
			\put(73,-5){(b)}
			\put(15,5){$\tau_i$}
			\put(39,3){dual disk}
			\put(88,32){compressing disk}
		\end{overpic}
		\vspace{7pt}
		\caption{Obtain a separating compressing disk using two copies of a disk}\label{Fmono}
	\end{center}
\end{figure}

Next, we construct more surfaces in a marked handlebody $X$.

\vspace{10pt}

\noindent
\textbf{Cross-section disks, suspension surfaces, and standard surfaces}:  

\vspace{10pt}

Let $X$ be a marked handlebody with $T_i=\tau_i\times S^1$, $A_i$, $A_i'$, $B$ and $H_i$  ($i=1,\dots, m$) as above.  A {\bf cross-section disk} is a properly embedded disk in $X$ that cuts through each $H_i$ and each $T_i$, as shown in Figure~\ref{Ftype1}(a).  More precisely, a cross-section disk can be described as follows: we view the central 3-ball $B$ in the construction as a product $D\times I$, where $D$ is a disk, and view each 1-handle $H_i$ as a product $\Delta_i\times I$ with $\Delta_i\times\{0\}\subset A_i'\subset\partial T_i$ and $\Delta_i\times\{1\}\subset (\partial D)\times I\subset\partial B$.  
First take a disk $E_0=D\times\{t\}\subset D\times I=B$ and suppose $\partial E_0$ intersects each $\Delta_i\times\{1\}$ in a single arc $\delta_i\times\{1\}$, where $\delta_i$ is an arc in $\Delta_i$.  
Let $E_1$ be the union of $E_0$ and all the quadrilateral disks $\delta_i\times I$ in $H_i$ ($i=1,\dots,m$). 
In each solid torus $T_i=\tau_i\times S^1$, we take a meridional disk $\tau_i\times\{x\}$ and we suppose the intersection of the meridional disk $\tau_i\times\{x\}$ with the 1-handle $H_i$ is the arc $\delta_i\times\{0\}$.  Let $E$ be the union of $E_1$ and all the disks $\tau_i\times\{x\}$ ($i=1,\dots, m$).  So $E$ is a disk properly embedded in $X$ and we call $E$ a cross-section disk of $X$, see the shaded region in Figure~\ref{Ftype1}(a) for a picture.  

A cross-section disk $E$ has 3 parts: (1) $E_0=D\times\{t\}$ in the central 3-ball $B$, (2) the rectangles $\delta_i\times I$ in the 1-handles $H_i$ and (3) the bigons $\tau_i\times\{x\}$, $i=1,\dots,m$.  Note that by choosing $E_0=D\times\{t\}$ using different $t\in I$, we can construct a collection of parallel cross-section disks.

\begin{figure}[h]
	\begin{center}
		\begin{overpic}[width=4in]{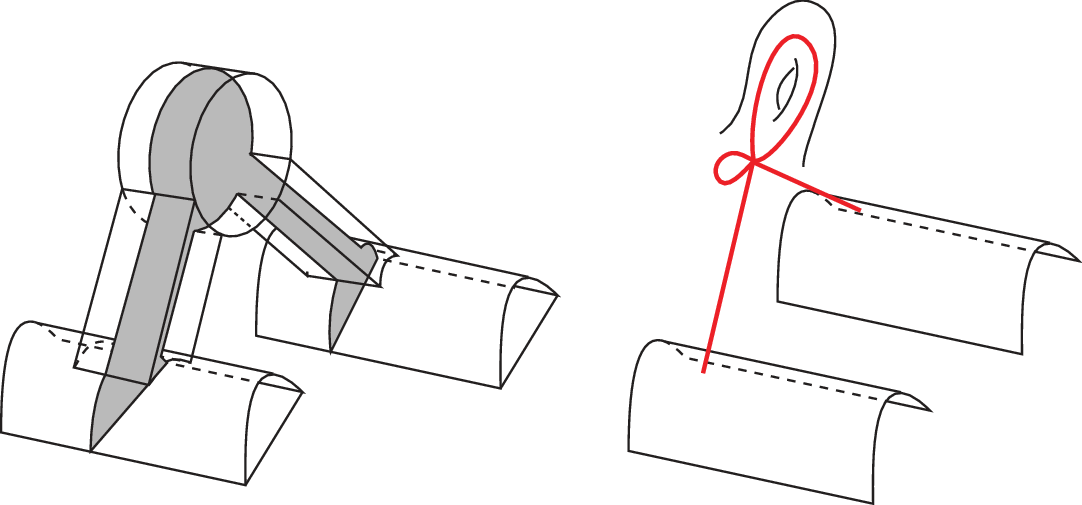}
			\put(20,-5){(a)}
			\put(78,-5){(b)}
			\put(27,37){$B$}
			\put(5,22){$H_i$}
			\put(19,8){$T_i$}
			\put(77,40){1-handle $H_j'$}
			\put(63,22){$G'$}
			\put(80,22){$\Gamma_i$}
		\end{overpic}
		\vspace{10pt}
		\caption{Cross-section disk and the construciton of a standard surface}\label{Ftype1}
	\end{center}
\end{figure}

Next, we use the cross-section disk $E$ to describe a type of properly embedded planar surfaces in $X$ which we call suspension surfaces.  
Let $\Gamma_1,\dots,\Gamma_k$ be a collection of $\partial$-parallel annuli in $X$ with each $\partial\Gamma_i\subset \partial_vX$.  Let $\widehat{\Gamma}_i$ be the solid torus bounded by $\Gamma_i$ and the
subannulus of $\partial_vX$ between the two curves of $\partial\Gamma_i$.  
We say that these annuli $\Gamma_1,\dots,\Gamma_k$ are {\it non-nested} if these solid tori $\widehat{\Gamma}_1, \dots, \widehat{\Gamma}_k$ are non-nested/disjoint. 
Suppose these $\partial$-parallel annuli $\Gamma_1,\dots,\Gamma_k$ are non-nested. 
Let $E$ be a cross-section disk described above.  We may assume $E\cap \widehat{\Gamma}_i$ is a meridional disk of the solid torus $\widehat{\Gamma}_i$. 
Let $O$ be a point in the subdisk $E_0=D\times\{t\}$ of the cross-section disk $E$ and let $S$ be a 2-sphere around $O$. 
A  {\bf suspension surface} (over $\Gamma_1,\dots,\Gamma_k$ and based at the cross-section disk $E$) is the surface obtained by connecting each annulus $\Gamma_i$ to $S$  via a small tube along an arc in the cross-section disk.  We will call $S$ the {\bf central sphere} of the suspension surface.   
Alternatively, we can first connect each disk $E\cap\widehat{\Gamma}_i$ to $O$ using an arc in the cross-section disk $E$ and let $G$ be the union of these arcs.  
Let $N_\Gamma$ be a small neighborhood of $G\cup(\bigcup_{i=1}^k\widehat{\Gamma}_i)$. 
So $N_\Gamma$ is a marked handlebody whose marked annuli are the subannuli of $\partial_vX$ bounded by $\partial\Gamma_1,\dots, \partial\Gamma_k$.  
The frontier surface of $N_\Gamma$ in $X$ is a suspension surface over $\Gamma_1,\dots,\Gamma_k$.  In particular, a suspension surface is a planar surface properly embedded in $X$ with boundary in $\partial_vX$.  Moreover, we call $N_\Gamma$ the marked handlebody bounded by the suspension surface.
Note that topologically $X\setminus N_\Gamma$ is also a handlebody.

Recall that the marked handlebody $X$ in the definition has $p$ extra 1-handles $H_1',\dots, H_p'$ attached to the central 3-ball $B$, which makes $\partial_hX$ a genus-$p$ surface.  Let $F$ be a suspension surface constructed above.  By the construction, $F$ is a planar surface.  Next, we add some tubes to $F$ and change $F$ into a nonplanar surface. Each tube is either a trivial tube or a tube going through some 1-handle $H_j'$ and we require that each 1-handle $H_j'$ contains at most one tube.  
First, view the genus-$p$ handlebody $(\bigcup_{i=1}^pH_i')\cup B$ as a product $P\times I$, where $P$ is the planar surface obtained by adding $p$ 2-dimensional 1-handles to the disk $D$. 
Then extend the cross-section disk $E$ to a planar surface $E'$ by extending $E_0=D\times\{t\}$ to $P\times\{t\}$.   
Now extend the graph $G\subset E$ (in the construction above) to a graph $G'\subset E'$ by first adding a collection of trivial loops to the point $O$ in $E_0=D\times\{t\}$ and then adding $q$ loops ($q\le p$) in $P\times\{t\}$ going through the 1-handles $H_1',\dots, H_p'$, such that each $H_j'$ contains at most one arc, see Figure~\ref{Ftype1}(b) for a picture.  Let $N_\Gamma'$ be a small neighborhood of $G'\cup(\bigcup_{i=1}^k \widehat{\Gamma}_i)$.  Clearly, $N_\Gamma'$ is a marked handlebody.  We call the frontier surface of $N_\Gamma'$ in $X$ a \textbf{standard surface} in $X$ and call  $N_\Gamma'$ the marked handlebody bounded by the standard surface.  
Note that $X\setminus N_\Gamma'$ is also a topological handlebody.

\vspace{10pt}

\noindent
\textbf{Relative compression bodies}:

\vspace{10pt}

The collection of surfaces $\Sigma$ in Lemma~\ref{LBSS} divide $M=\mathcal{W}\cup_\mathcal{T} \mathcal{V}$ into a collection of compression bodies. The intersection of the torus $\mathcal{T}$ with each compression body is a collection of annuli and these annuli divide each compression body into a collection of submanifolds.  
These submanifolds have similar properties to compression bodies, plus some additional structures.

A compression body is a manifold obtained by adding 2- and 3-handles on the same side of $F\times I$, where $F$ is a closed and orientable surface.  Next, we allow the surface $F$ to have boundary and add some additional structures.

\begin{definition}\label{Drelative compression body}
Let $F$ be a compact connected and orientable surface.  Let $X$ be the 3-manifold obtained by adding 2- and 3-handles to $F\times I$ along $F\times\{0\}$.   Denote $(\partial F)\times I$ by $\partial_v^0 X$.  
There are two sets of disjoint annuli in $F\times\{1\}$, denoted by $\partial_v^+ X$ and $\partial_v^-X$  (it is possible that $\partial_v^\pm X=\emptyset$), with the following properties:  
\begin{enumerate}
  \item For each annulus $A$ in $\partial_v^-X$, there is a compressing disk $D_A$ such that $\partial D_A\subset F\times\{1\}$, $A\cap\partial D_A$ in a single essential arc in $A$, and $\partial D_A$ does not intersect any other annulus in $\partial_v^-X$.  We call $D_A$ a \textbf{dual disk} of $A$. 
  \item Each annulus $A$ in $\partial_v^\pm X$ is assigned a numerical order, denoted by $o(A)$.  A dual disk $D_A$ of an annulus $A$ in $\partial_v^-X$ may intersect annuli in $\partial_v^+X$ but it  intersects only annuli with order smaller than $o(A)$.  
\end{enumerate}
We call the manifold $X$ described above a \textbf{relative compression body}. Let $\partial_v X=\partial_v^0X\cup\partial_v^+X\cup\partial_v^-X$ and we call $\partial_vX$ the \textbf{vertical boundary} of $X$.   
Let $\partial_h^+X$ be the closure of $F\times\{1\}\setminus(\partial_v^+X\cup\partial_v^-X)$ and let $\partial_h^-X$ be the closure of $\partial X\setminus(F\times\{1\}\cup \partial_v^0X)$.  
Denote $\partial_hX=\partial_h^+X\cup\partial_h^-X$.  We call $\partial_hX$ the \textbf{horizontal boundary} of $X$.  Clearly $\partial X=\partial_hX\cup\partial_vX$.  Denote $\partial^+X=\partial_h^+X\cup\partial_v^+X$ and call $\partial^+X$  the \textbf{positive boundary} of $X$.  Similarly, denote $\partial^-X=\partial_h^-X\cup\partial_v^-X$ and call $\partial^-X$ the \textbf{negative boundary} of $X$.  Thus $\partial X=\partial^+X\cup\partial^-X\cup\partial_v^0X$. 
\end{definition} 
Figure~\ref{Fex}(b) is a picture of a relative compression body, where $A_1$ and $A_2$ are annuli in $\partial_v^-X$, $\tau_i$ is the dual disk of $A_i$ ($i=1,2$), $A_3$ is an annulus in $\partial_v^+X$, and $o(A_3)<o(A_i)$ for both $i=1,2$. In Figure~\ref{Fex}(b), $\partial_v^0X=\emptyset$ and $\partial_h^-X=\emptyset$.

To put this in a broader context, one should think of $\partial_hX$ as $\Sigma\cap \mathcal{W}$, where $\Sigma$ and $\mathcal{W}$ are as in Lemma~\ref{LBSS}, and view
 $\partial_v X$ as annuli in the torus $\mathcal{T}$. Moreover, one should view $\partial_h^+X$ and $\partial_h^-X$ as subsurfaces of the plus and minus boundary of a compression body determined by $\Sigma$.

\begin{remark}\label{Rproperty}
Below are some basic properties of a relative compression body. These properties are similar to the properties of a compression body.
\begin{enumerate}
  \item Let $\Delta$ be a compressing disk for $\partial_h^+X$. By cutting and pasting, we can construct a set of dual disks for annuli in $\partial_v^-X$ that are disjoint from $\Delta$. This means that if we compress $X$ along $\Delta$, the resulting manifold is still a relative compression body.
 \item Although we do not require all the dual disks to be disjoint in Definition~\ref{Drelative compression body}, similar to Property (1), we can perform cutting and pasting on the dual disks and obtain a set of disjoint dual disks for the annuli in $\partial_v^-X$.
 
 \item  In Definition~\ref{Drelative compression body}, each annulus $A$ in $\partial_v^-X$ has a dual disk $D_A$.  As shown in Figure~\ref{Fmono}(a), by connecting two parallel copies of $D_A$ using a band around $A$, we get a compressing disk $D$ of $X$ with $\partial D\subset\partial^+X$, such that $D$ cuts off a collar neighborhood of $A$ in $X$, see the shaded disk of Figure~\ref{Fmono}(b) for a picture of $D$.   Thus, similar to a compression body, if we maximally compress the positive boundary $\partial^+X$, the resulting manifold is a product neighborhood of the negative boundary $\partial^-X=\partial_h^-X\cup\partial_v^-X$.  

 \item  As a converse to Property (3), there is a core graph $G$ properly embedded in $X$ connecting all the components of $\partial^-X$ (similar to a core graph of a handlebody or compression body), such that $X\setminus N(G)$ is a product neighborhood of $\partial^+X$.  Moreover, for any nontrivial subgraph $G'$ of a core graph, $X\setminus N(G')$ is a relative compression body. 
  \item If we add a 1-handle to $\partial_h^+X$ or add a 2-handle to $\partial_h^-X$, the resulting manifold is still a relative compression body.  Moreover, if we add a 2-handle along any component of $\partial_v^-X$, the resulting manifold is still a relative compression body. 
 
  \item Let $A$ be a component of $\partial_v^0 X$. If one adds a 2-handle to $X$ along $A$, then the resulting manifold is still a relative compression body.  Conversely, since maximally compressing $\partial^+X$ results in a product neighborhood of $\partial^-X$, for each component of $\partial_h^-X$, there is a vertical arc $\beta$ (of the product structure above) connecting this component of $\partial_h^-X$ to $\partial_h^+X$, such that  $X\setminus N(\beta)$ is a relative compression body and the annulus around $\beta$ is a component of $\partial_v^0(X\setminus N(\beta))$.
 
 \item Let $\gamma$ be a properly embedded $\partial$-parallel arc in $X$.  Suppose $\partial\gamma\subset\partial_h^+ X$ and $\gamma$ is parallel to an arc $\beta$ in $\partial^+X$. So $\gamma\cup\beta$ bounds a disk.  Let $\mathcal{N}(\gamma)=\Delta\times I$ be a tubular neighborhood of $\gamma$ in $X$ and denote $A_\gamma=(\partial\Delta)\times I$.  Then the closure of $X\setminus \mathcal{N}(\gamma)$ is a relative compression body, if we set $A_\gamma$  as a component of $\partial_v^-(X\setminus \mathcal{N}(\gamma))$ and assign $A_\gamma$ an  order higher than the order of any other annulus (the disk bounded by $\gamma\cup\beta$ determines a dual disk for $A_\gamma$).
\end{enumerate}  
\end{remark}

\begin{lemma}\label{LBM1}
	Let $X$ be a marked handlebody.  Assign a plus sign to $\partial_hX$ and an arbitrary $\pm$-sign to each annulus in $\partial_vX$. Assign an arbitrary order to each annulus in $\partial_vX$.  Then $X$ is a relative compression body.
\end{lemma}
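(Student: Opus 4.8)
The plan is to verify directly that a marked handlebody $X$, with the declared signs and orders, meets every requirement in Definition~\ref{Drelative compression body}. Recall the ``solid tori plus central $3$-ball'' description of $X$: we have solid tori $T_1,\dots,T_m$ with $T_i=\tau_i\times S^1$, annuli $A_i=a_i\times S^1$ and $A_i'=a_i'\times S^1$ in $\partial T_i$, a central $3$-ball $B$, 1-handles $H_i$ attaching $T_i$ to $B$ along $A_i'$, and $p=g-m$ further 1-handles $H_1',\dots,H_p'$ on $B$. The first task is to exhibit the product structure $F\times I$ underlying $X$: I would take $F$ to be $\partial_h^+X$ together with the marked annuli, i.e.\ essentially the boundary surface with the ``outer'' annuli retained, and realize the 2- and 3-handles attached along $F\times\{0\}$ as those dual to the cocores of the 1-handles $H_i'$ and the 1-handles $H_i$. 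Concretely, cutting $X$ along the compressing disks $\tau_i$ together with the cocores of the $H_i'$ yields a $3$-ball, which I would reassemble as $F\times I$ with the handles reattached on one side; the marked annuli $A_i$ are carried along into $F\times\{1\}$. With this product structure in hand, $\partial_vX=\bigcup A_i$ sits in $F\times\{1\}$ and is split according to the assigned $\pm$-signs into $\partial_v^+X$ and $\partial_v^-X$ (there are no unsigned annuli, so $\partial_v^0X=\emptyset$, and there is no $\partial_v^0X$ since $X$ is a handlebody and $\partial_h^-X=\emptyset$; indeed $\partial^-X=\partial_v^-X$).

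The core of the argument is condition (1) of Definition~\ref{Drelative compression body}: each annulus $A$ in $\partial_v^-X$ needs a dual disk. For $A=A_i$ I would simply take $D_{A_i}=\tau_i$: by the defining property of a marked handlebody, $\tau_i$ is a compressing disk with $\partial\tau_i\subset\partial X$, and $\partial\tau_i\cap A_i=\tau_i\cap A_i$ is a single essential arc of $A_i$, while $\tau_i\cap A_j=\emptyset$ for $j\ne i$, so in particular $\partial\tau_i$ misses every other annulus of $\partial_v^-X$. One must check $\partial\tau_i\subset F\times\{1\}$, i.e.\ that the boundary of $\tau_i$ lies in the horizontal-plus-marked part of $\partial X$ rather than crossing into $F\times\{0\}$; this is automatic from the product structure chosen above, since the $\tau_i$ are exactly (parallel to) the cocore disks kept in $F\times\{1\}$. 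Condition (2) asks that $D_{A_i}$ only intersect annuli of $\partial_v^+X$ of strictly smaller order; but $D_{A_i}=\tau_i$ is disjoint from \emph{all} annuli $A_j$ with $j\ne i$ by property (1) of the marked handlebody, so it intersects no annulus of $\partial_v^+X$ at all, and condition (2) holds vacuously regardless of the orders assigned.

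The remaining bookkeeping is routine: $\partial_v^0X=(\partial F)\times I$ is empty because $F$ is closed; the horizontal boundary decomposes as $\partial_h^+X=$ the closure of $F\times\{1\}\setminus(\partial_v^+X\cup\partial_v^-X)$, which is exactly the $\partial_hX$ of the marked-handlebody description with its plus sign, and $\partial_h^-X=$ the closure of $\partial X\setminus(F\times\{1\})$, which is empty; hence $\partial^-X=\partial_v^-X$ and $\partial^+X=\partial_h^+X\cup\partial_v^+X$. All the structures required by Definition~\ref{Drelative compression body} are thus present. I expect the one genuinely delicate point to be the construction of the product structure $F\times I$ compatible with the annuli $A_i$ and the disks $\tau_i$ simultaneously — i.e.\ verifying that after cutting along $\tau_1,\dots,\tau_m$ and the cocores of $H_1',\dots,H_p'$ one really recovers a $3$-ball that can be re-expanded as $F\times I$ with the $A_i$ landing in $F\times\{1\}$ and the $\partial\tau_i$ landing in $F\times\{1\}$ as well; everything after that is a direct check against the definition.
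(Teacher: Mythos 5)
Your proposal is correct and follows essentially the same route as the paper: the whole point is that each $\tau_i$ is a dual disk for $A_i$ that is disjoint from all other marked annuli, so condition (2) of Definition~\ref{Drelative compression body} holds vacuously for any assignment of signs and orders. The extra step you single out as delicate---exhibiting $X$ as $F\times I$ with $2$- and $3$-handles attached along $F\times\{0\}$, using the complete meridian system formed by the $\tau_i$ and the cocores of the $H_j'$---is just the standard fact that a handlebody is a compression body with $\partial_+=\partial X$, which the paper's proof leaves implicit.
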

\begin{proof}
	Let $A_1,\dots,A_m$ be the components of $\partial_vX$.  In our definition, 
	$X$ contains a collection of compressing disks $\tau_1,\dots,\tau_m$ such that 
	$\tau_i\cap A_j=\emptyset$ if $i\ne j$, and $\tau_i\cap A_i$ is an essential arc of $A_i$. 
	If $A_i$ has a minus sign, then $\tau_i$ is a dual disk disjoint from all other annuli in $\partial_vX$.
	In particular,  $\tau_i$ is a dual disk for $A_i$ no matter what orders these annuli have.  
	Hence $X$ is a relative compression body.  
\end{proof}

\begin{lemma}\label{Lconnected}
For any relative compression body $X$, $\partial^+X$ is connected.
\end{lemma}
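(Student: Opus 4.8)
The plan is to realize $\partial^+X$ as an explicit subsurface of the connected surface $F\times\{1\}$ and then use the dual disks to show that the annuli of $\partial_v^-X$ cannot chop off an extra component.

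First I would record the identification. Since $\partial_h^+X$, $\partial_v^+X$ and $\partial_v^-X$ together make up $F\times\{1\}$, meeting only along the boundary circles of the annuli, and since $\partial^+X=\partial_h^+X\cup\partial_v^+X$, we have
\[
\partial^+X=(F\times\{1\})\setminus\Int(\partial_v^-X).
\]
Equivalently, $F\times\{1\}$ is recovered from $\partial^+X$ by re-gluing each annulus $A$ of $\partial_v^-X$ along its two boundary circles, which lie in $\partial_h^+X$. Because $F$ is connected, $F\times\{1\}$ is connected.

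Next I would use the elementary fact that attaching an annulus to a surface along its two boundary circles leaves the number of components unchanged when both circles lie in a single component, and decreases it by one otherwise. So it is enough to check that the two boundary circles of every annulus $A$ of $\partial_v^-X$ already lie in the same component of $\partial^+X$; this is where the dual disk $D_A$ comes in. Write $\delta=\partial D_A\cap A$, which is a single essential (i.e.\ spanning) arc of $A$. By condition (1) of Definition~\ref{Drelative compression body}, $\partial D_A$ is disjoint from every other annulus of $\partial_v^-X$ (it may cross annuli of $\partial_v^+X$, but those are contained in $\partial^+X$). Hence $\eta:=\overline{\partial D_A\setminus\delta}$ is a single arc whose interior is disjoint from the interiors of all annuli of $\partial_v^-X$ and therefore lies in $\partial^+X$, and whose two endpoints lie on $\partial A\subset\partial_h^+X\subset\partial^+X$. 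Since $\delta$ is a spanning arc of $A$, its endpoints lie on different components of $\partial A$, so $\eta$ joins the two boundary circles of $A$ within $\partial^+X$.

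Putting these together: re-gluing all of $\partial_v^-X$ back onto $\partial^+X$ merges no components, so $\partial^+X$ has the same number of components as $F\times\{1\}$, namely one. I do not expect a genuine obstacle here; the only point that needs care is the bookkeeping showing that $\eta$ avoids the interiors of all the annuli of $\partial_v^-X$, which is exactly the ``disjoint from any other annulus in $\partial_v^-X$'' clause combined with $\partial D_A\cap A$ being a single arc, together with the observation that a spanning arc of an annulus has its two endpoints on distinct boundary circles.
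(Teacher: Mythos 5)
Your proof is correct and follows essentially the same route as the paper: the paper also starts from the connectedness of $F\times\{1\}$ and uses the dual disks to see that the annuli of $\partial_v^-X$ do not separate $F\times\{1\}$, which is exactly your observation that the arc $\overline{\partial D_A\setminus(\partial D_A\cap A)}$ joins the two boundary circles of $A$ inside $\partial^+X$. You simply make the component-counting under re-gluing explicit, which the paper leaves implicit.
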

\begin{proof}
In Definition~\ref{Drelative compression body}, $\partial^+X$ is the closure of $F\times\{1\}\setminus\partial_v^-X$ and the surface $F$ is connected.  The existence of the dual disks for each annulus in $\partial_v^-X$ implies that $\partial_v^-X$ is non-separating in  $F\times\{1\}$. Thus $\partial^+X$ is connected.
\end{proof}

\begin{definition}\label{Dstack}
Let $X_1,\dots, X_n$ be a collection of relative compression bodies. Suppose that we can glue $X_1,\dots, X_n$ together by identifying some components of $\partial_h^+X_1,\dots, \partial_h^+X_n$ in pairs and some components of  $\partial_h^-X_1,\dots, \partial_h^-X_n$ in pairs via surface homeomorphisms.  We would like to emphasize that we only identify positive (resp.~negative) boundary to positive (resp.~negative) boundary and do not mix positive and negative boundaries. 
The resulting manifold $\widehat{X}$ is called a \textbf{stack} of relative compression bodies, or simply a stack.   
The vertical boundaries $\partial_v X_1,\dots,\partial_vX_n$ are glued into a collection of annuli and tori in $\partial \widehat{X}$. We call the union of these annuli and tori the vertical boundary of $\widehat{X}$ and denote it by $\partial_v\widehat{X}$.  
Let $\partial_h\widehat{X}$ be the closure of $\partial\widehat{X}\setminus\partial_v\widehat{X}$, and we call $\partial_h\widehat{X}$ the horizontal boundary of $\widehat{X}$.  
So $\partial_h\widehat{X}$  consists of components of $\partial_h^\pm X_1,\dots, \partial_h^\pm X_n$ that are not identified to other components. 

The components of $\partial_h^\pm X_1,\dots, \partial_h^\pm X_n$ that are identified to other components become surfaces properly embedded in $\widehat{X}$, and we call these surfaces {\bf horizontal surfaces} in $\widehat{X}$.  Let $P$ be a horizontal surface in $\widehat{X}$. We say $P$ is a positive (resp.~negative) horizontal surface if $P$ lies in the positive (resp.~negative) boundary of some $X_i$.  
\end{definition}

\begin{definition}\label{Dgood}
Let $M=\mathcal{W}\cup_\mathcal{T} \mathcal{V}$ be an amalgamation of two compact 3-manifolds $\mathcal{W}$ and $\mathcal{V}$ along a torus $\mathcal{T}$.  Let $\Sigma$ be a collection of surfaces in $M$ transversely intersecting $\mathcal{T}$.  We say that $\Sigma$ is in \textbf{good position} with respect to $\mathcal{T}$ if 
\begin{enumerate}
  \item $\Sigma$ divides $\mathcal{W}$ and $\mathcal{V}$ into two  stacks of relative compression bodies, where $\Sigma\cap \mathcal{W}$ and $\Sigma\cap \mathcal{V}$ are the horizontal surfaces in the respective stacks,
  \item positive (resp.~negative) horizontal surfaces in $\mathcal{W}$ are glued to positive (resp.~negative) horizontal surfaces in $\mathcal{V}$ along $\mathcal{T}$. 
  \item if an annulus $A\subset\mathcal{T}$ is shared by two relative compression bodies $X\subset\mathcal{W}$ and $Y\subset\mathcal{V}$, then $A\subset\partial_v^\pm X$ if and only if $A\subset\partial_v^\mp Y$, and $A$ has the same order in both $X$ and $Y$.
\end{enumerate}
\end{definition}

\begin{lemma}\label{Ldivide}
Let $M=\mathcal{W}\cup_\mathcal{T} \mathcal{V}$ be an amalgamation of two compact irreducible 3-manifolds $\mathcal{W}$ and $\mathcal{V}$ along an incompressible torus $\mathcal{T}$.  Let $\Sigma$ be the collection of incompressible and strongly irreducible surfaces in an untelescoping of a Heegaard splitting of $M$ as in section~\ref{SGHS}.  Then, after isotopy, either 
\begin{enumerate}
\item  $\mathcal{T}$ is isotopic to a component of $\Sigma$, or 
\item $\Sigma$ is in good position with respect to $\mathcal{T}$.
\end{enumerate}
\end{lemma}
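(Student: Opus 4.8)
The plan is to put $\Sigma$ in normal form with respect to $\mathcal{T}$ using Lemma~\ref{LBSS}, then to show that cutting each compression body of the untelescoping along the resulting annuli produces relative compression bodies, and finally to check that these pieces assemble into two stacks satisfying the three conditions of Definition~\ref{Dgood}.

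First I would apply Lemma~\ref{LBSS}. Its first conclusion says that $\mathcal{T}$ is a component of some $\mathcal{F}_i\subset\Sigma$, which is conclusion~(1) of the present lemma. If after the isotopy provided by Lemma~\ref{LBSS} the surface $\Sigma$ happens to be disjoint from $\mathcal{T}$, then $\mathcal{T}$ is an incompressible closed surface contained in one of the compression bodies $\mathcal{A}_i$ or $\mathcal{B}_i$; since an incompressible closed surface in a compression body is $\partial$-parallel, $\mathcal{T}$ is isotopic to a boundary component of that compression body, hence to a component of $\Sigma$, and again we are in conclusion~(1). So assume from now on that the second conclusion of Lemma~\ref{LBSS} holds and $\Sigma\cap\mathcal{T}\neq\emptyset$. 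A standard innermost-disk and outermost-arc argument --- using that $\mathcal{W}$ and $\mathcal{V}$ are irreducible, that $\mathcal{T}$ is incompressible, and that each component of $\Sigma\cap\mathcal{W}$ and $\Sigma\cap\mathcal{V}$ is essential or strongly irreducible --- lets me isotope $\Sigma$ further, without disturbing the second conclusion of Lemma~\ref{LBSS}, so that every component of $\Sigma\cap\mathcal{T}$ is essential in $\mathcal{T}$. Being disjoint and essential on a torus, these curves are mutually parallel, so they cut $\mathcal{T}$ into parallel annuli; consequently, for each compression body $C$ among $\mathcal{A}_0,\mathcal{B}_0,\dots,\mathcal{A}_m,\mathcal{B}_m$, the surface $\mathcal{T}\cap C$ is a (possibly empty) collection of disjoint incompressible annuli properly embedded in $C$ with boundary on $\partial_+C\cup\partial_-C$.

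The heart of the proof is a local statement: if $C$ is a compression body and $\mathbb{A}\subset C$ is a collection of disjoint incompressible annuli, then every component $X$ of the manifold obtained by cutting $C$ along $\mathbb{A}$ is a relative compression body as in Definition~\ref{Drelative compression body}, in which $\partial_hX$ is the union of the corresponding pieces of $\partial_+C$ and $\partial_-C$ and $\partial_vX$ is made of copies of annuli of $\mathbb{A}$. To prove this I would fix a complete meridian disk system $\mathcal{D}$ of $C$ and isotope $\mathbb{A}$ to meet $\mathcal{D}$ minimally; incompressibility of $\mathbb{A}$ together with irreducibility of $C$ removes all circle components of $\mathbb{A}\cap\mathcal{D}$, and an outermost-arc isotopy removes all inessential arcs, so that $\mathbb{A}\cap\mathcal{D}$ consists of spanning arcs of the annuli of $\mathbb{A}$ and the arcs they cut on $\mathcal{D}$. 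The surgered disks then exhibit each component $X$ in the normal form of Definition~\ref{Drelative compression body}: a product $F\times I$, with $F$ a piece of $\partial_+C$, carrying $2$- and $3$-handles attached along $F\times\{0\}$, with the pieces of $\partial_\pm C$ forming $\partial_hX$. A case analysis --- essentially the classification of incompressible annuli in a compression body --- shows that an annulus of $\mathbb{A}$ in $\partial X$ is either a vertical annulus of a product sub-region, in which case it becomes a component of $\partial_v^0X$, or it has both boundary circles on $\partial_+C$, in which case it becomes a component of $\partial_v^+X$ or of $\partial_v^-X$; for an annulus $A$ destined for $\partial_v^-X$, a sub-disk of $\mathcal{D}$ meeting $A$ supplies a dual disk $D_A$, and the order $o(A)$ is read off from how deeply $A$ is nested among the meridian disks, which is exactly what forces $D_A$ to meet only annuli of strictly smaller order. (Alternatively, one can induct on $|\mathbb{A}|$, cutting off one annulus at a time and using the stability properties collected in Remark~\ref{Rproperty}.) I expect this local statement --- in particular producing the dual disks simultaneously with a consistent, acyclic ordering --- to be the main obstacle of the proof.

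It remains to reassemble and to verify the three conditions of Definition~\ref{Dgood}. Cutting every $\mathcal{A}_i$ and $\mathcal{B}_i$ along $\mathcal{T}$ and applying the local statement, the components that lie in $\mathcal{W}$ are relative compression bodies, glued to one another precisely along the components of $\Sigma\cap\mathcal{W}$, and likewise inside $\mathcal{V}$; this gives condition~(1). Since $\mathcal{P}_i=\partial_+\mathcal{A}_i=\partial_+\mathcal{B}_i$ and $\mathcal{F}_i=\partial_-\mathcal{B}_{i-1}=\partial_-\mathcal{A}_i$ in the untelescoping, every piece of a $\mathcal{P}_i$ is a positive horizontal surface on both of its sides and every piece of an $\mathcal{F}_i$ is a negative horizontal surface on both sides; this shows simultaneously that the gluings are positive-to-positive and negative-to-negative --- so that $\mathcal{W}$ and $\mathcal{V}$ are indeed stacks in the sense of Definition~\ref{Dstack} --- and that condition~(2) holds. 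For condition~(3), an annulus $A\subset\mathcal{T}$ shared by a relative compression body $X\subset\mathcal{W}$ and a relative compression body $Y\subset\mathcal{V}$ arises by cutting a single compression body $C\in\{\mathcal{A}_i,\mathcal{B}_i\}$ along $\mathcal{T}\cap C$; since the relative-compression-body structures on $X$ and on $Y$ are read off from one and the same meridian disk system of $C$, the side of $A$ that carries a dual disk and the order assigned to $A$ are the same whether $A$ is viewed inside $X$ or inside $Y$. Hence $A\subset\partial_v^\pm X$ if and only if $A\subset\partial_v^\mp Y$, with the same order, which is condition~(3). Therefore $\Sigma$ is in good position with respect to $\mathcal{T}$.
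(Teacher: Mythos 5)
Your outer framework matches the paper's: apply Lemma~\ref{LBSS}, reduce to the case where $\mathcal{T}$ meets each compression body of the untelescoping in incompressible annuli, cut each compression body along these annuli, and check the conditions of Definition~\ref{Dgood} (your observations that pieces of each $\mathcal{P}_i$ are positive on both sides, pieces of each $\mathcal{F}_i$ negative on both sides, and that signs/orders attached to an annulus of $\mathcal{T}$ are intrinsic to the annulus and hence agree from the $\mathcal{W}$- and $\mathcal{V}$-sides, are all in the spirit of the paper). But the heart of the lemma is exactly the ``local statement'' you defer: that the complementary pieces are relative compression bodies, \emph{with dual disks and a system of orders satisfying conditions (1)--(2) of Definition~\ref{Drelative compression body}}, and your sketch does not establish this. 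Saying that a minimal-position meridian system $\mathcal{D}$ ``exhibits each component in the normal form'' and that ``a sub-disk of $\mathcal{D}$ meeting $A$ supplies a dual disk $D_A$'' skips the actual difficulty: a complementary subdisk of $\mathcal{D}$ adjacent to an annulus $A$ may meet $A$ in several spanning arcs, and its boundary may run across other annuli whose minus sides face the same piece, so it is not automatically a dual disk; and ``nesting depth among the meridian disks'' does not by itself force a candidate dual disk to meet only annuli of strictly smaller order on their plus sides. Whether the signs (which side of each annulus carries its dual disk) and the orders can be chosen \emph{simultaneously and consistently} for the whole family of annuli is precisely the nontrivial content, and you explicitly flag it as the expected main obstacle rather than resolving it.

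For comparison, the paper resolves this by working with $\partial$-compressing disks $\Delta_i$ for the annuli (chosen with minimal intersection with $\mathcal{T}$), proving a ``coherence'' claim --- repeatedly locate an outermost annulus, band two copies of its disk into a compressing disk, make all other $\Delta_j$ disjoint from it by boundary-compressions, compress, and induct --- so that every annulus is coherent with every $\Delta_i$; the order $o(A_i)$ is then defined from the nesting of arcs in $\Delta_i$, the sign of the two sides of $A_i$ from the side containing $\Delta_i$, and the dual disk of $A_i$ in the adjacent piece is the subdisk of $\Delta_i$ containing $\alpha_i$, with coherence guaranteeing it crosses only plus-annuli of smaller order. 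The paper then needs a second induction (compressing along the banded disks in order of increasing $o$, absorbing the resulting collars, and finally removing the spanning annuli) to show that maximally compressing $\partial^+N$ of each piece yields $\partial^-N\times I$; in your proposal this product structure is asserted rather than proven. Your alternative parenthetical (induct on the number of annuli using Remark~\ref{Rproperty}) faces the same bookkeeping problem, since after the first cut the ambient piece is only a relative compression body and the inductive hypothesis and order assignments must be strengthened accordingly. Finally, your local statement as phrased (arbitrary incompressible annuli) would also need to exclude annuli with both boundary circles on $\partial_-C$ (ruled out in the paper via Lemma~\ref{LBSS}, since such an annulus would be boundary-parallel), as such a piece has empty positive boundary and admits no relative compression body structure.
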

\begin{proof}
By Lemma~\ref{LBSS}, we may assume $\mathcal{T}$ intersects each compression body in a collection of incompressible annuli. 
Let $X$ be a compression body in the untelescoping. Consider the set of annuli $\mathcal{T}\cap X$.  
First, note that no annulus in $\mathcal{T}\cap X$ can have both boundary circles in $\partial_-X$.  To see this, if $A$ is a component of $\mathcal{T}\cap X$ with $\partial A\subset\partial_-X$, then since $A$ is incompressible, for any compressing disk $D$ of $\partial_+X$, one can isotope $D$ so that $D\cap A=\emptyset$.  Recall that if one maximally compresses $\partial_+X$, the resulting manifold is a product $\partial_-X\times I$.  As $\partial A\subset\partial_-X$, this means that $A$ must be a $\partial$-parallel annulus, a contradiction to Lemma~\ref{LBSS}.  Thus each annulus in $\mathcal{T}\cap X$ has at least one boundary curve in $\partial_+X$.  If an incompressible annulus has one boundary circle in $\partial_+X$ and the other boundary circle in $\partial_-X$, we call $A$ a spanning annulus (or vertical annulus) in $X$.

Let $A_1,\dots, A_n$ be the annuli in $\mathcal{T}\cap X$ with both boundary curves in $\partial_+X$, and let $C_1,\dots, C_m$ be the spanning annuli in $\mathcal{T}\cap X$ with one boundary curve in $\partial_+X$ and the other boundary curve in $\partial_-X$.  Since $X$ is a compression body and $\partial A_i\subset\partial_+X$, each $A_i$ is $\partial$-compressible in $X$.  Thus we can assign a $\partial$-compressing disk $\Delta_i$ to each $A_i$, with $\partial\Delta_i=\alpha_i\cup\beta_i$, $\beta_i\subset\partial_+X$, and $\alpha_i$ being an essential arc in $A_i$.  Note that $\Delta_i$ may intersect other annuli in $\mathcal{T}\cap X$, but we require that $\Delta_i$ has minimal intersection with $\mathcal{T}\cap X$ among all $\partial$-compressing disks for $A_i$, for each $i$.

For each annulus $E$ in $\mathcal{T}\cap X$, if $E\cap\Delta_i$ contains a closed curve, then since $E$ is incompressible, this curve must be trivial in both $E$ and $\Delta_i$.  
By compressing $\Delta_i$ along an innermost such closed curve, we get a new $\partial$-compressing disk for $A_i$ with fewer intersection curves with $E$.  Similarly, if $E\cap\Delta_i$ contains an arc that is $\partial$-parallel in $E$, then by performing a $\partial$-compression on $\Delta_i$ along an outermost such arc, we get a new $\partial$-compressing disk for $A_i$ with fewer intersection arcs with $E$.  Since $\Delta_i$ is chosen to have minimal intersection with $\mathcal{T}\cap X$, this implies that $\Delta_i\cap E$ consists of arcs essential in $E$ for every annulus $E$ in $\mathcal{T}\cap X$ and for each $i$.  
 Since $\partial\Delta_i\cap\partial_-X=\emptyset$, this implies that $\Delta_i\cap C_k=\emptyset$ for all $i$, $k$.  

For each arc $\delta$ in $\Delta_i\cap A_k$, $\delta$ is an arc in $\Delta_i$ with both endpoints in $\beta_i$. Moreover, the subarc of $\beta_i$ between the two points of $\partial\delta$, together with $\delta$, bounds a subdisk of $\Delta_i$, which we denote by $\Delta_\delta$.  We say that $A_k$ is \emph{coherent} with $\Delta_i$ if (1) these subdisks $\Delta_\delta$ for all the arcs of $\Delta_i\cap A_k$ are non-nested in $\Delta_i$ and (2) each subdisk $\Delta_\delta$  is parallel to the $\partial$-compressing disk $\Delta_k$ of $A_k$.

\begin{claim*}
These $\partial$-compressing disks $\Delta_i$ may be chosen so that, for every $k$, $A_k$ is coherent with all the $\Delta_i$.
\end{claim*}
\begin{proof}[Proof of the Claim] Consider an annulus $A_i$ and its $\partial$-compressing disk $\Delta_i$. 
We say that $A_i$ is outermost if the $\partial$-compressing disk $\Delta_i$ is disjoint from all other annuli in $\mathcal{T}\cap X$. 
Note that we can choose these $\partial$-compressing disks $\Delta_i$ so that there is at least one outermost annulus.  To see this, suppose $A_i$ is not outermost, then the intersection of $\Delta_i$ with $\mathcal{T}\cap X$ is a collection of arcs with endpoints in $\beta_i$.  Let $\delta$ be an outermost such intersection arc in $\Delta_i$.  So $\delta$ is an essential arc of some annulus $A_j$ and $\delta$ cuts off a subdisk $\Delta'$ in $\Delta_i$.  Note that $\Delta'$ is a $\partial$-compressing disk for $A_j$ and $\Delta'$ is disjoint from all other annuli in $\mathcal{T}\cap X$. 
By choosing $\Delta_j=\Delta'$, we see that $A_j$ is an outermost annulus.

Without loss of generality, suppose $A_1$ is outermost.  As shown in Figure~\ref{Fmono}(a), by connecting two parallel copies of $\Delta_1$ using a band around $A_1$, we get a compressing disk $D_1$ for $\partial_+X$, see the shaded region in Figure~\ref{Fmono}(b) for a picture of $D_1$.  Since $\Delta_1$ is disjoint from other annuli in $\mathcal{T}\cap X$, after a small perturbation, we may assume $D_1$ is disjoint from $\mathcal{T}\cap X$.  
Note that $D_1\cap\Delta_1=\emptyset$.  If $D_1$ intersects other $\Delta_i$ ($i=2,\dots,n$), consider an arc $\delta'$ in $D_1\cap(\bigcup_{i=2}^n\Delta_i)$ that is outermost in $D_1$.  
Suppose $\delta'$ is an arc in $\Delta_i$.  Then we can perform a $\partial$-compression on $\Delta_i$ along the subdisk of $D_1$ cut off by $\delta'$.  
This $\partial$-compression on $\Delta_i$ produces a new $\partial$-compressing disk $\Delta_i'$ for $A_i$.  Since $D_1$ is disjoint from $\mathcal{T}\cap X$, $\Delta_i'$ still has minimal intersection with $\mathcal{T}\cap X$ and we can replace $\Delta_i$ with $\Delta_i'$. 
 After finitely many such $\partial$-compressions, we get a new set of $\partial$-compressing disks, which we still denote by $\Delta_1,\dots \Delta_n$, such that $D_1\cap\Delta_i=\emptyset$ for all $i$.

As illustrated in Figure~\ref{Fmono}(b), 
if we compress $X$ along $D_1$, $A_1$ becomes a $\partial$-parallel annulus in the resulting compression body.  Thus, if a $\partial$-compressing disk $\Delta_i$ intersects $A_1$, then the subdisks of $\Delta_i$ cut off by $A_1$ are all parallel copies of $\Delta_1$.  This means that $A_1$ is coherent with every $\Delta_i$.

We compress $X$ along $D_1$ and get a new compression body $X_1$.  Now we can ignore the annulus $A_1$ (since $A_1$ is $\partial$-parallel in $X_1$) and consider $A_2,\dots, A_n$ and their $\partial$-compressing disks $\Delta_2,\dots, \Delta_n$ in $X_1$.  We can find an annulus (without considering $A_1$) that is outermost in $X_1$ and repeat the argument above.  Therefore, we can inductively conclude that each $A_k$ is coherent with every $\Delta_i$ for all $k$.
\end{proof}

We define an order for each $\Delta_i$ as follows.  If $\Delta_i$ is disjoint from other annuli of $\mathcal{T}\cap X$, i.e.~$A_i$ is outermost, then set the order $o(\Delta_i)$ to be $0$.  Suppose $\Delta_i$ intersects other annuli of $\mathcal{T}\cap X$. 
Then the intersection consists of arcs with endpoints in $\beta_i\subset\partial\Delta_i$.  Each arc, together with a subarc of $\beta_i$, bounds a subdisk of $\Delta_i$.  Roughly speaking, the order $o(\Delta_i)$ is the maximal number of such subdisks that are nested to one another.  
More precisely, for any point $x\in \Int(\beta_i)$, we draw an arc $\gamma_x$ in $\Delta_i$ connecting $x$ to $\alpha_i$ ($\alpha_i= A_i\cap\partial\Delta_i$), see Figure~\ref{Forder}, and count the number of intersection points of $\Int(\gamma_x)$ with the annuli $\mathcal{T}\cap X$, i.e.~ the number $|\Int(\gamma_x)\cap \mathcal{T}|$. First define $o(x)$ to be the minimal number of such intersection points amount all arcs connecting $x$ to $\alpha_i$.  This means that if $\Delta_i$ intersects other annuli of $\mathcal{T}\cap X$, there is a point $x\in \beta_i$ with $o(x)\ge 1$.  As illustrated in Figure~\ref{Forder}, we define the order of $\Delta_i$ to be $o(\Delta_i)=\max_{x\in\beta_i}\{ o(x)\}$.  
 We define the order of the annulus $A_i$ to be $o(A_i)=o(\Delta_i)$.  Furthermore, for each annulus $A_i$, we assign a normal vector pointing into the $\partial$-compressing disk $\Delta_i$.  We assign $\pm$-signs to the two sides of $A_i$ so that this normal vector pointing from the plus-side to the minus-side (i.e.~$\Delta_i$ is on the minus-side of $A_i$).

\begin{figure}[h]
	\begin{center}
		\begin{overpic}[width=1.3in]{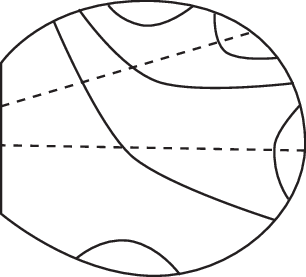}
			\put(-11,28){$\alpha_i$}
			\put(84.5,80){$x$}
			\put(101,40){$y$}
			\put(11, 64){$\gamma_x$}
			\put(18,18){$\Delta_i$}
			\put(110, 80){$o(x)=3$}
            \put(115, 40){$o(y)=2$}
            \put(110, 10){$o(\Delta_i)=3$}
		\end{overpic}
%		\vspace{3pt}
		\caption{The order of $\Delta_i$}\label{Forder}
	\end{center}
\end{figure}

The annuli of $\mathcal{T}\cap X$ divide $X$ into submanifolds and let $N$ be one of these submanifolds.  
We will show next that each $N$ is a relative compression body.  First note that $\partial N$ has 3 parts:
\begin{enumerate}
 \item the annuli from $\mathcal{T}\cap X$, which we denote by $\partial_vN$, 
 \item a subsurface of $\partial_-X$, i.e.~$\partial N\cap\partial_-X$, denoted by $\partial_h^-N$, and 
 \item a subsurface of $\partial_+X$, i.e.~$\partial N\cap\partial_+X$, denoted by $\partial_h^+N$.  
\end{enumerate}
 We also divide the annuli in $\partial_vN$ also into 3 types: $\partial_v^0N$, $\partial_v^+N$ and $\partial_v^-N$, where 
 \begin{enumerate}
\item $\partial_v^0N$ consists of annuli connecting $\partial_+X$ to $\partial_-X$, i.e.~$\partial_v^0N$ corresponds to the spanning annuli $C_1,\dots, C_m$, 
\item $\partial_v^+N$ consists of annuli $A_i$ with normal direction pointing out of $N$, and 
\item $\partial_v^-N$ consists of annuli $A_i$ with normal direction pointing into $N$.  
 \end{enumerate}
Set the order of each annulus $A_i$ in $\partial_v^\pm N$ to be the order $o(A_i)$ defined for $\mathcal{T}\cap X$ above.  
 Each $\partial$-compressing disk $\Delta_i$ is cut into a collection of subdisks by $\mathcal{T}\cap X$, and let $\Delta_i'$ be the subdisk that contains the arc $\alpha_i=\partial\Delta_i\cap A_i$.  So, if $N$ is on the minus side of $A_i$, then the minus side of $A_i$ is an annulus of $\partial_v^-N$, $\Delta_i'\subset N$, and $\Delta_i'$ is the dual disk of $A_i$ in $N$. 
   By the claim above, we may assume each $A_k$ is coherent with each $\Delta_i$. Thus the definition of $o(A_i)$ above implies that the dual disk $\Delta_i'$ satisfies the conditions in Definition~\ref{Drelative compression body}.

Since each annulus of $\partial_v^-N$ has a dual disk, to prove that $N$ is a relative compression body, it suffices to show that, for each region $N$, after a sequence of compressions on $\partial^+N$, we obtain a product neighborhood of $\partial^-N$.  We prove this using the fact that $X$ is a compression body, which means that if we maximally compress $\partial_+X$, we obtain a product neighborhood of $\partial_-X$.

Start with $\partial$-compressing disks of order $0$. Let $\Delta_i$ be a  $\partial$-compressing disk of order $0$, i.e.~$A_i$ is an outermost annulus. Let $\widehat{\Delta}_i$ be the disk obtained by connecting two parallel copies of $\Delta_i$ using a band around $A_i$, as shown in Figure~\ref{Fmono}(a).  
As in the proof of the claim, we may assume that, after isotopy, $\widehat{\Delta}_i$ does not intersect the $\partial$-compressing disks.  

Instead of considering $N$ itself, we consider all the components of $\overline{X\setminus \mathcal{T}}$ at the same time. 
Denote the two components of $\overline{X\setminus \mathcal{T}}$ on the plus and minus sides of $A_i$ by $N_+$ and $N_-$ respectively.  
So the two sides of annulus $A_i$ can be viewed as annuli in $\partial_v^+N_+$ and $\partial_v^-N_-$.  Moreover $\widehat{\Delta}_i\subset N_-$ and $\widehat{\Delta}_i$ is a compressing disk for both $\partial_h^+ N_-$ and $\partial_+X$.  

Now compress $N_-$ along $\widehat{\Delta}_i$.  As in the proof of the Claim, after the compression along $\widehat{\Delta}_i$, $A_i$ becomes a $\partial$-parallel annulus in the resulting manifold.  So the compressing disk $\widehat{\Delta}_i$ divide $N_-$ into a collar neighborhood of $A_i$, which we denote by $E_i$, and a submanifold with fewer vertical boundary components.  Enlarge $N_+$ by including this solid torus $E_i$ into $N_+$ and then delete the annulus $A_i$.  Since $E_i$ is a collar neighborhood of $A_i$, this operation does not really change $N_+$.  The effect of this operation on $\partial N_+$ is equivalent to merging $A_i$ from $\partial_v^+N_+$ into $\partial_h^+N_+$.  We do these operations on all the disks $\Delta_i$ of order 0.  
Note that the compressions on $N_-$ are also compressions on $\partial_+X$, and  these compressions change $X$ into a new (possibly disconnected) compression body, and we can consider the remaining annuli $A_j$ in the new compression body.

Next, consider $\partial$-compressions disks $\Delta_j$ of order $1$.  Since the annuli of order $0$ are deleted, the disks $\Delta_j$ of order $1$ do not intersect other annuli.   Thus we can apply the same operations using the disks $\Delta_j$ of order 1, i.e., first compress the compression body along the disk illustrated in Figure~\ref{Fmono}, and then remove the annuli $A_j$ of order 1.
We can inductively repeat this operation using these $\partial$-compressing disks.   
After all these compressions and deleting the annuli $A_k$'s, the remaining annuli in the resulting (possibly disconnected) compression body are a collection of spanning annuli, i.e.~the annuli $C_1,\dots C_m$.  So we can perform more compressions disjoint from the spanning annuli $C_i$ and change the compression body into a product $\partial_-X\times I$.  
By restricting these operations on each component $N$ of $\overline{X\setminus \mathcal{T}}$, we see that if one maximally compresses $\partial^+N$, the resulting manifold is $\partial^- N\times I$.  Therefore, $N$ satisfies all the requirements of Definition~\ref{Drelative compression body} and is a relative compression body. 
Moreover, it follows from our construction that all the conditions in Definition~\ref{Dgood} are satisfied.
\end{proof}

Next, we prove a converse to Lemma~\ref{Ldivide}.

\begin{lemma}\label{Lmerge}
Let $\widehat{\mathcal{W}}$ and $\widehat{\mathcal{V}}$ be compact 3-manifolds with torus boundary. Suppose there are collections of horizontal surfaces dividing $\widehat{\mathcal{W}}$ and $\widehat{\mathcal{V}}$ into stacks of relative compression bodies.  Suppose the boundary curves of these horizontal surfaces of $\widehat{\mathcal{W}}$ and $\widehat{\mathcal{V}}$ divide the tori $\partial \mathcal{W}$ and $\partial \mathcal{V}$ into the same number of annuli $w_1,\dots,w_k$ and $v_1\dots, v_k$ respectively.  
Let $W_i$ and $V_i$ be the relative compression bodies in the stacks $\widehat{\mathcal{W}}$ and $\widehat{\mathcal{V}}$ containing $w_i$ and $v_i$ respectively (it is possible that $W_i=W_j$ and $V_i=V_j$ for different $i$, $j$).
Suppose that 
\begin{enumerate}
  \item  $w_i\subset\partial_v^\pm W_i$ if an only if $v_i\subset\partial_v^\mp V_i$, and $o(w_i)=o(v_i)$ for all $i$.
  \item  $w_i\subset\partial_v^0 W_i$ if and only if $v_i\subset\partial_v^0 V_i$.
\end{enumerate}  
Let $M$ be the closed 3-manifold obtained by gluing $\widehat{\mathcal{W}}$ to $\widehat{\mathcal{V}}$ and identifying $w_i$ to $v_i$ for all $i$, and suppose positive (resp.~negative) horizontal surfaces in $\widehat{\mathcal{W}}$ are glued to positive (resp.~negative) horizontal surfaces of $\widehat{\mathcal{V}}$.  Then the union of the horizontal surfaces of $\widehat{\mathcal{W}}$ and $\widehat{\mathcal{V}}$ divides $M$ into a collection of compression bodies.
\end{lemma}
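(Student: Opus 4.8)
The plan is to reverse the construction from the proof of Lemma~\ref{Ldivide}. First I would cut $M$ along all the horizontal surfaces of $\widehat{\mathcal W}$ and $\widehat{\mathcal V}$ --- those internal to the two stacks together with the identified copies of $\partial_h\widehat{\mathcal W}$ and $\partial_h\widehat{\mathcal V}$. Since inside a stack distinct relative compression bodies meet only along horizontal surfaces, and since positive boundary is glued to positive and negative to negative everywhere, each component $\mathcal N$ of the result is a union of relative compression bodies coming from $\widehat{\mathcal W}$ and from $\widehat{\mathcal V}$ that are glued to one another only along the annuli $w_i=v_i$, every one of which is interior to $\mathcal N$; in particular $\mathcal N$ has no free vertical boundary, and $\partial\mathcal N$ is a disjoint union of ``positive'' and ``negative'' horizontal surfaces. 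It suffices to show each such $\mathcal N$ is a compression body whose positive boundary is the union of the positive horizontal surfaces on $\partial\mathcal N$ and whose negative boundary is the union of the negative ones.

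Fix $\mathcal N$ and induct on the number $n$ of annuli in $\mathcal T\cap\mathcal N$. When $n=0$, $\mathcal N$ is a single relative compression body with empty vertical boundary, hence a compression body with $\partial_\pm\mathcal N$ as required. For $n>0$, I would merge the annuli one at a time: first every annulus that lies in $\partial_v^0$ on both of its sides (the types of $w_i$ and $v_i$ agree by hypotheses (1)--(2)), then the rest, all of which lie in $\partial_v^+$ on one side and $\partial_v^-$ on the other, processed in order of increasing order $o$. A $\partial_v^0$ merge just glues the two product regions $F\times I$ along a boundary annulus, giving $(F\cup_\partial F')\times I$ with all $2$- and $3$-handles untouched; this is again a relative compression body with the same remaining annuli, types and orders. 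For a $\partial_v^\pm$ merge, let $A=w_i=v_i$ be a remaining annulus of minimal order $d$, shared by $X$ with $A\in\partial_v^+X$ and $Y$ with $A\in\partial_v^-Y$; since $o_X(A)=o_Y(A)=d$ is minimal among the remaining $\partial_v^\pm$ annuli, a dual disk $D_A$ of $A$ in $Y$ can be taken disjoint from every remaining vertical annulus. I claim $Z=X\cup_A Y$ is a relative compression body whose vertical annuli are those of $X$ and $Y$ other than $A$, with unchanged types and orders and with the compatibility conditions (1)--(2) still holding against the other pieces; this reduces $n$ by one and the induction closes. (Any extra annuli shared by the same pair $X,Y$ become self-gluings, of the same two kinds, and are absorbed by the same moves.)

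To prove the merge claim in the $\partial_v^\pm$ case I would use the relative-compression-body calculus of Section~\ref{Srelative compression body}. By Remark~\ref{Rproperty}(3), banding two copies of $D_A$ around $A$ produces a compressing disk of $\partial_h^+Y$ cutting off a collar $A\times I$ of $A$; absorbing that collar into $X$ presents $Z$ as a boundary-connected sum of $X$ with the relative compression body $\overline{Y\setminus(A\times I)}$ along disks in their positive horizontal boundaries, which is the effect of one $1$-handle on $\partial_h^+$ and so keeps $Z$ a relative compression body by the reasoning of Remark~\ref{Rproperty}(4)--(5) (its positive boundary, obtained from $\partial^+X$ by deleting $\Int A$ and gluing in $\partial^+Y$ along $\partial A$, is connected by Lemma~\ref{Lconnected}). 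It then remains to supply dual disks in $Z$ for the remaining $\partial_v^-$ annuli. A dual disk of a $\partial_v^-Y$ annulus is automatically disjoint from $A$ and survives verbatim; a dual disk $D_B$ of a $\partial_v^-X$ annulus $B$ is either disjoint from $A$, or meets it --- which forces $o(B)>d$ --- in which case I reroute $D_B$ by pushing its arcs on $A$ across into $Y$ and running them along parallel copies of $D_A$. Because each such copy of $D_A$ meets no remaining vertical annulus, the rerouted disk still obeys Definition~\ref{Drelative compression body} with the order of $B$ unchanged; by Remark~\ref{Rproperty}(2) the dual disks can then be made mutually disjoint. Nothing about the other annuli was touched, so conditions (1)--(2) persist for them.

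The main obstacle is this merge claim, and specifically the verification that the merged $Z$ satisfies all of Definition~\ref{Drelative compression body}: that maximally compressing $\partial^+Z$ returns $\partial^-Z\times I$ (which I would get from the core graphs of the two pieces joined by an arc through the absorbed collar), and that the rerouted dual disks are embedded, mutually compatible, and carry the correct orders. The hypothesis $o(w_i)=o(v_i)$ is used exactly here --- it is what allows a dual disk on one side of $\mathcal T$ to be spliced with a dual disk on the other without introducing an order violation --- and the device of always merging a minimal-order annulus first is what keeps each splicing disk disjoint from all remaining vertical annuli, so that no cascade of further rerouting is set off. Everything else is routine handle bookkeeping, together with the remark that $\mathcal T$ is incompressible, so interior intersection circles of any disk with an annulus of $\mathcal T\cap\mathcal N$ are trivial and may be removed.
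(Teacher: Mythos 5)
Your proposal is correct and rests on the same key mechanism as the paper's proof: because $o(w_i)=o(v_i)$, the dual disk of a minimal-order annulus of $\mathcal{T}\cap\mathcal{N}$ misses every other annulus, so one bands it, eliminates that annulus, proceeds through the annuli in increasing order, and finishes with the product structures inside the pieces matching up along $\mathcal{T}$. The only difference is organizational: the paper compresses $\partial_+\mathcal{N}_i$ globally along the banded dual disks and pushes the resulting parallel annuli to the other side of $\mathcal{T}$, instead of merging pieces pairwise and re-certifying the relative compression body structure (with rerouted dual disks) at each step, but the underlying argument is essentially the same.
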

\begin{proof}
The proof of this lemma is similar to the latter part of the proof of Lemma~\ref{Ldivide}. First note that the positive and negative horizontal surfaces in $\widehat{\mathcal{W}}$ and $\widehat{\mathcal{V}}$ match up and yield a collection of positive and negative surfaces in $M$.  The relative compression bodies in $\widehat{\mathcal{W}}$ and $\widehat{\mathcal{V}}$ also match up and become a collection of regions between these positive and negative surfaces.  
Denote these regions by $\mathcal{N}_1,\dots \mathcal{N}_k$.  We use $\partial_+\mathcal{N}_i$ and $\partial_-\mathcal{N}_i$ to denote the unions of the components of $\partial \mathcal{N}_i$ that are positive and negative surfaces respectively.  By the definition of relative compression body, $\partial_+\mathcal{N}_i\ne\emptyset$ for each $i$.

Our goal is to show that each $\mathcal{N}_i$ is a compression body and $\partial_+\mathcal{N}_i$ and $\partial_-\mathcal{N}_i$ are its plus and minus boundaries respectively.  

We view $M=\widehat{\mathcal{W}}\cup_{\mathcal{T}}\widehat{\mathcal{V}}$, where $\mathcal{T}= \partial\widehat{\mathcal{W}}=\partial\widehat{\mathcal{V}}$ is a torus in $M$. 
Let $X_1,\dots,X_m$ be the relative compression bodies of the stacks $\widehat{\mathcal{W}}$ and $\widehat{\mathcal{V}}$ that lie in $\mathcal{N}_i$.  
For each annulus $A$ of $\mathcal{T}\cap\mathcal{N}_i$, 
the two sides of $A$ are viewed as annuli in $\partial_v X_1,\dots,\partial_vX_m$. By the hypotheses, if one side of $A$ is a component of $\partial_v^\pm X_i$, then the other side of $A$ must be a component of $\partial_v^\mp X_j$ with the same order, similar to the conditions in Definition~\ref{Dgood}.

We first consider the annuli in $\partial_v^-X_j$ ($j=1,\dots,m$) and their dual disks.  Define the order $o(\Delta_i)$ of each dual disk $\Delta_i$ to be the order of the corresponding annulus.  Suppose $\Delta_1,\dots,\Delta_k$ are all the dual disks in the $X_i$'s and suppose $o(\Delta_1)\le o(\Delta_2)\le\cdots\le o(\Delta_k)$.  Since $\Delta_1$ has the smallest order among all dual disks in $\mathcal{N}_i$, and by the hypothesis that $o(w_j)=o(v_j)$ for all $j$ (i.e.~the orders of the two sides of the same annulus are the same), $\partial\Delta_1$ does not meet any other annulus in $\mathcal{T}\cap\mathcal{N}_i$.   
Hence, by connecting two parallel copies of $\Delta_1$ using a band as in Figure~\ref{Fmono}(a), we obtain a compressing disk $D_1$ for $\partial_+\mathcal{N}_i$ and $D_1$ is disjoint from the torus $\mathcal{T}$.  Similar to the proof of Lemma~\ref{Ldivide}, we may assume $D_1\cap\Delta_j=\emptyset$ for all $j$. Moreover, we can compress $\partial_+\mathcal{N}_i$ along $D_1$ and then push the resulting annulus that is parallel to $A_1$ to the other side of the torus $\mathcal{T}$.  
Since $A_1$ is eliminated from the compression body after this operation, $\partial\Delta_2$ does not meet any other annulus and we can repeat this operation using $\Delta_2$. 
So we successively repeat this operation using the disks $\Delta_2,\dots,\Delta_k$, and denote the resulting relative compression bodies by $X_1',\dots, X_m'$.   So, after we finish these compressions and isotopies, $\partial_v^\pm X_i'=\emptyset$ for all $i$. 

Similar to the proof of Lemma~\ref{Ldivide}, we can perform more compressions on each $\partial_h^+X_i'$ changing the manifold into a product $\partial_h^-X_i'\times I$. 
These products match up along $\mathcal{T}$ and yield a product $\partial_-\mathcal{N}_i\times I$.   This means that if we maximally compress $\partial_+\mathcal{N}_i$ in $\mathcal{N}_i$, the resulting manifold is $\partial_-\mathcal{N}_i\times I$.  Hence each $\mathcal{N}_i$ is a compression body.
\end{proof}

We end this section with certain conditions that describe the boundary of a relative compression body.

\begin{definition}\label{Dadmissible}
Let $S$ be a closed orientable surface.  Suppose $S$ contains a collection of  annuli, which we denote by  $\partial_vX$. Denote the closure of $S\setminus\partial_vX$ by  $\partial_hX$.  Assign each component of $\partial_hX$ a plus or minus sign and use $\partial_h^\pm X$ to denote the union of the components with $\pm$-sign. Assign each component of $\partial_vX$ a $\pm$-sign or no sign. Denote the union of the annuli in $\partial_vX$ with $\pm$-sign by $\partial_v^\pm X$, and denote the union of the annuli in $\partial_vX$ with no sign by  $\partial_v^0X$.  Moreover, we give each annulus $A$ in $\partial_v^\pm X$ a numerical order, denoted by $o(A)$.  Let $\partial^\pm X=\partial_h^\pm X\cup\partial_v^\pm X$.  We say that these subsurfaces of $S$ with the signs and orders are \textbf{admissible} if
\begin{enumerate}
  \item there is no annulus in $\partial_vX$ with both boundary circles in $\partial_h^-X$,
  \item each annulus in $\partial_v^0X$ has one boundary circle in $\partial_h^+X$ and the other boundary circle in $\partial_h^-X$,
  \item all the annuli in $\partial_v^+X$ and $\partial_v^-X$ have boundary curves in $\partial_h^+X$,
  \item  $\partial^+X$ is connected.
\item  For each annulus $A$ in $\partial_v^-X$, there is an arc $\alpha$ in $\partial^+X$ that connects the two boundary circles of $A$, such that $\alpha$ only intersects annuli in $\partial_v^+X$ with order smaller than $o(A)$.
\end{enumerate}  
\end{definition}

\begin{lemma}\label{Ladmissible}
Let $X$ be a relative compression body.  Then $\partial X$ and the signs and orders in Definition~\ref{Drelative compression body} are admissible.
\end{lemma}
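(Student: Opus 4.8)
The plan is to check the five conditions of Definition~\ref{Dadmissible}, one at a time, directly from the structure of $X$ given in Definition~\ref{Drelative compression body}. Recall that $X$ is obtained from $F\times I$ by attaching $2$- and $3$-handles to $F\times\{0\}$, that $\partial_v^0X=(\partial F)\times I$, that $\partial_v^+X$ and $\partial_v^-X$ are disjoint families of annuli in $F\times\{1\}$, that $\partial_h^+X=\overline{F\times\{1\}\setminus(\partial_v^+X\cup\partial_v^-X)}$, and that $\partial_h^-X=\overline{\partial X\setminus(F\times\{1\}\cup\partial_v^0X)}$. We may assume the annuli of $\partial_v^+X\cup\partial_v^-X$ are disjoint from $(\partial F)\times\{1\}$, so that their boundary circles lie in $\partial_h^+X$; this is condition (3). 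Also, since the $2$- and $3$-handles are attached in the interior of $F\times\{0\}$, the surface $\partial_h^-X$ has $(\partial F)\times\{0\}$ as its boundary, while $(\partial F)\times\{1\}\subset\partial_h^+X$. Hence each annulus of $\partial_v^0X=(\partial F)\times I$ has one boundary circle in $\partial_h^-X$ and the other in $\partial_h^+X$, which is condition (2); and then no annulus of $\partial_vX$ has both boundary circles in $\partial_h^-X$, which is condition (1). Condition (4), that $\partial^+X$ is connected, is exactly Lemma~\ref{Lconnected}.

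The only condition requiring an argument is (5), and this is where the dual disks come in. First I would fix an annulus $A$ in $\partial_v^-X$ and a dual disk $D_A$ of $A$. By Definition~\ref{Drelative compression body}, $\partial D_A\subset F\times\{1\}$, the set $A\cap\partial D_A$ is a single essential arc $a$ of $A$, and $\partial D_A$ is disjoint from every other annulus of $\partial_v^-X$. Let $\alpha=\overline{\partial D_A\setminus a}$ be the arc complementary to $a$ in the circle $\partial D_A$. Since $a$ is an essential arc of the annulus $A$, its two endpoints lie on the two distinct boundary circles of $A$; hence the endpoints of $\alpha$ do too, so $\alpha$ connects the two boundary circles of $A$. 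Because $\partial D_A$ meets $A$ only in $a$ and misses the remaining annuli of $\partial_v^-X$, the interior of $\alpha$ is disjoint from $\partial_v^-X$; since moreover the endpoints of $\alpha$ lie in $\partial_h^+X$, we get $\alpha\subset\partial_h^+X\cup\partial_v^+X=\partial^+X$. Finally, whenever $\alpha$ meets an annulus $B$ of $\partial_v^+X$ we have $\emptyset\ne\alpha\cap B=\partial D_A\cap B\subset D_A\cap B$ (using that $a\subset A$ is disjoint from $B$), so the order clause of Definition~\ref{Drelative compression body} forces $o(B)<o(A)$. Thus $\alpha$ is the arc required in condition (5).

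The proof is therefore essentially a matter of unwinding the definitions; the only place where one must be a little careful is the verification in the last step, namely that the arc $\alpha$ extracted from $\partial D_A$ both avoids $\partial_v^-X$ in its interior, so that it lands in the positive boundary rather than straying into the negative boundary, and inherits each of its crossings with $\partial_v^+X$ from a crossing of the whole disk $D_A$, so that the order hypothesis on $D_A$ can be invoked. I do not expect a genuine obstacle here: both facts follow immediately from the two defining properties of a dual disk in Definition~\ref{Drelative compression body}.
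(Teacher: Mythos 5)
Your proof is correct and takes essentially the same route as the paper: conditions (1)--(3) are read off directly from Definition~\ref{Drelative compression body}, condition (4) is Lemma~\ref{Lconnected}, and for condition (5) the paper likewise takes $\alpha=\partial D_A\cap\partial^+X$, i.e.\ the arc of $\partial D_A$ complementary to $A\cap\partial D_A$, exactly as you do. The care you take in checking that $\alpha$ avoids $\partial_v^-X$ and that each crossing of $\alpha$ with $\partial_v^+X$ comes from a crossing of $D_A$ (so the order clause applies) is just an explicit unwinding of what the paper leaves implicit.
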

\begin{proof}
By Lemma~\ref{Lconnected}, $\partial^+X$ is connected, so condition (4) of Definition~\ref{Dadmissible} is satisfied.  For any annulus $A$ in $\partial_v^-X$, let $D$ be its dual disk and let $\alpha=\partial D\cap\partial^+X$.  By our definition, $\alpha$ satisfies  condition (5) of Definition~\ref{Dadmissible}.  Other conditions of Definition~\ref{Dadmissible} follow directly from Definition~\ref{Drelative compression body}.
\end{proof}

In the later sections, for a certain surface $S$ with admissible decompositions as in Definition~\ref{Dadmissible}, we will construct a relative compression body $X$ with $\partial X=S$ and compatible boundary structure as in Lemma~\ref{Ladmissible}.

\section{Type I blocks}\label{StypeI}

In this section, we describe some building blocks of the construction.  Each building block is a topological handlebody with a special structure.  

\vspace{10pt}

\noindent
\textbf{Type I blocks}:  

\vspace{10pt}

Let $X$ be a marked handlebody as above.  Let $F_1,\dots, F_n$ be a collection of mutually disjoint standard surfaces and let $N_i$ be the marked handlebody bounded by $F_i$ (see the definition of standard surface in section~\ref{Srelative compression body}). 
Let $W$ be the closure of $X\setminus\cup_{i=1}^n N_i$ and we call $W$ a {\bf type I block}.  We call $(\cup_{i=1}^n F_i)\cup\partial_hX$ the horizontal boundary of $W$ and denote it by $\partial_h  W$.   
The vertical boundary $\partial_v W$ is the closure of $\partial X\setminus\partial_h W$. So $\partial_v W$ is a collection of subannuli of $\partial_vX$.

\vspace{10pt}

\noindent
\textbf{A basic construction:}

\vspace{10pt}

Let $X$ be a marked handlebody and let $D_1,\dots, D_k$ be a collection of parallel cross-section disks in $X$.  Let $\Gamma_1,\dots,\Gamma_q$ be a collection of disjoint non-nested $\partial$-parallel annuli in $X$ with boundary circles in $\partial_vX$.  We divide the annuli $\Gamma_1,\dots,\Gamma_q$ into $k$ disjoint sets of annuli $\widetilde{\Gamma}_1,\dots, \widetilde{\Gamma}_k$.  Let $F_i$ be the suspension surface over the set of annuli $\widetilde{\Gamma}_i$ and based at the cross-section disk $D_i$ ($i=1,\dots, k$).  
By taking disjoint cross-sections disks, we may assume that $F_1,\dots, F_k$ are disjoint. 
$F_1,\dots F_k$ divide $X$ into $k+1$ submanifolds $W$, $X_1,\dots X_k$, where 
each $X_i$ is the marked handlebody bounded by $F_i$ and $W$ is the type I block between $\partial_hX$ and these $F_i$'s.  
The boundary of $W$ has two parts: 
(1) the horizontal boundary $\partial_hW=\partial_hX\cup F_1\cup\cdots\cup F_k$,  and (2) the vertical boundary $\partial_vW$ consisting of subannuli of $\partial_vX$.

Consider the type I block $W$.  Assign each $F_i$ a plus sign and assign $\partial_hX$ either a plus or a minus sign.  We give each annulus in $\partial_vW$ a numerical order and either a $\pm$-sign or no sign.  We define $\partial_h^\pm W$ and $\partial_v^\pm W$ to be the union of components of $\partial_hW$ and $\partial_vW$ respectively with $\pm$-sign, and let $\partial_v^0W$ be the union of annuli in $\partial_vW$ with no sign.   We require that these surfaces with these signs and orders are admissible (see Definition~\ref{Dadmissible}).

Given any set of annuli in $\partial_v^+W$, we can first arrange the orders of the annuli in this set into a non-decreasing list and then consider the lexicographic order of this list.  
This gives an order on all subsets of annuli in $\partial_v^+W$.  
By condition (4) of Definition~\ref{Dadmissible}, $\partial^+W$ is connected. So there is a set of annuli in $\partial_v^+W$ connecting all the components of $\partial_h^+W$ together. 
We call a set of annuli in $\partial_v^+W$ a {\bf minimal set of annuli} connecting $\partial_h^+W$ if 
\begin{enumerate}
  \item  the union of $\partial_h^+W$ and the annuli in this set is connected and 
  \item  this set of annuli has minimal order among all such sets of annuli.
\end{enumerate}  
One may find it helpful to view the components of $\partial_h^+W$ as vertices and view the annuli in $\partial_v^+ W$ as edges connecting these vertices.  So a minimal set of annuli is necessarily a set of edges that connect these vertices into a tree. Figure~\ref{Fbasic} is a picture of two suspension surfaces $F_1$ and $F_2$ connected by an annulus $A_1$. 

\vspace{10pt}

We have two situations depending on the sign of $\partial_h X$:

If $\partial_h X$ has a minus sign, then $\partial_h^+W=F_1\cup\cdots\cup F_k$. 
As $\partial^+ W$ is connected, there is a minimal set of $k-1$ annuli $\{x_1,\dots, x_{k-1}\}$ in $\partial_v^+W$ connecting $F_1,\dots, F_k$ together.  
We may name the annuli $x_i$'s and the surfaces $F_1,\dots, F_k$ (or arrange the subscripts) so that
\begin{enumerate}
  \item $x_1$ has the smallest order among all the annuli in $\partial_v^+W$ that connect two distinct surfaces in $\{F_1,\dots,F_k\}$, and assume $F_1$ is attached to $x_1$. 
  \item $x_i$ connects $F_1\cup\cdots\cup F_i$ to $F_{i+1}$ for all $i=1,\dots, k-1$ 
  \item for each $i=1,\dots, k-1$,  $x_i$ has the smallest order among the annuli in the set $\{x_1,\dots, x_{k-1}\}$ that connect $F_1\cup\cdots\cup F_i$ to some $F_j$ with $j>i$.
\end{enumerate}

If $\partial_h X$ has a plus sign, then $\partial_h^+W=\partial_hX\cup F_1\cup\cdots\cup F_k$.  
So there is a minimal set of $k$ annuli $\{x_1,\dots, x_{k}\}$ in $\partial_v^+W$ connecting $\partial_hX, F_1,\dots, F_k$ together.  
Similar to the case above, we may name the annuli $x_i$'s and the surfaces $F_1,\dots, F_k$ (or arrange the subscripts) so that   
\begin{enumerate}[(i)]
  \item $x_1$ has the smallest order among all the annuli in $\partial_v^+W$ that connect two distinct surfaces in $\{\partial_hX, F_1,\dots,F_k\}$, and assume $F_1$ is attached to $x_1$.
  \item Let $\widehat{G}_i$ be the subset of the surfaces in $\{\partial_h X, F_1,\dots, F_k\}$ attached to the annuli $x_1,\dots, x_{i}$.  
The union of $x_1,\dots, x_{i}$ and the surfaces in $\widehat{G}_i$ is connected for each $i$, and the indices/subscripts are chosen so that, for any $F_p\in \widehat{G}_i$ and $F_q\notin \widehat{G}_i$,  $p<q$.
  \item for each $i=2,\dots, k$,  $x_i$ has the smallest order among all the annuli in $\{x_1,\dots, x_{k}\}$ that connect a surface in $\widehat{G}_{i-1}$  to a surface in $\{\partial_hX, F_1,\dots,F_k\}\setminus\widehat{G}_{i-1}$.
\end{enumerate}

In both cases, we say that these surfaces and cross-section disks are \textbf{well-positioned} if these $F_i$'s and $x_i$'s satisfy the respective conditions above and the cross-section disks $D_i$ and $D_{i+1}$ are adjacent for each $i$. 
Note that one can construct a suspension surface $F_i$ using an arbitrary cross-section disk, and different choices of cross-section disks do not affect whether or not the surfaces in $\partial W$ (with the fixed signs and orders of the $F_i$'s and $x_i$'s) are admissible, see Definition~\ref{Dadmissible}.

\begin{figure}[h]
	\begin{center}
		\begin{overpic}[width=3in]{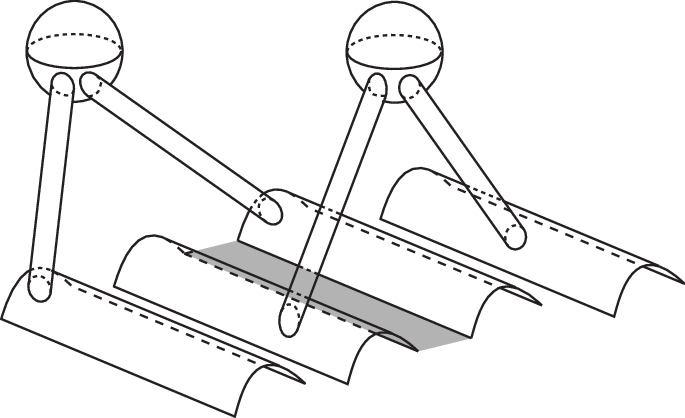}
			\put(18,55){$F_1$}
			\put(64,55){$F_2$}
			\put(63,8){$A_1$}
		\end{overpic}
%		\vspace{3pt}
		\caption{Connect two suspension surfaces with an annulus}\label{Fbasic}
	\end{center}
\end{figure}

\begin{lemma}\label{LW2}
Let $X$, $W$, $F_1,\dots, F_k$ be as above. 
In particular, each $F_i$ is a suspension surface in $X$ with a plus sign.  
Suppose $\partial_hX$ is a planar surface and has a minus sign. Suppose the signs and orders of the surfaces in $\partial W$ are admissible. Let $\{x_1,\dots, x_{k-1}\}$ be a minimal set of annuli connecting $F_1,\dots, F_k$ and  suppose these surfaces and the cross-section disks are well-positioned.  Then $W$ is a relative compression body with respect to these signs and orders.
\end{lemma}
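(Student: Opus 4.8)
The plan is to verify Definition~\ref{Drelative compression body} for $W$ with the prescribed signs and orders. Following the strategy used at the end of the proof of Lemma~\ref{Ldivide}, it suffices to (i) produce a dual disk $D_A$ for every annulus $A$ of $\partial_v^-W$, with $\partial D_A\subset A\cup\partial^+W$ meeting $A$ in a single essential arc and meeting only annuli of $\partial_v^+W$ of order smaller than $o(A)$, and (ii) show that maximally compressing $\partial^+W$ inside $W$ yields a product neighborhood of $\partial^-W=\partial_hX\cup\partial_v^-W$. First I would record the ambient picture. Since $\partial_hX$ is planar, the marked handlebody $X$ has no extra $1$-handles $H_j'$, each $F_i$ is a planar suspension surface tubed off a cross-section disk $D_i$, and $W$ is the topological handlebody obtained from $X$ by deleting the marked handlebodies $N_1,\dots,N_k$ bounded by $F_1,\dots,F_k$. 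Because the $F_i$ and the cross-section disks are well-positioned, the $D_i$ are consecutive parallel copies, and $\partial^+W=(F_1\cup\dots\cup F_k)\cup\partial_v^+W$ is connected by admissibility condition (4).

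For (i), fix $A\subset\partial_v^-W$, which is a subannulus of one of the marked annuli $A_\ell$ of $X$. Admissibility condition (5) gives an arc $\alpha\subset\partial^+W$ joining the two boundary circles of $A$ and meeting only annuli of $\partial_v^+W$ of order $<o(A)$. Together with an essential spanning arc of $A$, $\alpha$ bounds an embedded disk $D_A$ in $W$: start from the compressing disk $\tau_\ell$ of the marked handlebody $X$, which meets $A_\ell$ in a single essential arc and is disjoint from the other marked annuli, cut it along $F_1\cup\dots\cup F_k$, take the component incident to $A$, and isotope its free boundary arc onto $\alpha$; the crossings with $\partial_v^+W$ can be pushed down to order $<o(A)$ precisely because the $x_i$ were chosen by the well-positioned (spanning-tree) rule. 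The same cut-and-paste moves as in Remark~\ref{Rproperty}(1)--(2) then make the $D_A$ mutually disjoint and disjoint from the other annuli of $\partial_v^-W$.

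For (ii), I would argue as in the last part of the proof of Lemma~\ref{Ldivide}. Double each $D_A$ with a band around $A$ as in Figure~\ref{Fmono}(a) to get honest compressing disks of $\partial^+W$; performing these compressions in order of increasing $o(A)$ turns every annulus of $\partial_v^-W$ into a $\partial$-parallel annulus, which may then be set aside. Next, compress $\partial^+W$ along the disks supplied by the cross-section disks $D_1,\dots,D_k$ and by doubled disks around the annuli of $\partial_v^+W$ (the non-tree annuli first, then the tree annuli $x_1,\dots,x_{k-1}$ in the order prescribed by the well-positioned condition, $x_1$ being the smallest-order annulus joining two distinct $F_i$'s, and so on). Each such move either kills a handle of $W$ or amalgamates two suspension surfaces while deleting one annulus, and because $\partial_hX$ is planar the process terminates with no residual genus, leaving the product $\partial_hX\times I$; restoring the $\partial$-parallel annuli of $\partial_v^-W$ gives $\partial^-W\times I$, as required. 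Hence $W$ is a relative compression body.

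The step I expect to be the main obstacle is the order bookkeeping inside (i) and the first half of (ii): one must ensure that the disk $D_A$, and its double, can be isotoped to meet $\partial_v^+W$ only in annuli of strictly smaller order, so that the compressions can genuinely be carried out in nondecreasing order of $o(\cdot)$ with no annulus ever obstructing a lower-order one. This is exactly what admissibility condition (5), together with the minimal-set (spanning-tree) choice of the $x_i$, is designed to guarantee, and carrying out this combinatorial verification — rather than the topological fact that the compressions collapse $W$ to a product — is where the real work lies.
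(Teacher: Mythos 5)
Your outline matches the statement of Definition~\ref{Drelative compression body} (dual disks for $\partial_v^-W$ plus the collapse of $\partial^+W$ onto $\partial^-W$), but the central step is missing, and you in fact flag it yourself: the construction of a dual disk $D_A$ meeting only annuli of $\partial_v^+W$ of order smaller than $o(A)$ is asserted, not proved. Admissibility condition (5) only supplies an \emph{arc} $\alpha\subset\partial^+W$ joining the two circles of $\partial A$; it says nothing about $\alpha$ (together with a spanning arc of $A$) bounding an embedded disk in $W$. Your recipe --- cut the disk $\tau_\ell$ of the marked handlebody along $F_1\cup\dots\cup F_k$, take the piece incident to $A$, and ``isotope its free boundary arc onto $\alpha$'' --- does not work as stated: the piece of $\tau_\ell$ incident to $A$ has boundary running over the suspension surfaces and over other subannuli of $A_\ell$ (which may lie in $\partial_v^-W$, $\partial_v^0W$, or high-order annuli of $\partial_v^+W$), and one cannot in general drag the boundary arc of an embedded disk onto a prescribed arc in $\partial^+W$ while keeping it the boundary of a disk. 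The assertion that ``the crossings with $\partial_v^+W$ can be pushed down to order $<o(A)$ precisely because the $x_i$ were chosen by the well-positioned rule'' is exactly the content of the lemma, and it is not automatic.

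The paper's proof does this work explicitly, and by a different mechanism than condition (5): when $\partial A$ lies in two surfaces $F_s,F_t$, it performs tube slides (using the fact that consecutive cross-section disks are adjacent) to merge the suspension surfaces joined by the tree annuli into a single suspension surface, in which the standard dual-disk picture of Figure~\ref{Fdual}(a) applies; the annuli the resulting disk crosses are precisely the tree annuli used in the merge. When some tree annulus $x_j$ in the relevant chain has $o(x_j)\ge o(A)$, the paper splits into two subcases: if the two circles of $\partial A$ land in different components of the tree minus $x_j$, condition (5) is used to produce a cheaper connecting annulus and contradict minimality of $\{x_1,\dots,x_{k-1}\}$; otherwise the greedy ordering condition (3) on the well-positioned tree shows that $x_{j+1},\dots,x_{t-1}$ already connect a subcollection $H'$ containing both $F_s$ and $F_t$, and the merge is performed inside $H'$ so that only annuli of order $<o(A)$ are crossed. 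None of this case analysis, nor the tube-slide construction that actually produces the disk, appears in your proposal, so as it stands the proof has a genuine gap at its key step. (Your step (ii) is also loose --- cross-section disks are not themselves compressing disks of $\partial^+W$ in $W$ --- but that part is comparatively minor once the single-suspension-surface picture from the tube slides is in hand, which is how the paper concludes using planarity of $\partial_hX$.)
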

\begin{proof}
Since $\partial_hX$ has a minus sign, $\partial_h^+W=\bigcup_{i=1}^k F_i$. 
Let $G_i$ be the union of $x_1,\dots, x_i$ and $F_1,\dots, F_{i+1}$. By  condition (2) on the $F_i$'s and $x_i$'s above, $G_i$ is a connected surface for each $i$.

Let $A$ be any annulus in $\partial_v^-W$.  
We first construct a dual disk for $A$. 
If both components of $\partial A$ lie in the same surface $F_j$, then $A$ has a dual disk $\Delta_A$, see the shaded disk in Figure~\ref{Fdual}(a), and $\partial \Delta_A$ does not intersect any annulus in $\partial_v^+W$.

\begin{figure}[h]
\begin{center}
\begin{overpic}[width=5in]{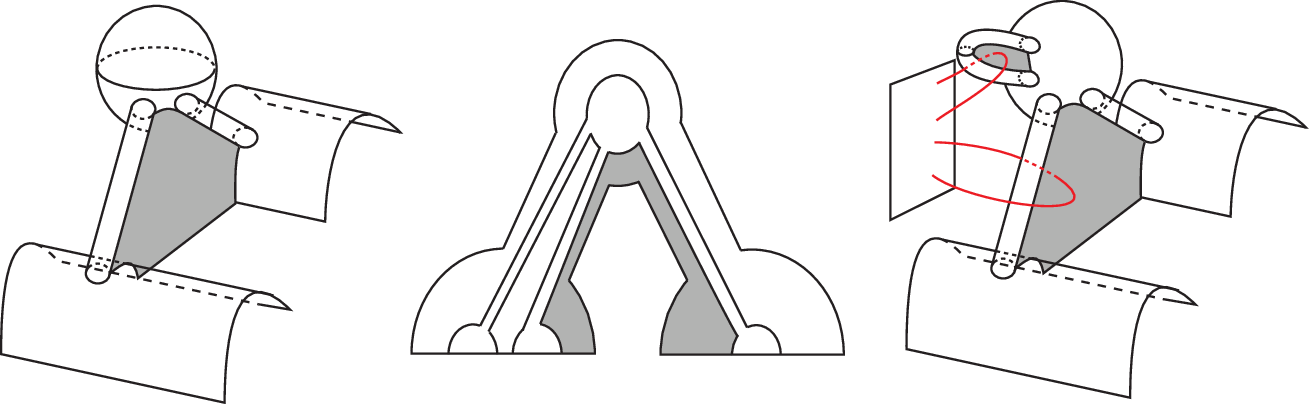}
\put(10,-3){(a)}
\put(46,-3){(b)}
\put(79,-3){(c)}
\put(18,11){$A$}
\put(52,1.4){$A$}
\put(51, 5){$\Delta_A$}
\put(43,2.2){$x_q$}
\put(82.5, 17){$\Delta_i$}
\put(73, 14.5){$\alpha_i$}
\put(73.8, 22){$\alpha_j$}
\put(64, 26){$\partial_h X_0$}
\end{overpic}
\vspace{6pt}
\caption{Dual disks}\label{Fdual}
\end{center}
\end{figure}

Suppose the two components of $\partial A$ lie in two different surfaces $F_s$ and $F_t$ with $s<t$.  By the construction of $G_i$, $\partial A\subset \partial G_{t-1}$.   We have two cases to discuss:

\vspace{5pt}

\noindent
{\it Case (1)}.  The order $o(A)$ is larger than the orders of all the annuli $x_1,\dots, x_{t-1}$ in $G_{t-1}$

\vspace{5pt}

Recall that each $F_i$ is constructed by connecting a collection of $\partial$-parallel annuli to its central 2-sphere along arcs in the cross-section disk $D_i$.  Denote the central  2-sphere of $F_i$ by $S_i$. 

Next we perform some isotopies on $W$.  Start with $F_1$, $F_2$, and $x_1$. Recall that $x_1$ connects $F_1$ to $F_2$.  
Let $\Gamma_1$ and $\Gamma_2$ be the two $\partial$-parallel annuli in the construction of $F_1$ and $F_{2}$ respectively that are attached to the annulus $x_1$.  So $\Gamma'=\Gamma_1\cup x_1\cup\Gamma_2$ is an annulus.   
Let $\mathfrak{t}_1$ and $\mathfrak{t}_2$ be the two tubes in $F_1$ and $F_{1}$ that connect $\Gamma_1$ and $\Gamma_2$ to the central 2-spheres $S_1$ and $S_{2}$ respectively.   
Next, we show that $G_1=F_1\cup x_1\cup F_{2}$ can be viewed as a suspension surface. 
To see this, the first step is to push $\Gamma'$ into a $\partial$-parallel annulus in $X$, see the change from Figure~\ref{Fslide}(a) to (b), where the shaded region denotes $x_1$. 
Now view $x_1$ as a subsurface in the interior of $\Gamma'$. 
Then perform a handle/tube slide on $\mathfrak{t}_1$, sliding $\mathfrak{t}_1$ across $x_1$ and then passing over $\mathfrak{t}_2$, which isotopes $\mathfrak{t}_1$  into a tube connecting the two central 2-spheres $S_1$ and $S_{2}$, see the isotopy from Figure~\ref{Fslide}(b) to (c). 
Now $\mathfrak{t}_1$ merges $S_1$ and $S_{2}$ into a single 2-sphere. 
Since the cross-section disks $D_1$ and $D_{2}$ are adjacent, we can then push all the tubes in $F_1$ into a neighborhood of the cross-section disk $D_{2}$. 
This operation changes  $G_1=F_1\cup x_1\cup F_{2}$ into a single suspension surface which we still denote by $G_1$.  
From the viewpoint of $W$, this operation is an isotopy and the annulus $x_1$ is isotoped into a subsurface of the new suspension surface $G_1$.  
Since the cross-section disk $D_1$ and $D_2$ are adjacent, this isotopy does not affect other $F_i$'s.

\begin{figure}[h]
\begin{center}
\begin{overpic}[width=4in]{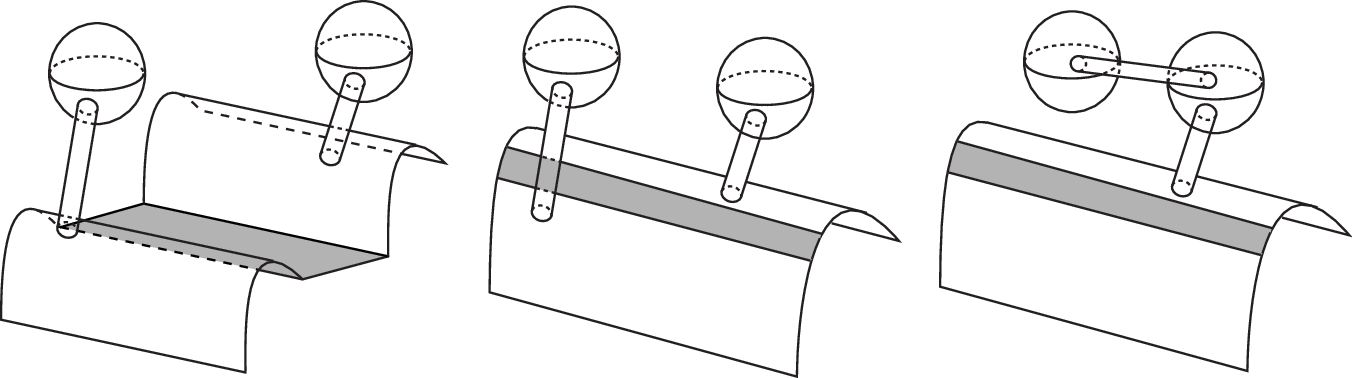}
\put(12,-3){(a)}
\put(48,-3){(b)}
\put(83,-3){(c)}
\put(2.5,15){$\mathfrak{t}_1$}
\put(83.5,24){$\mathfrak{t}_1$}
\put(56,14.7){$\mathfrak{t}_2$}
\put(89.5,15){$\mathfrak{t}_2$}
\end{overpic}
\vspace{6pt}
\caption{Slide a tube across an annulus to connect two central spheres}\label{Fslide}
\end{center}
\end{figure}

Note that the annulus $x_{2}$ connects $G_1$ to $F_{3}$. 
Thus the new set of suspension surfaces  $G_1, F_{3},\dots, F_t$ are connected  by $x_{2},\dots, x_{t-1}$.  By repeating the operation above, we can isotope the surface $G_{t-1}$ into a single suspension surface, and the annuli $x_1,\dots, x_{t-1}$ are viewed as subsurfaces of this suspension surface.  Since the two curves in $\partial A$ are both boundary curves of $G_{t-1}$, there is a disk $\Delta_A$, as shown in Figure~\ref{Fdual}(a), which intersects $A$ in a single essential arc.  As the annuli $x_1,\dots, x_{t-1}$ are now viewed as subsurfaces 
of the suspension surface $G_{t-1}$, $\partial\Delta_A$ may intersect $x_1,\dots, x_{t-1}$. 
We may reverse the isotopy and isotope $G_{t-1}$ back to its original position, and this isotopy changes $\Delta_A$ into a disk that possibly intersects the annuli $x_1,\dots, x_{t-1}$ in their original positions. 
Since the order $o(A)$ is larger than the orders $o(x_1),\dots, o(x_{t-1})$ in Case (1), $\Delta_A$ is a dual disk for $A$.  

\vspace{5pt}

\noindent
{\it Case (2)}.  The order $o(A)\le o(x_j)$ for some  $1\le j\le t-1$.

\vspace{5pt}

Without loss of generality, suppose $j$ is the largest index such that $o(A)\le o(x_j)$ and $1\le j\le t-1$. So $o(A)$ is larger than the orders of $x_{j+1},\dots, x_{t-1}$. 

Consider the surface $G_{k-1}$ which is the union of all the suspension surfaces $F_1,\dots, F_k$ and the annuli $x_1,\dots, x_{k-1}$.  $G_{k-1}$ is a connected surface. 
Now remove the annulus $x_j$ from $G_{k-1}$.  Since $\{x_1,\dots, x_{k-1}\}$ is a minimal set of annuli connecting $F_1,\dots, F_k$, $G_{k-1}\setminus x_j$ has two components, which we denote by $H_1$ and $H_2$. Without loss of generality, suppose $F_t\subset H_2$. Recall that $\partial A\subset \partial F_s\cup \partial F_t$ and $s<t$.  We have two subcases:

The first subcase is that $F_s\subset H_1$.  Since the signs and orders of the surfaces in $\partial W$ are admissible, by condition (5) in Definition~\ref{Dadmissible}, there is an arc $\alpha\subset\partial^+W$ connecting the two components of $\partial A$ such that $\alpha$ only intersects annuli in $\partial_v^+W$ with order smaller than $o(A)$.  Since  $o(A)\le o(x_j)$, $\alpha\cap x_j=\emptyset$.  Since $F_s\subset H_1$ and $F_t\subset H_2$,  one component of $\partial A$ lies in $H_1$ and the other component of $\partial A$ lies in $H_2$. 
So $\alpha$ is an arc in $\partial^+W$ connecting $H_1$ to $H_2$.  This means that one can find an annulus $x_j'$ in $\partial_v^+W$ such that (1) $\alpha$ intersects $x_j'$ and (2) $x_j'$ connects $H_1$ to $H_2$.  Since $\alpha$ intersects $x_j'$, by condition (5) in Definition~\ref{Dadmissible}, we have $o(x_j')< o(A)\le o(x_j)$.  By replacing $x_j$ with the annulus $x_j'$, we obtain a new set of annuli with smaller order and connecting the surfaces $F_1,\dots, F_k$, contradicting the hypothesis that $\{x_1,\dots, x_{k-1}\}$ is a minimal set of such annuli.

The second subcase is that $F_s\subset H_2$.  Consider the surface  $G_{j-1}$.  Recall that $G_{j-1}$ is connected and the annulus $x_j$ connects $F_{j+1}$ to $G_{j-1}$.   If there is another annulus $x_p$ ($j<p\le t-1$) that connects  $G_{j-1}$ to $F_{p+1}$, then by condition (3) on the $F_i$'s and $x_i$'s before the lemma, we have $o(x_j)\le o(x_p)$, and this contradicts the assumption at the beginning of Case (2) that $j$ is the largest index such that $o(A)\le o(x_j)$ and $1\le j\le t-1$.  Thus none of $x_{j+1},\dots, x_{t-1}$ are attached to $G_{j-1}$.  This implies that the annuli $x_{j+1},\dots, x_{t-1}$ connect $F_{j+1},\dots, F_t$ together into a connected surface, which we denote by $H'$.  In particular $F_t\subset H'$.  Since  $F_t\subset H_2$ and since $H'$ is connected, we have $H'\subset H_2$.  Since $G_{j-1}$ is connected and $F_s\subset H_2$, we must have  $G_{j-1}\subset H_1$ and $F_s\not\subset G_{j-1}$. This implies that $F_s\subset H'$ and hence $\partial A\subset\partial H'$.

Now we apply the argument in Case (1) on $H'$.  By performing tube slides, we can isotope $H'$ into a single suspension surface which we still denote by $H'$.  The annuli $x_{j+1},\dots, x_{t-1}$ are now subsurfaces of $H'$.  Moreover, since the cross-section disks $D_{j+1},\dots, D_t$ are adjacent to one another, this isotopy does not affect other surfaces $F_i$.  As $\partial A\subset\partial H'$, there is a disk $\Delta_A$, as shown in Figure~\ref{Fdual}(a), which intersects $A$ in a single essential arc.  Note that $x_{j+1},\dots, x_{t-1}$ are subsurfaces of $H'$, so $\partial\Delta_A$ may intersect $x_{j+1},\dots, x_{t-1}$. By our assumption on $j$ at the beginning of Case (2), the order $o(A)$ is larger than the orders of $x_{j+1},\dots, x_{t-1}$.  This means that, after isotope $H'$ back to its original
position, $\Delta_A$ is a dual disk for $A$.

Therefore, in both Case (1) and Case (2), there is a dual disk for any annulus $A$ in $\partial_v^-W$.  Moreover, since the surfaces in $\partial W$ are admissible, no annulus in $\partial_vW$ has both boundary curves in $\partial_h^-W=\partial_hX$.  So every annulus of $\partial_vX$ must contain boundary curves of some $F_i$.  \
Since $G_{k-1}$ can be isotoped into a single suspension surface and since $\partial_hX$ is planar, a maximal compression on $\partial^+W$ in $W$ yields a collar neighborhood of $\partial^-W=\partial_hX\cup\partial_v^-W$.  This means that $W$ is a relative compression body.
\end{proof}
\begin{remark}\label{Rgenus}
Given any relative compression body $W$, by Remark~\ref{Rproperty}(3), if we maximally compress $\partial^+W$, the resulting manifold is a product neighborhood of $\partial^-W$. Thus we always have $g(\partial^+W)\ge g(\partial^-W)$.  This is a major reason that we require $\partial_hX$ to be a planar surface in Lemma~\ref{LW2}.  
\end{remark}

\begin{lemma}\label{LW1}
Let $X$, $W$, $F_1,\dots, F_k$ be as above. 
In particular, each $F_i$ is a suspension surface in $X$ with a plus sign. 
Suppose $\partial_hX$ has a plus sign. Suppose the signs and orders of the surfaces in $\partial W$ are admissible.  Let $\{x_1,\dots, x_{k}\}$ be a minimal set of annuli connecting $\partial_hX, F_1,\dots, F_k$ as above and  suppose these surfaces and the cross-section disks  are well-positioned. Then $W$ is a relative compression body with respect to these signs and orders.
\end{lemma}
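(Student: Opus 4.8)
The plan is to follow the proof of Lemma~\ref{LW2} almost verbatim, with $\partial_h X$ now playing the role of a component of the \emph{positive} horizontal boundary of $W$ rather than of the negative one. Two things come for free. Since every component of $\partial_h W = \partial_h X \cup F_1 \cup \cdots \cup F_k$ carries a plus sign, $\partial_h^- W = \emptyset$, so $\partial^- W = \partial_v^- W$; and then admissibility condition (2) forces $\partial_v^0 W = \emptyset$ as well, so there are no negative horizontal boundary components and no spanning annuli to track. Moreover, by the discussion in Remark~\ref{Rgenus} the planarity hypothesis on $\partial_h X$ in Lemma~\ref{LW2} was needed only to keep $g(\partial^- W)$ from exceeding $g(\partial^+ W)$; here $\partial_h X$ contributes to $\partial^+ W$, not to $\partial^- W$, so its nonplanarity is harmless. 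It therefore suffices to show: (a) $\partial^+ W$ is connected; (b) every annulus of $\partial_v^- W$ has a dual disk meeting only annuli of $\partial_v^+ W$ of strictly smaller order; and (c) a maximal compression of $\partial^+ W$ in $W$ yields a product neighborhood of $\partial_v^- W$. Part (a) is exactly admissibility condition (4).

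For (b), fix $A \subset \partial_v^- W$ and regard $\partial_h X, F_1,\dots,F_k$ as the vertices of the tree with edge set $\{x_1,\dots,x_k\}$ arranged by conditions (i)--(iii) above. The two circles of $\partial A$ lie in two of these vertices; let $P$ consist of the vertices and edges on the path between them (if the two circles lie in a single vertex, $P$ is that vertex and one finds $\Delta_A$ at once as in the opening paragraph of the proof of Lemma~\ref{LW2} and Figure~\ref{Fdual}(a)). As in Cases~(1) and (2) of that proof, I would use the tube slides of Figure~\ref{Fslide}, along the cross-section disks (pairwise adjacent because the $F_i$ and the cross-section disks are well-positioned), to merge all the surfaces on $P$ together with the connecting annuli into a single suspension-type surface, locate the disk $\Delta_A$ of Figure~\ref{Fdual}(a) with $\partial A$ on its boundary, and reverse the isotopy so that $\Delta_A$ meets only the $x_i$ along $P$. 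The Case~(2) bookkeeping -- the subtlety when $o(A) \le o(x_j)$ for some $x_j$ on the path -- carries over unchanged: either replacing $x_j$ by an annulus crossed by the arc $\alpha$ supplied by admissibility condition (5) contradicts minimality of $\{x_1,\dots,x_k\}$, or $\partial A$ already lies in a connected sub-union of surfaces avoiding $x_j$, to which the Case~(1) argument applies. The genuinely new point is that $\partial_h X$ may lie on $P$: then one slides the tubes of the adjacent suspension surfaces out along their cross-section disks onto $\partial_h X$ and absorbs $\partial_h X$ into the merged surface just as one merges two suspension surfaces, using that each cross-section disk terminates on $\partial_h X$; since $\partial_h X$ carries a plus sign, its possible positive genus never enters the search for $\Delta_A$. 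As in Lemma~\ref{LW2}, minimality of $\{x_1,\dots,x_k\}$ together with admissibility condition (5) then forces every $x_i$ met by $\partial\Delta_A$ to have order $< o(A)$, so $\Delta_A$ is a dual disk.

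For (c) I would reproduce the endgame of the proof of Lemma~\ref{LW2}. First compress $\partial_h X$ along the $p$ co-core disks of the extra $1$-handles $H_1',\dots,H_p'$; these disks are disjoint from the planar suspension surfaces $F_i$, hence lie in $W$, and they reduce $\partial_h X$ to a planar surface. After pushing the suspension surfaces $F_1,\dots,F_k$ into $\partial_h X$ along the cross-section disks as in the Case~(1) merging, $\partial^+ W$ becomes a single (standardly embedded) surface, and then $W$ visibly compresses from its positive side onto a collar neighborhood of $\partial^- W = \partial_v^- W$, exactly as in Lemma~\ref{LW2}. Together with (a) and (b), this verifies all the requirements of Definition~\ref{Drelative compression body}.

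The step I expect to be the main obstacle is (b), and within it the case that $\partial_h X$ lies on the path $P$: one must check that $\partial_h X$ can be treated as an extra suspension surface under the tube-slide isotopies along the cross-section disks, and that the ordering and minimality bookkeeping for the annuli $x_1,\dots,x_k$ goes through unchanged with $\partial_h X$ among the vertices. Everything else is a transcription of the proof of Lemma~\ref{LW2}.
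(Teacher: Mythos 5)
Your overall strategy is the paper's, but there are two genuine gaps, and both sit exactly where you predicted the main obstacle. First, the path-based merging is not supported by the well-positioned hypothesis. Being well-positioned only guarantees that $D_i$ and $D_{i+1}$ are adjacent for consecutive indices in the fixed ordering (i)--(iii); two surfaces that are consecutive along your path $P$ need not have adjacent cross-section disks, and merging suspension surfaces whose cross-section disks are separated by the disks of other $F_j$'s forces you to re-base tubes past those intermediate surfaces, which is precisely what the paper's bookkeeping is designed to avoid. The paper never merges along a path: in Case (1) it merges the initial segment $G_t$ in the order $G_1\subset G_2\subset\cdots$, and in Case (2) the sub-union $H'$ is the consecutive block $F_{j+1},\dots,F_t$, so every tube slide of Figure~\ref{Fslide} occurs between adjacent cross-section disks and demonstrably does not disturb the remaining surfaces. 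Your parenthetical ``pairwise adjacent because \dots\ well-positioned'' misreads the definition, and without adjacency the merging isotopy in your step (b) is unjustified.

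Second, ``absorbing $\partial_hX$ into the merged surface just as one merges two suspension surfaces'' is not an available operation: $\partial_hX$ is a subsurface of $\partial X$, possibly of positive genus, and cannot become part of a properly embedded suspension surface. The paper keeps $\partial_hX$ separate: after the slides one is left with a single annulus $x_q$ joining the merged suspension surface to $\partial_hX$ (Subcase (1a)), and the dual disk for an annulus $A$ with one boundary circle on $\partial_hX$ is the one of Figure~\ref{Fdual}(b), which necessarily crosses $x_q$ --- any arc of $\partial\Delta_A$ in $\partial^+W$ joining the two circles of $\partial A$ must cross some annulus of $\partial_v^+W$, since the circles lie in different components of $\partial_h^+W$ and $\partial_v^0W=\emptyset$ --- so the bound $o(x_q)<o(A)$ has to be checked, not bypassed. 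Moreover, when more than one annulus of the minimal set attaches to $\partial_hX$ (which happens whenever $\partial_hX$ is an interior vertex of your path), reducing to a single attaching annulus is the genuinely new content of this lemma: the two-step slide of Figures~\ref{Fxslide1} and \ref{Fxslide2}, moving a tube across $x_p$ onto $\partial_hX$, then across $x_q$ and over the other tube onto the neighboring central sphere, with the short isotopy argument justifying the second slide. Your sketch supplies neither the Figure~\ref{Fdual}(b) dual disks nor these slides, so part (b) is incomplete at its decisive point. (Your preliminary reductions --- $\partial_h^-W=\partial_v^0W=\emptyset$, the irrelevance of the planarity hypothesis of Lemma~\ref{LW2} here, and the Case (2) minimality contradiction via condition (5) of Definition~\ref{Dadmissible} --- are fine, and your step (c) is an acceptable expansion of the paper's terse final paragraph once the merging is done in the paper's order.)
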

\begin{proof}
The proof is similar to the proof of Lemma~\ref{LW2} except that $\partial_hX$ is a component of $\partial_h^+W$ in this lemma.

Let $A$ be any annulus in $\partial_v^-W$.  
We first construct a dual disk for $A$. 
If both components of $\partial A$ lie in the same surface $F_i$, then $A$ has a dual disk $\Delta_A$ as shown in Figure~\ref{Fdual}(a) and $\partial \Delta_A$ does not intersect any annulus in $\partial_v^+W$. 
If both curves of $\partial A$ are boundary curves of $\partial_hX$, then $A$ must be a component of $\partial_vX$ and there is a dual disk $\Delta_A$, as in the proof of Lemma~\ref{LBM1}, disjoint from all the $F_i$'s. 
Now suppose that the two curves of $\partial A$ lie in different surfaces of $\{\partial_h X, F_1,\dots, F_k\}$.

Let $\widehat{G}_i$ be the subset of the surfaces in $\{\partial_h X, F_1,\dots, F_k\}$ attached to $x_1, \dots, x_{i}$.  
Let $G_i$ be the union of $x_1,\dots, x_i$ and the surfaces in $\widehat{G}_i$. 
By condition (ii) on the set of annuli $\{x_1,\dots, x_k\}$ before Lemma~\ref{LW2}, each $G_i$ is a connected surface. 

Suppose $\partial A\subset\partial G_t$ and suppose $t$ is the smallest such index (i.e.~$\partial A\not\subset G_{t-1}$).  Similar to Lemma~\ref{LW2}, we have two cases.

\vspace{5pt}

\noindent
{\it Case (1)}.  The order $o(A)$ is larger than the order of any annulus $x_i$ that lies in $G_{t}$, i.e. $o(A)>o(x_i)$ for any $1\le i\le t$.

\vspace{5pt}

If $\partial_hX$ is not in $\widehat{G}_t$, then the proof is the same as Case (1) of Lemma~\ref{LW2}.  By isotoping $G_t$ into a single suspension surface, we have a cross-section disk $\Delta_A$ for $A$, as shown in Figure~\ref{Fdual}(a).  So, it remains to consider the case that $\partial_hX\subset G_t$, i.e.~$G_t$ is the union of $x_1,\dots, x_t$, $\partial_hX$, and $F_1,\dots, F_t$.

\vspace{5pt}

\noindent
{\it Subcase (1a)}. Exactly one annulus in $\{x_1,\dots, x_t\}$ is attached to $\partial_hX$.

\vspace{5pt}

Suppose $x_q$ ($1\le q\le t$) is the annulus  attached to $\partial_hX$. 
Consider the subset of $t-1$ annuli $\{x_1,\dots, x_t\}\setminus x_q$.
In this subcase, $F_1,\dots, F_t$ are connected by this subset of $t-1$ annuli into a connected surface $F'$. Moreover, we can perform the tube slides as in the proof of Lemma~\ref{LW2}, which isotope $F'$ into a suspension surface.  So, in this subcase, we may view $G_t$ as the union of $F'$, $\partial_hX$, and the annulus $x_q$ between them.  

If both curves of $\partial A$ are boundary curves of $F'$, then as in Case (1) of Lemma~\ref{LW2}, $A$ has a dual disk $\Delta_A$ as shown in Figure~\ref{Fdual}(a).  If one curve of $\partial A$ is a boundary curve of $\partial_hX$ and the other curve of $\partial A$ is a curve in $\partial F'$, then as illustrated in the schematic picture in Figure~\ref{Fdual}(b), $A$ has a dual disk $\Delta_A$ that intersects the annulus $x_q$.   Note that we can isotope $F'$ back to the original position and this isotopy carries $\Delta_A$ into a  disk for $A$ that only intersects the annuli $x_1,\dots x_t$ of $\partial_v^+W$.   Since $o(A)$ is larger than the order of any annulus in $\{x_1,\dots, x_t\}$, $\Delta_A$ is a dual disk for $A$.

\vspace{5pt}

\noindent
{\it Subcase (1b)}.  More than one annulus in $\{x_1,\dots, x_t\}$ is attached to $\partial_hX$.

\vspace{5pt}

Similar to the proof of Lemma~\ref{LW2}, we try to isotope the $F_i$'s in $G_t$ into a single suspension surface. 
As before, for any two adjacent suspension surfaces connected by an annulus $x_i$ ($1\le i\le t$), we perform a tube slide and isotope them into a single suspension surface which contains $x_i$ as a subsurface.  Since more than one annulus in $\{x_1,\dots, x_t\}$ is attached to $\partial_hX$, after some tube slides and isotopies, we arrive at a situation that (1) $x_p$ and $x_q$ connect two suspension surfaces $F'$ and $F''$ to $\partial_h X$ and (2) the cross-section disks of $F'$ and $F''$ are adjacent.  Denote the cross-section disks of $F'$ and $F''$ by $D'$ and $D''$ respectively.

Let $\Gamma_1$ and $\Gamma_2$ be the $\partial$-parallel annuli (in the construction of the suspension surface $F'$ and $F''$) that are attached to $x_p$ and $x_q$ respectively.
Let $\mathfrak{t}_1$ and $\mathfrak{t}_2$ be the tubes connecting $\Gamma_1$ and $\Gamma_2$ to the central 2-spheres of $F'$ and $F''$ respectively.  
Let $H_1=\partial_hX\cup x_p\cup \Gamma_1$.  
As illustrated in the schematic pictures Figure~\ref{Fxslide1}(a, b), we perform an isotopy on $X$ by first pushing $x_p$ into the interior of $X$.  Then, as illustrated in Figure~\ref{Fxslide1}(c),
we slide $\mathfrak{t}_1$ across $x_p$ into a tube (in a neighborhood of the cross-section disk $D'$) connecting $\partial_hX$ to the central sphere of $F'$.  

\begin{figure}[h]
\begin{center}
\begin{overpic}[width=4in]{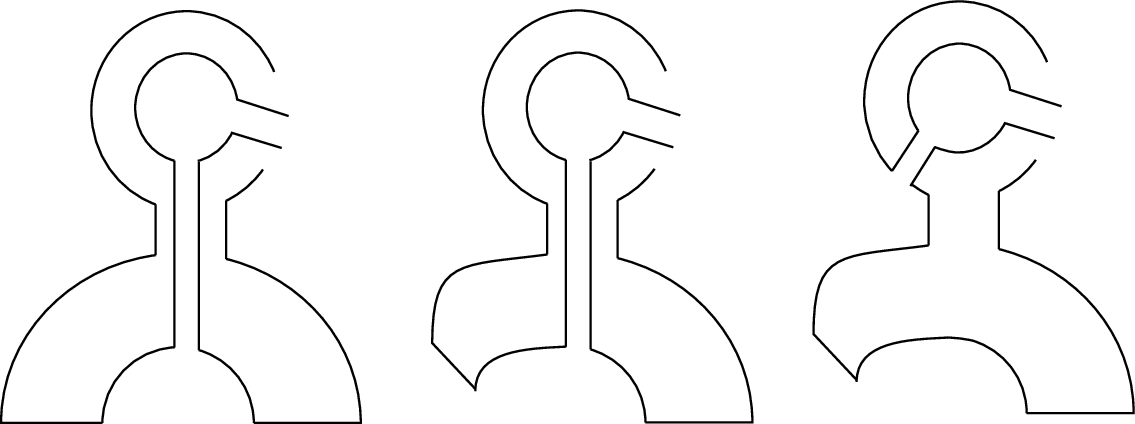}
\put(14,-3){(a)}
\put(50,-3){(b)}
\put(83,-3){(c)}
\put(4,1.5){$x_p$}
\put(12,3){$\Gamma_1$}
\put(12.7,10){$\mathfrak{t}_1$}
\put(47,10){$\mathfrak{t}_1$}
\put(81.5,21){$\mathfrak{t}_1$}
\put(3,14){$\partial_hX$}
\end{overpic}
\vspace{5pt}
\caption{Slide a tube to connect the central sphere to  $\partial_h X$}\label{Fxslide1}
\end{center}
\end{figure}

Next we show that we can slide the tube $\mathfrak{t}_1$ across $x_q$ and pass over $\mathfrak{t}_2$ into a tube connecting the two central spheres of $F'$ and $F''$.  To see this, note that we can similarly slide the tube $\mathfrak{t}_2$ across $x_q$ into a tube connecting $\partial_hX$ to the central sphere of $F''$.  This operation can be viewed as an isotopy of $W$. 
As illustrated in Figure~\ref{Fxslide2}(a, b), we can then slide $\mathfrak{t}_1$ over $\mathfrak{t}_2$ and into a tube connecting the two central spheres for $F'$ and $F''$. 
Since the cross-section disks for $F'$ and $F''$ are adjacent, this tube slide does not affect other suspension surfaces. 
Since these operations are all isotopies, this implies that, without isotoping the tube $\mathfrak{t}_2$ in the first step, we can slide $\mathfrak{t}_1$ across $x_q$  passing over $\mathfrak{t}_2$ into a tube connecting the two central spheres of $F'$ and $F''$.

Now we merge the central 2-spheres of $F'$ and $F''$ together along the tube $\mathfrak{t}_1$.  As in the proof of Lemma~\ref{LW2}, we can merge $F'$ and $F''$ into a single suspension surface.  Thus, after finite many such isotopes, we can merge $F_1,\dots, F_t$ into a single suspension surface $\widehat{F}$, and $\widehat{F}$ is connected to $\partial_hX$ by an annulus $x_q$.  Now the configuration is the same as Subcase (1a), and we can construct a dual disk for $A$ as in Subcase (1a).

\begin{figure}[h]
\begin{center}
\begin{overpic}[width=4in]{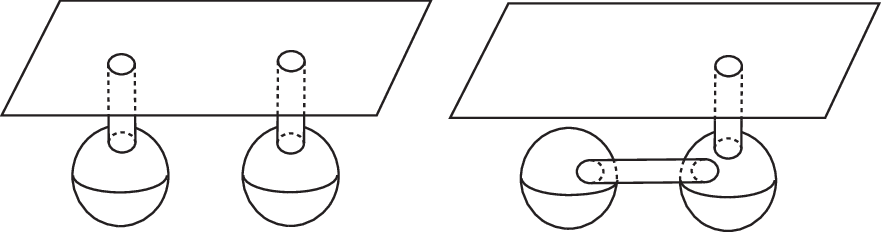}
\put(22,-3){(a)}
\put(72,-3){(b)}
\put(12.5,15){$\mathfrak{t}_1$}
\put(72,9){$\mathfrak{t}_1$}
\put(31.6,15){$\mathfrak{t}_2$}
\put(81.2,15){$\mathfrak{t}_2$}
\put(3,15){$\partial_hX$}
\put(60,15){$\partial_hX$}
\end{overpic}
\vspace{5pt}
\caption{Slide a tube to connect two central spheres}\label{Fxslide2}
\end{center}
\end{figure}

\vspace{5pt}

\noindent
{\it Case (2)}.  The order $o(A)\le o(x_j)$ for some annulus $x_j$ that lie in $G_{t}$, i.e.~$1\le j\le t$.

\vspace{5pt}

Without loss of generality, suppose $j$ is the largest index such that $o(A)\le o(x_j)$ and $1\le j\le t$. 
So $o(A)$ is larger than the orders of $x_{j+1},\dots, x_{t}$. 

Similar to Case (2) of Lemma~\ref{LW2}, consider $G_k$ and remove the annulus $x_j$ from $G_{k}$.  Since the $\{x_1,\dots, x_{k}\}$ is a minimal set of annuli connecting surfaces in $\partial_h^+W$, $G_{k}\setminus x_j$ has two components, denoted by $H_1$ and $H_2$. 

The first subcase is that the one component of $\partial A$ lies in $H_1$ and the other component of $\partial A$ lies in $H_2$.  This subcase is the same as the first subcase of Case (2) in Lemma~\ref{LW2}:
Since the signs and orders of the surfaces in $\partial W$ are admissible, there is an arc $\alpha\subset\partial^+W$ connecting the two components of $\partial A$. By replacing $x_j$ with an annulus that $\alpha$ intersects, as in Case (2) in Lemma~\ref{LW2}, we can obtain a new set of annuli with smaller order and connecting all the  surfaces of $\partial_h^+W$. 
This contradicts the hypothesis that $\{x_1,\dots, x_{k}\}$ is a minimal set of such annuli.

The second subcase is that both components of $\partial A$ lie in the same surface, say $H_2$. 
This subcase is similar to the second subcase of Case (2) in Lemma~\ref{LW2}, except that we have to take $\partial_hX$ into consideration. 

Consider the surface $G_{j-1}$. By the assumption that $\partial A\not\subset G_{t-1}$ before Case (1), the two boundary curves of $A$ cannot both be in $\partial G_{j-1}$.  If there is another annulus $x_p$ ($j<p\le t$) that connect  $G_{j-1}$ to a surface in $\widehat{G}_t\setminus\widehat{G}_{j-1}$, then by condition (iii) on $x_1,\dots, x_k$ before Lemma~\ref{LW2}, we have $o(x_j)\le o(x_p)$, and this contradicts the assumption above that $j$ is the largest index such that $o(A)\le o(x_j)$ and $1\le j\le t$.  
Thus none of $x_{j+1},\dots, x_{t}$ are attached to $G_{j-1}$.  
Let $\widehat{G}'$ be the collection of surfaces in $\widehat{G}_t\setminus\widehat{G}_{j-1}$, and let $H'$ be the union of $x_{j+1},\dots, x_{t}$ and all the surfaces in $\widehat{G}'$.
Similar to the second subcase of Case (2) in Lemma~\ref{LW2},
since none of $x_{j+1},\dots, x_{t}$ are attached to $G_{j-1}$, $H'$ must be a connected subsurface of $G_t$. 
Similarly, this implies that $H'\subset H_2$ and $\partial A\subset\partial H'$. 

Now we apply the argument in Case (1) on $H'$.  By performing tube slides, if $\partial_hX\not\subset H'$, we can isotope $H'$ into a single suspension surface, and if $\partial_hX\subset H'$, we can isotope $H'$ into the form of $\partial_hX\cup x_q\cup H''$, where $H''$ is a single suspension surface.  As in Case (1) above, this means that there is a disk $\Delta_A$ which intersects $A$ in a single essential arc, as illustrated in Figure~\ref{Fdual}(a or b).  
Note that $\partial\Delta_A$ may intersect $x_{j+1},\dots, x_{t}$, but by our assumption on $j$, the order $o(A)$ is larger than the orders of $x_{j+1},\dots, x_{t}$.  This means that, after isotope $H'$ back to its original
position, $\Delta_A$ is a dual disk for $A$.

Therefore, in both Case (1) and Case (2), there is a dual disk for any annulus $A$ in $\partial_v^-W$.  Moreover, since $\partial_h X$ and $F_1,\dots, F_k$ all have plus signs, 
 a maximal compression on $\partial^+W$ in $W$ yields a collar neighborhood of $\partial_v^-W$.  Hence $W$ is a relative compression body. 
\end{proof}

\begin{definition}\label{Dsimilar}
Let $X_1$ and $Y_1$ be two relative compression bodies.  We say that $X_1$ and $Y_1$ are {\bf $\partial$-similar} if there is a homeomorphism $f\colon \partial X_1\to\partial Y_1$ such that 
\begin{enumerate}
  \item $f$ maps $\partial_v^0X_1$, $\partial_v^\pm X_1$ and $\partial_h^\pm X_1$ homeomorphically to $\partial_v^0Y_1$, $\partial_v^\pm Y_1$ and $\partial_h^\pm Y_1$ respectively, and
  \item for each annulus $A$ in $\partial_v^\pm X_1$, $o(A)=o(f(A))$.
\end{enumerate}  
Note that the map $f$ is on the boundary only and it may not extend to a map from $\Int(X_1)$ to $\Int(Y_1)$.  

Let $X$ and $Y$ be two stacks of relative compression bodies.  Suppose $F_1,\dots, F_n$ are the horizontal surfaces in $X$ that divide $X$ into relative compression bodies $X_1,\dots, X_m$, and suppose $S_1,\dots, S_n$ are the horizontal surfaces in $Y$ that divide $Y$ into relative compression bodies $Y_1,\dots, Y_m$.  Suppose $F_i\cong S_i$ for all $i$ and let $h_i\colon F_i\to S_i$ be the homeomorphism.  Moreover, suppose these homeomorphisms $h_i$'s extend to homeomorphisms $f_j\colon \partial X_j\to \partial Y_j$ ($j=1,\dots, m$) which satisfy the two conditions above. In particular, $X_j$ and $Y_j$ are $\partial$-similar for all $j$. Then we say that the two stacks $X$ and $Y$ are {\bf similar}.
\end{definition}

Next, we use Lemma~\ref{LW2} and Lemma~\ref{LW1} to prove the main result on stacks of relative compression bodies.

\begin{lemma}\label{LtypeI}
Let $X$ be a marked handlebody.  Let $Y$ be a stack of relative compression bodies.  Suppose that there is a homeomorphism $f\colon (\partial Y, \partial_vY)\to (\partial X,\partial_vX)$ and suppose every horizontal surface in the stack $Y$ is separating. Then there is a collection of  properly embedded surfaces in $X$ dividing $X$ into a stack that is similar to the stack $Y$. 
\end{lemma}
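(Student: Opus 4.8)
The idea is to reverse-engineer the stack structure on $Y$ onto the handlebody $X$ by an induction on the number of relative compression bodies in $Y$, using the preceding construction of type I blocks, suspension surfaces, and standard surfaces. Since every horizontal surface of $Y$ is separating, the dual graph of the stack $Y$ is a tree, so there is a ``leaf'' relative compression body $Y_1$ in $Y$ that is glued to the rest of $Y$ along a single horizontal surface $S_1$; by relabeling we may take $S_1$ to be a positive or negative horizontal surface of $Y_1$ accordingly. The first step is to realize $Y_1$ inside $X$: using the homeomorphism $f$, transport the admissible boundary data of $Y_1$ (the signs on $\partial_h$, the $\pm$-signs and orders on $\partial_vY_1$, and the decomposition of $\partial X = f(\partial Y)$ induced by $f(S_1)$ together with $f(\partial_vY_1)$) to a piece of $X$. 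Concretely, I would build a standard surface $F$ in $X$ whose complementary type I block $W$ has boundary data $\partial$-similar to $\partial Y_1$: the marked handlebody bounded by $F$ plays the role of the ``rest of $Y$'' side and $W$ plays the role of $Y_1$. Because $Y_1$ is a relative compression body whose positive boundary is connected (Lemma~\ref{Lconnected}) and whose boundary data is admissible (Lemma~\ref{Ladmissible}), one can choose the suspension/standard surface $F$ and a minimal set of connecting annuli $\{x_i\}$ so that the hypotheses of Lemma~\ref{LW2} (if $\partial_hX$, i.e.\ the side corresponding to $\partial_hY_1$ meeting $S_1$, gets a minus sign) or Lemma~\ref{LW1} (if it gets a plus sign) are satisfied; those lemmas then certify that $W$ is a relative compression body with exactly the prescribed signs and orders, hence $\partial$-similar to $Y_1$.

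The second step is the induction. Having split off $W \cong_\partial Y_1$, we are left with $\overline{X \setminus W}$, which is again a topological handlebody (this is recorded right after the definition of standard surfaces: both $N_\Gamma'$ and $X \setminus N_\Gamma'$ are handlebodies), and in fact a marked handlebody with marked annuli the appropriate subannuli of $f(\partial_vY)$ together with the annuli of $\partial F$ meeting the vertical boundary. Meanwhile $\overline{Y \setminus Y_1}$ is a stack of relative compression bodies with one fewer block, still all of whose horizontal surfaces are separating, and the homeomorphism $f$ restricts to a homeomorphism $(\partial(\overline{Y\setminus Y_1}), \partial_v) \to (\partial(\overline{X\setminus W}), \partial_v)$ after we reglue $S_1$ to $f(S_1)$ as a new piece of boundary. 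By the inductive hypothesis we obtain surfaces dividing $\overline{X\setminus W}$ into a stack similar to $\overline{Y\setminus Y_1}$; adjoining $F$ to these surfaces produces the desired collection in $X$, and the matching of boundary data along $F$ versus along $S_1$ (which is part of what ``well-positioned'' and the hypotheses of Lemmas~\ref{LW2}/\ref{LW1} guarantee) ensures the resulting stack in $X$ is similar to $Y$ in the sense of Definition~\ref{Dsimilar}. The base case $m=1$ is the assertion that a single relative compression body with admissible boundary data homeomorphic to that of a marked handlebody is realized by a type I block with no $F_i$'s, which is essentially Lemma~\ref{LBM1} together with Lemmas~\ref{LW2}/\ref{LW1} in the degenerate case $k=0$.

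**Main obstacle.** The delicate point is the realization step: given the abstract relative compression body $Y_1$ with its specified dual-disk orders, one must choose the suspension surface $F$ --- equivalently the non-nested family of $\partial$-parallel annuli $\Gamma_i$ and the tubes through the extra $1$-handles $H_j'$ --- and the labeling of the subannuli of $\partial_vX$ so that the induced data is not merely admissible but actually \emph{matches} $\partial Y_1$ block-by-block, including the orders $o(A) = o(f(A))$ on every vertical annulus. This requires reading off, from the combinatorics of the dual disks of $Y_1$, exactly which annuli should be $\partial$-parallel into which central sphere and with what nesting, and then checking that the well-positioned hypothesis of Lemma~\ref{LW2} or Lemma~\ref{LW1} can be arranged simultaneously with the prescribed orders. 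The separating hypothesis on the horizontal surfaces is what keeps this manageable: it forces the ``rest of $Y$'' to attach along a single connected surface, so that $W$'s positive boundary (which must be connected) can be taken to be exactly $\partial_h^+W = F$, avoiding the need to split a disconnected positive horizontal boundary across several blocks of $X$ at once. I expect verifying compatibility of orders across the gluing surface $F$ — i.e.\ that the order data assigned to $\partial_vW$ to invoke Lemma~\ref{LW2}/\ref{LW1} agrees with the order data pulled back by $f$ from $Y_1$ — to be where most of the real work lies, and it is handled by defining the orders on the $X$ side to be precisely $o(f^{-1}(\cdot))$ and then observing that the minimal-set-of-annuli machinery in those lemmas never needs more than this prescribed order information.
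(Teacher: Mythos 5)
Your plan shares the paper's general strategy (induct on the number of blocks, transport signs and orders through $f$, realize one block as a type~I block certified by Lemmas~\ref{LW1}/\ref{LW2}, then recurse into the pieces cut off), but the way you set up the induction has a genuine structural flaw. The type~I block complementary to a standard surface in $X$ is by construction the piece containing $\partial_hX$, so it can only model the block of $Y$ that contains $\partial_hY$ (the paper's $W_Y$); that block is in general attached to the rest of the stack along \emph{several} horizontal surfaces $S_1,\dots,S_k$, and nothing makes it a leaf of your tree. If instead you peel off a genuine leaf $Y_1$ (one not containing $\partial_hY$), your role assignment is inverted --- $\partial Y_1$ does not contain $\partial_hY$, while $W$ does contain $\partial_hX=f(\partial_hY)$ --- and, worse, $\overline{X\setminus W}$ then has disconnected horizontal boundary ($\partial_hX$ together with $F$), so it is not a marked handlebody and the inductive hypothesis (the lemma itself) cannot be applied to it; you would need a strictly stronger statement, at which point nothing has been gained. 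The paper's recursion is root-first precisely to avoid this: it realizes $W_Y$ as the region between $\partial_hX$ and $k$ suspension/standard surfaces $F_1',\dots,F_k'$ (using Claim~1, which is where the separating hypothesis actually enters --- not, as you assert, to force attachment along a single surface), and each $F_i'$ cuts off an honest marked handlebody $X_i$ matched with the sub-stack $Y_i$ cut off by $S_i$, so the same lemma applies verbatim to each pair.

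The second gap is that the step you flag as routine --- ``choose $F$ and the minimal set of annuli so that the hypotheses of Lemma~\ref{LW2} or \ref{LW1} are satisfied'' --- is exactly where the paper's real work lies, and it cannot simply be arranged by choice. Lemma~\ref{LW2} requires $\partial_hX$ to be \emph{planar} (Remark~\ref{Rgenus}: a relative compression body must satisfy $g(\partial^+)\ge g(\partial^-)$), so when $\partial_hY$ carries a minus sign and $g(\partial_hY)\ge 1$ one must either route tubes of the $F_i'$ through the $1$-handles of $X$ (the paper's Subcase~2a) or, when $\sum_i g(S_i)<g(\partial_hY)$, exploit the extra annuli of $\partial_v^+W_Y$ and drill tunnels dual to compressing disks (Subcase~2b and Remark~\ref{RfixH}); your proposal never confronts this genus constraint. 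Similarly, Lemmas~\ref{LW1} and \ref{LW2} assume every $F_i$ has a plus sign, so they say nothing about the case where some attaching surfaces $S_i$ carry minus signs; the paper handles that (Cases~3 and~4) by a different construction, building the negative surfaces as frontiers of neighborhoods of core graphs via Remark~\ref{Rproperty}(4). By contrast, the order-matching issue you single out as the main obstacle is handled in the paper exactly as you suggest (pull back orders by $f$), so the proposal misidentifies where the difficulty lies and omits the arguments needed to overcome it.
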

\begin{proof}
We prove the lemma using induction on the number of relative compression bodies in the stack $Y$.  As $X$ is a marked handlebody, $\partial_hX$ is connected. 
Since $f\colon (\partial Y, \partial_vY)\to (\partial X,\partial_vX)$ is a homeomorphism, $\partial_hY$ is also connected. 
If the stack $Y$ has only one relative compression body, i.e.~$Y$ itself, since $\partial_hY$ is  connected and since $\partial_h^+ Y\ne\emptyset$ for any relative compression body $Y$, $\partial_hY$ must have a plus sign.  Thus we can assign $\partial_hX$ a plus sign and assign each component of $\partial_vX=f(\partial_vY)$ a sign and an order according to the sign and order of the corresponding component of $\partial_vY$.  By Lemma~\ref{LBM1}, $X$ is $\partial$-similar to $Y$.

Suppose the lemma is true if the number of relative compression bodies in the stack $Y$ is at most $n$. Now suppose $Y$ contains $n+1$ relative compression bodies.

Let $W_Y$ be the relative compression body in the stack $Y$ that contains $\partial_hY$.  Thus $\partial_hY$ is a component of $\partial_hW_Y$.  
Let $S_1,\dots,S_k$ be the components of $\partial_h W_Y\setminus\partial_hY$.  
So each $S_i$ is a properly embedded separating surface in $Y$. Let $y_1,\dots, y_p$ be the annuli in $\partial_vW_Y$. The complement $\overline{\partial_v Y-\cup_{i=1}^p y_i}$ is a collection of annuli, which we denote by $y_1',\dots, y_q'$.  Each $y_i'$ ($i=1,\dots, q$) lies outside $W_Y$.

\begin{claim}\label{Claim1}
For each $i$, both boundary curves of $y_i'$ belong to the same surface $S_j$ for some $j$. 
\end{claim}
\begin{proof}[Proof of the Claim] 
To see this, we first take an essential arc $\alpha$ of the annulus $y_i'$.  So $\alpha$ lies outside $W_Y$.  
Since $\partial\alpha\subset\partial W_Y$, we can connect the two endpoints of $\alpha$ using an arc properly embedded in $W_Y$.  Now we have a closed curve that intersects each component of $\partial y_i'$ in a single point. Suppose the two curves of $\partial y_i'$ belong to different components of $\partial_hW_Y$, say $S_s$ and $S_t$, then this closed curve intersects $S_t$ in a single point, which means that $S_t$ is non-separating, contradicting our hypothesis.
\end{proof}

Let $x_i=f(y_i)$ and $x_i'=f(y_i')$ be the corresponding annuli in $\partial_v X$.   Let $\Gamma_i$ be a properly embedded $\partial$-parallel annulus in $X$ with $\partial\Gamma_i=\partial x_i'$ ($i=1,\dots, q$).  By Claim~\ref{Claim1}, we can divide these $\Gamma_i$'s into $k$ sets of annuli $\widetilde{\Gamma}_1,\dots, \widetilde{\Gamma}_k$ such that $\Gamma_a\in\widetilde{\Gamma}_j$ if and only if the curves  $f^{-1}(\partial\Gamma_a)$ belong to the same surface $S_j$.    
Let $D_1,\dots,D_k$ be a sequence of parallel cross section disks and let $F_i$ be the suspension surface over the set of annuli $\widetilde{\Gamma}_i$ and based at the cross-section disk $D_i$.  
So a curve $\gamma$ is a boundary component of $F_i$ if and only if $f^{-1}(\gamma)$ is a boundary component of $S_i$.  Note that each suspension surface $F_i$ is planar, so $F_i$ may not be homeomorphic to $S_i$.  Nonetheless, $f(\partial S_i)=\partial F_i$.

Let $W_X$ be the submanifold of $X$ between $\partial_hX$ and these surfaces $F_1,\dots, F_k$. So $W_X$ is a type I block and $\partial_vW_X=\bigcup_{i=1}^p x_i$.  
We give a $\pm$-sign to $\partial_hX$ and each $F_i$ according to the signs of $\partial_hY$ and $S_i$ respectively.  Moreover, we give a sign and an order to each annulus $x_i$ according to the sign and order of $y_i$ ($i=1,\dots, p$).  Since $W_Y$ is a relative compression body, this one-to-one correspondence between surfaces in $\{\partial_hW_Y, \partial_vW_Y\}$ and $\{\partial_hW_X, \partial_vW_X\}$ implies that the surfaces $\partial_hW_X$, $\partial_vW_X$ with the signs and orders  are admissible, see Definition~\ref{Dadmissible}.

We have 4 cases to discuss depending on the signs of $S_1,\dots, S_k$, and $\partial_hY$.

\vspace{10pt}
\noindent
\emph{Case 1}. $S_1,\dots, S_k$ and $\partial_hY$ all have plus signs.
\vspace{10pt}

In this case, $\partial_h^+W_X=\partial_hX\cup F_1\cup\cdots\cup F_k$. 
Let $\{x_1,\dots,x_k\}$ be a minimal set of annuli connecting $\partial_hX$ and $F_1,\dots, F_k$ together, and we may choose these surfaces and cross-section disks $D_1,\dots, D_k$ to be well-positioned.  
By Lemma~\ref{LW1}, $W_X$ is a relative compression body.    

Let $F_i'$ be the standard surface obtained by adding $g(S_i)$ trivial tubes to $F_i$ ($i=1,\dots, k$).  So $F_i'\cong S_i$ for each $i$.  Let $W_X'$ be the submanifold of $W_X$ between $\partial_hX$ and $F_1'\dots, F_k'$.  Since $F_1,\dots, F_k$ and $\partial_hX$ all have plus signs and since $F_i'\cong S_i$, $W_X'$ is a relative compression body $\partial$-similar to $W_Y$.

By our construction, each standard surface $F_i'$ bounds a marked handlebody $X_i$ in $X$.  As each $S_i$ ($i=1,\dots, k$) is separating, $S_i$ cuts off a smaller stack $Y_i$ from $Y$.   By the induction hypotheses, there is a collection of surfaces in each $X_i$ that divides $X_i$ into a stack that is similar to the stack $Y_i$. 
As $W_X'$ is $\partial$-similar to $W_Y$, all these surfaces together divide $X$ into a stack that is similar to $Y$.

\vspace{10pt}
\noindent
\emph{Case 2}.  $S_1,\dots, S_k$ all have plus signs, but $\partial_hY$ has a minus sign.
\vspace{10pt}

In this case, $\partial_h^+W_X=F_1\cup\cdots\cup F_k$. 
Let $x_1,\dots,x_{k-1}$ be a minimal set of annuli connecting the $F_i$'s, and we may choose these surfaces and cross-section disks $D_1,\dots, D_k$ to be well-positioned.  
Note that if $\partial_hY$ is planar, then $\partial_hX$ is planar, and the lemma follows from Lemma~\ref{LW2} and the argument in Case 1.  So we suppose $g(\partial_hX)=g(\partial_hY)\ge 1$.

We may view $X$ as the manifold obtained by adding $g(\partial_hX)$ 1-handles to a ``smaller" marked handlebody $X_0$, where $P_0=\partial_hX_0$ is a planar surface and $\partial_v X_0=\partial_vX$.  We may view $F_1,\dots, F_{k}$ as suspension surfaces in $X_0$. 
 Let $W_0$ be the submanifold of $X_0$ between $P_0$ and $F_1,\dots, F_k$. 
Assign $P_0$ a minus sign. 
Since $P_0$ is a planar surface,  by Lemma~\ref{LW2}, $W_0$ is a relative compression body.

\vspace{10pt}
\noindent
\emph{Subcase 2a}.  $\sum_{i=1}^k g(S_i)\ge g(\partial_hY)$
\vspace{10pt}

Since $\partial_hX\cong\partial_hY$, $\sum_{i=1}^k g(S_i)\ge g(\partial_hX)$.  
Consider the $g(\partial_hX)$ 1-handles attached to $X_0$. 
Next, we add $g(S_i)$ tubes to each suspension surface $F_i$ ($i=1,\dots, k$), changing $F_i$ into a standard surface of genus $g(S_i)$. 
Since $\sum_{i=1}^k g(S_i)\ge g(\partial_hX)$, we can arrange $g(\partial_hX)$ of the total $\sum_{i=1}^k g(S_i)$ tubes going through the $g(\partial_hX)$ 1-handles attached $X_0$, exactly one tube for each 1-handle, and set the remaining $\sum_{i=1}^k g(S_i)-g(\partial_hX)$ tubes as trivial tubes.  Denote the resulting standard surfaces by $F_1',\dots, F_k'$.  So $g(F_i')=g(S_i)$ and by the construction of $F_i$, $F_i'\cong S_i$ for all $i$.  

Let $W_X'$ be the submanifold of $X$ between $\partial_hX$ and $F_1',\dots, F_k'$. So $\partial_h^+W_X'=\bigcup_{i=1}^k F_i'$ and $\partial_h^-W_X'=\partial_hX$. 
Since $W_0$ is a relative compression body, a maximal compression on $\partial^+W_0$ yields a product neighborhood of $\partial^-W_0$.  Since $P_0=\partial_h^-W_0$ and since each 1-handle of $X$ contains exactly one tube of $F_1',\dots, F_k'$, this implies that a maximal compression on $\partial^+W_X'$ yields a product neighborhood of $\partial^-W_X'$. 
Moreover, since $W_0$ is a relative compression body, each annulus of $\partial_v^- W_0$ has a dual disk in $W_0$.  As $\partial_v^- W_0=\partial_v^- W_X'$, each annulus of $\partial_v^- W_X'$ has an induced dual disk in $W_X'$.
Hence, $W_X'$ is a relative compression body $\partial$-similar to $W_Y$. Now the lemma follows from the induction as in Case 1.

\vspace{10pt}
\noindent
\emph{Subcase 2b}.  $\sum_{i=1}^k g(S_i)< g(\partial_hY)$
\vspace{10pt}

As $\partial_hY$ has a minus sign, we have $\partial_h^-W_Y=\partial_hY$ and $\partial_h^+W_Y=\cup_{i=1}^kS_i$.  As in Remark~\ref{Rgenus}, we have $g(\partial^- W_Y)\le g(\partial^+ W_Y)$, which implies that $g(\partial_hY)=g(\partial_h^-W_Y)\le g(\partial^+W_Y)$.  
Hence the hypothesis $\sum_{i=1}^k g(S_i)< g(\partial_hY)$ implies that $\sum_{i=1}^k g(S_i)< g(\partial^+W_Y)$.  
Since $\partial_h^+W_Y=\cup_{i=1}^kS_i$ and $\partial^+W_Y=\partial_h^+W_Y\cup \partial_v^+W_Y$, this inequality is possible because of the annuli in $\partial_v^+W_Y$.

Recall that $\{x_1,\dots,x_{k-1}\}$ is a minimal set of annuli connecting the $F_i$'s and $x_i=f(y_i)$.  So $y_1,\dots, y_{k-1}$ are $k-1$ annuli in $\partial_v^+W_Y$ connecting $S_1,\dots, S_k$ together.  Thus each additional annulus in $\partial_v^+W_Y\setminus\{y_1,\dots, y_{k-1}\}$ contributes an extra genus for $\partial^+W_Y=\partial_h^+W_Y\cup \partial_v^+W_Y$, so we have $g(\partial^+W_Y)=\sum_{i=1}^k g(S_i)+ |\partial_v^+W_Y|-(k-1)$. 
Let $g= |\partial_v^+W_Y|-(k-1)=g(\partial^+W_Y)-\sum_{i=1}^k g(S_i)$.  So $g(\partial^+W_Y)=\sum_{i=1}^k g(S_i)+ g$. Without loss of generality, suppose $y_k,\dots, y_{k+g-1}$ are the $g$ additional annuli in $\partial_v^+W_Y$. Hence $x_k,\dots, x_{k+g-1}$ are $g$ annuli in $\partial_v^+W_X$.

Next, we consider the marked handlebody $X_0$ and the relative compression body $W_0$ constructed at the beginning of Case 2.  

Let $\widehat{F}$ be the union of $x_1,\dots, x_{k-1}$ and the suspension surfaces $F_1,\dots, F_k$.  So $\widehat{F}$ is a connected planar surface.  
Similar to the proof of Lemma~\ref{LW2}, we can perform tube slides and isotope 
$\widehat{F}$ into a suspension surface in $X_0$. 
Next, we view $\widehat{F}$ as a suspension surface and view $F_1,\dots, F_k$ as subsurfaces of $\widehat{F}$.  
As in the proof of Lemma~\ref{LW2}, all the dual disks $\Delta_i$ in this configuration are as shown in Figure~\ref{Fdual}(a).  Since $x_k,\dots, x_{k+g-1}$ are $g$ annuli in $\partial_v^+W_0$, the corresponding disks $ \Delta_k,\dots, \Delta_{k+g-1}$, as in Figure~\ref{Fdual}(a), are $g$ compressing disks for $\partial^+W_0$.

Now we add $g(S_i)$ trivial tubes to each $F_i$ ($i=1,\dots, k$) and denote the resulting surface by $F_i'$.  So $F_i'\cong S_i$.  
As each $F_i$ is a subsurface of the suspension surface $\widehat{F}$, these trivial tubes can also be viewed as trivial tubes for $\widehat{F}$. 
Let $W_0'$ be the submanifold of $W_0$ bounded by $P_0=\partial_hX_0$, $F_1',\dots, F_k'$ and $\partial_vW_0$.  Let $\mathfrak{m}= \sum_{i=1}^k g(S_i)$. So $W_0'$ is obtained from $W_0$ by drilling out $\mathfrak{m}$  trivial tunnels.  Since each $F_i$ has a plus sign, $W_0'$ is also a relative compression body.  Moreover, each trivial tube determines a compressing disk for $F_i'$ in $W_0'$, see the top shaded disk in Figure~\ref{Fdual}(c).  Let $D_1',\dots, D_\mathfrak{m}'$ be the compressing disks in $W_0'$ corresponding to these $\mathfrak{m}$ trivial tubes. 

Let $\omega=g(\partial_hY)-\mathfrak{m}$ ($\mathfrak{m}= \sum_{i=1}^k g(S_i)$).  By the hypothesis $\sum_{i=1}^k g(S_i)< g(\partial_hY)$ in this subcase, we have $\omega> 0$. 
Since $g=g(\partial^+W_Y)-\mathfrak{m}$ and since 
$g(\partial_hY)=g(\partial_h^-W_Y)\le g(\partial^+W_Y)$, we have $g\ge \omega$.  
Note that $g(\partial_hY)=\omega+\mathfrak{m}$.  
Recall that each annulus $x_i$ ($i=k,\dots, g+k-1$) is associated with a disk $\Delta_i$, as illustrated in Figure~\ref{Fdual}(c), and since $x_k,\dots, x_{k+g-1}$ are $g$ annuli in $\partial_v^+W_X$, each $\Delta_i$ is a compressing disk for $\partial^+W_0'$. 
As $g\ge\omega$, we consider the first $\omega$ disks $\Delta_k,\dots,\Delta_{\omega+k-1}$.  

Consider the $\omega+\mathfrak{m}$ compressing disks $\Delta_k,\dots,\Delta_{\omega+k-1}$ and $D_1',\dots, D_\mathfrak{m}'$ for $\partial^+W_0'$ in $W_0'$. Recall that  $g(\partial_hY)=\omega+\mathfrak{m}$. 
Since $P_0=\partial_hX_0=\partial_h^-W_0'$, by Remark~\ref{Rproperty}(4), there are (core) arcs $\alpha_1,\dots,\alpha_{\omega+\mathfrak{m}}$ dual to the $\omega+\mathfrak{m}$ disks $\Delta_k,\dots,\Delta_{\omega+k-1}$ and $D_1',\dots, D_\mathfrak{m}'$ with $\partial\alpha_i\subset\partial_hX_0=P_0$, see Figure~\ref{Fdual}(c), such that after drilling out tunnels along these arcs $\alpha_i$, $W_X=W_0'\setminus\bigcup_{i=1}^{\omega+\mathfrak{m}}N(\alpha_i)$ is still a relative compression body.   

Note that there are $\omega$ disjoint circles $\gamma_1,\dots,\gamma_\omega$ in the central sphere of $\widehat{F}$ such that each $\gamma_j$ intersects the disk $\Delta_{j+k-1}$ exactly once and disjoint from other disks.  Similarly, each of the meridional circles of the trivial tubes meets the corresponding disk $D_i'$ exactly once, see Figure~\ref{Fdual}(c).  
So these arcs $\alpha_i$ basically go around  these circles. 
Moreover, if we ignore $\widehat{F}$, then these $\alpha_i$'s are arcs in a large 3-ball in $X_0$ that contain all the cross-section disks of $F_1,\dots, F_k$.  Thus, $\alpha_1,\dots,\alpha_{\omega+\mathfrak{m}}$ are trivial arcs in the marked handlebody $X_0$. 
 Hence $X=X_0\setminus \bigcup_{i=1}^{\omega+\mathfrak{m}}N(\alpha_i)$ is a marked handlebody with $\partial_vX=\partial_vX_0$ and $g(\partial_hX)=\omega+\mathfrak{m}=g(\partial_hY)$.  
This implies that $W_X$ is a submanifold of the marked handlebody $X$ and $W_X$ is $\partial$-similar to $W_Y$.   
Note that we may isotope $\widehat{F}$ back into the original positions of $F_1,\dots, F_i$ and $x_1,\dots, x_{k-1}$ and the isotopy carries these disks and arcs $\alpha_i$ to a different position. 
By our construction, after this isotopy, each $F_i'$, together with subannuli of $\partial_vX$, bounds a marked handlebody in $X$. 
Now by the induction as in Case 1, we can build a collection of surfaces that divide $X$ into a stack of relative compression bodies that is similar to the stack $Y$.

\begin{remark}\label{RfixH}
In the operation above, we fix $X_0$ and obtain the marked handlebody $X$ by drilling out $g(\partial_hX)$ trivial tunnels from $X_0$.  Instead of drilling tunnels, one can also carry out an equivalent (dual) operation by fixing $X$ while dragging some tubes of $F_i'$ through the $g(\partial_hX)$ 1-handles of $X$, in other words, re-embedding some tube of $F_i'$ into tubes that go through the 1-handles of $X$.  Note that, since the tubes of the suspension surface $F_i$ may be re-embedded into tubes through the 1-handles of $X$, the final surface $F_i'$ may not be a standard surface in $X$.  Nonetheless, each $F_i'$, together with subannuli of $\partial_vX$, still bounds a marked handlebody in $X$, and we can carry out the induction as in Case 1. 
\end{remark}

\vspace{10pt}
\noindent
\emph{Case 3}. $S_1,\dots, S_k$ all have minus signs.
\vspace{10pt}

By the definition of relative compression body, $\partial_h^+W_Y\ne\emptyset$. Hence $\partial_hY$ must have a plus sign and $\partial_h^+ W_Y=\partial_hY$.  

Let $y_1',\dots, y_q'$ be the collection of subannuli in $\partial_vY$ defined before Claim~\ref{Claim1}. 
We first show that each component of $\partial_vY$ contains at most one $y_i'$. 
 If two such annuli, say $y_i'$ and $y_j'$, lie in the same component of $\partial_vY$, then there is a component $y_t$ of $\partial_v W_Y$ between $y_i'$ and $y_j'$ with both boundary circles in $\partial y_i'\cup\partial y_j'$.  
This implies that $y_t$  has both boundary circles in $\bigcup_{i=1}^k \partial S_i$.  Since $S_1,\dots, S_k$ all have negative signs, $y_t$ is an annulus in $\partial_vW_Y$ with both boundary curves in $\partial_h^-W_Y$, contradicting the hypothesis that $W_Y$ is a relative compression body (by Definition~\ref{Drelative compression body}, there is no such annulus).  

Let $x_i'=f(y_i')$ ($i=1,\dots, q$). By the conclusion above, each component of $\partial_vX$ contains at most one $x_i'$. 
Consider the $\partial$-parallel annulus $\Gamma_i$ with $\partial\Gamma_i=\partial x_i'$ defined after Claim~\ref{Claim1}. Let $W_X'$ be the submanifold of $X$ obtained by deleting the solid tori bounded by $\Gamma_i\cup x_i'$ ($i=1,\dots, q$).  So $\partial W_X'$ consists of $\partial_hX$, $\Gamma_1,\dots,\Gamma_q$ and the annuli $x_1,\dots, x_p$, where $x_i=f(y_i)$.  We assign a sign and an order to each $x_i$ according to the sign and order of $y_i$.

As $X$ is a  marked handlebody, if we maximally compress $\partial_hX$ in $X$, the resulting manifold is a collar neighborhood of $\partial_vX$.   
Since each component of $\partial_vX$ contains at most one $x_i'$, if we maximally compress $\partial_hX$ in $W_X'$, we obtain a collection of solid tori, each of which is either a product neighborhood of some  $\Gamma_i$ or a collar neighborhood of a component of $\partial_vX$ that does not contain any $x_i'$.  
Assign a plus sign to $\partial_hX$ and a minus sign to each annulus $\Gamma_i$.  Similar to the proof of Lemma~\ref{LBM1}, $W_X'$ is a relative compression body with $\partial_h^-W_X'=\bigcup_{i=1}^q\Gamma_i$, $\partial_h^+W_X'=\partial_hX$ and $\partial_v W_X'=\bigcup_{i=1}^p x_i$.

By Remark~\ref{Rproperty}(4), there is a graph $Z$ in $W_X'$ connecting all the annuli $\Gamma_i$ together such that $W_X'\setminus N(Z)$ is a product neighborhood of $\partial^+W_X'$.  Moreover, by Remark~\ref{Rproperty}(4), for any nontrivial subgraph $Z'$ of $Z$, $W_X'\setminus N(Z')$ is also a relative compression body.  The graph $Z$ can be constructed to have $k$ disjoint subgraphs $Z_1,\dots Z_k$, such that 
\begin{enumerate}
  \item  each $Z_j$ connects a subset $\widetilde{\Gamma}_j$ of the annuli $\Gamma_i$'s, 
  \item    $\Gamma_a\in\widetilde{\Gamma}_j$ if and only if $\partial\Gamma_a\subset f(\partial S_j)$, $j=1,\dots, k$, 
  \item  let $F_j'$ be the frontier surface of $\widetilde{\Gamma}_j\cup N(Z_j)$ in $W_X'$, then $g(S_j)=g(F_j')$.
\end{enumerate}  
Let $W_X=W_X'\setminus \cup_{j=1}^kN(Z_j)$.  We set $\partial_h^+W_X=\partial_hX$, $\partial_vW_X=\partial_vW_X'$, and $\partial_h^-W_X=\bigcup_{i=1}^k F_i'$.  Conditions (2) and (3) above imply that each $F_j'$ is homeomorphic to $S_j$. So $W_X$ is a relative compression body $\partial$-similar to $W_Y$.  Moreover, each $F_i'$ (together with some subannuli of $\partial_vX$) bounds a marked handlebody in $X$. Thus we can proceed with the induction as in Case 1 and this proves Case 3.

\vspace{10pt}
\noindent
Case 4. $S_1,\dots, S_k$ do not have the same sign.
\vspace{10pt}

The proof for Case 4 is a mix of the proofs for Cases 1, 2, and 3.

Let $y_i$ $S_i$ and $\Gamma_i$ be as above.  Without loss of generality, we may suppose $S_i$ has a plus sign if $i\le t$ and has a minus sign if $t+1\le i\le k$.  We divides these $\partial$-parallel annuli $\Gamma_i$ into $k$ sets of annuli $\widetilde{\Gamma}_1,\dots, \widetilde{\Gamma}_k$ such that $\Gamma_a\in\widetilde{\Gamma}_j$ if and only if $\partial\Gamma_a\subset f(\partial S_j)$ ($j=1,\dots,k$).

Now we temporarily ignore $\widetilde{\Gamma}_{t+1},\dots,\widetilde{\Gamma}_k$ and proceed as in Case 1 and Case 2:  We first construct suspension surfaces $F_1,\dots, F_t$ over the sets of annuli $\widetilde{\Gamma}_1,\dots,\widetilde{\Gamma}_t$ respectively.   
Let $V$ be the submanifold of $X$ bounded by $\partial_hX$, $F_1,\dots, F_t$, and the collection of subannuli of $\partial_vX$ between these surfaces. 
So $\partial_hV=\partial_hX\cup(\bigcup_{i=1}^t F_i)$ and $\partial_v V$ consists of subannuli of $\partial_vX$.     
Denote the annuli in $\partial_vV$ by $x_1,\dots, x_m$, $w_1,\dots,w_s$, where no $x_i$ ($i=1,\dots,m$) contains boundary curves of any annulus in $\{\widetilde{\Gamma}_{t+1},\dots,\widetilde{\Gamma}_k\}$, and each $w_j$ ($j=1,\dots,s$) contains boundary  curves of at least one annulus in $\{\widetilde{\Gamma}_{t+1},\dots,\widetilde{\Gamma}_k\}$.  
Note that since the annuli $\Gamma_i$ are non-nested, 
each annulus in $\{\widetilde{\Gamma}_{t+1},\dots,\widetilde{\Gamma}_k\}$ must have both boundary curves in the same $w_j$ for some $j=1,\dots, s$.  

By our construction, $\partial F_i=f(\partial S_i)$ ($i=1,\dots,t$). We assign $F_1,\dots, F_t$ plus signs and assign $\partial_hX$ a $\pm$-sign according to the sign of $\partial_hY$. 
Note that each annulus $x_i$ ($i=1,\dots, m$) corresponds to a component $y_i$  of $\partial_vW_Y$ ($y_i=f(x_i)$).  
Assign a sign and an order to each $x_i$ according to the sign and order of $y_i$.  

\begin{claim}\label{Claim2}
Each $w_j$ contains the boundary curves of exactly one annulus in $\{\widetilde{\Gamma}_{t+1},\dots,\widetilde{\Gamma}_k\}$, and both curves of $\partial w_j$ are boundary curves of  $\partial_h^+ V$. 
\end{claim}
\begin{proof}[Proof of the Claim]
This proof is similar to the argument in Case 3.  
If two annuli $\Gamma_a$ and $\Gamma_b$ in $\{\widetilde{\Gamma}_{t+1},\dots,\widetilde{\Gamma}_k\}$ have boundary curves in the same $w_j$, then $w_j$ must contain a subannulus $x_c$ between $\Gamma_a$ and $\Gamma_b$ such that   $\partial x_c\subset\bigcup_{i=t+1}^k\widetilde{\Gamma}_{i}$.  Let $y_c = f^{-1}(x_c)$ be the corresponding annulus in $\partial_v W_Y$. 
Since $S_{t+1},\dots, S_k$ have minus signs, this means that $\partial y_c$ has both curves in $\partial_h^-W_Y$.  However, this is impossible because $W_Y$ is a relative compression body and, by Definition~\ref{Drelative compression body}, no annulus in $\partial_vW_Y$ has both boundary curves in  $\partial_h^-W_Y$.

Similarly, if a curve of $\partial w_j$ is a boundary curve of $\partial_h^- V$  (this occurs only if $\partial_h X$ has a minus sign, since $F_1,\dots, F_t$ all have plus signs), then since $w_j$ contains the boundary curves of an annulus in $\{\widetilde{\Gamma}_{t+1},\dots,\widetilde{\Gamma}_k\}$, a subannulus $x_d$ of $w_j$ has one boundary curve in some $\widetilde{\Gamma}_{p}$ ($t+1\le p\le k$) and the other boundary curve in $\partial_h^- V$.  Thus $y_d = f^{-1}(x_d)$ is a component of $\partial_v W_Y$ with both boundary curves in   $\partial_h^-W_Y$, which is impossible. 
\end{proof}

Now we assign each annulus $w_i$ a minus sign and an order larger than the order of any annulus in $\partial_v^+W_Y$, and we view each $w_i$ as an annulus in $\partial_v^-V$. 
Since $\partial^+ W_Y$ is connected (see Lemma~\ref{Lconnected}), the construction of $F_i$ implies that $\partial^+V=\partial_h^+V\cup\partial_v^+V$ is connected.  
Thus the assumptions on $W_Y$ and the $S_i$'s imply that these signs and orders on $\partial V$ are admissible, see Definition~\ref{Dadmissible}.  
As before, we can find a minimal set of annuli connecting the components of $\partial_h^+V$, and we can assume the $F_i$'s and their cross-section disks are well-positioned.  
As in Case 1 and Case 2, we can construct a surface $F_i'$ by modifying $F_i$ and 
 adding $g(S_i)$ tubes to $F_i$ ($i=1,\dots,t$), such that the submanifold $V'$ between $\partial_hX$ and $\cup_{i=1}^t F_i'$ is a relative compression body with each $w_i$ an annulus in $\partial_v^-V'$.  
 
By Claim~\ref{Claim2},  for each annulus $w_i$, there is exactly one annulus in $\{\widetilde{\Gamma}_{t+1},\dots,\widetilde{\Gamma}_k\}$ having boundary in $w_i$. 
Similar to Case 3, let $V''$ be the submanifold of $V'$ obtained by deleting the solid tori bounded by the $\partial$-parallel annuli in $\{\widetilde{\Gamma}_{t+1},\dots,\widetilde{\Gamma}_k\}$ and the corresponding subannuli of $w_1,\dots,w_s$.  Now view the annuli in $\{\widetilde{\Gamma}_{t+1},\dots,\widetilde{\Gamma}_k\}$ as components of $\partial_h^-V''$. 
Similar to Case 3, since each $w_i$ is a component of $\partial_v^- V'$ and since $V'$ is a relative compressing body, 
if we maximally compress $\partial^+V'$ in $V''$, the resulting manifold is a product neighborhood of $\partial^-V''$.  Therefore, $V''$ is a relative compression body (with orders and signs the same as corresponding components of $\partial_h^\pm V'$ and $\partial_v^\pm V'$).  
Note that, if $\partial_hX$ has a plus sign, then $\partial_h^- V''=\bigcup_{i=t+1}^k\widetilde{\Gamma}_{i}$, and if  $\partial_hX$ has a minus sign, then $\partial_h^- V''=(\bigcup_{i=t+1}^k\widetilde{\Gamma}_{i})\cup\partial_hX$.

By Remark~\ref{Rproperty}(4), there is a graph $Z$ in $V''$ connecting all the components of $\partial^- V''$, such that $V''\setminus N(Z)$ is a product neighborhood of $\partial^+V''$. 
Similar to Case 3, there are graphs $Z_{t+1},\dots, Z_k$ in $V''$ such that 
\begin{enumerate}
  \item  each $Z_j$ connects all the annuli in $\widetilde{\Gamma}_j$, and 
  \item    let $F_j'$ be the frontier surface of $\widetilde{\Gamma}_j\cup N(Z_j)$ in $V''$ ($j=t+1,\dots, k$), then $g(S_j)=g(F_j')$.
\end{enumerate}
Let $W_X$ be the manifold obtained by deleting a small neighborhood of $\cup_{i=t+1}^kZ_i$ from $V''$.  Note that after deleting $N(Z_j)$, the annuli in $\widetilde{\Gamma}_j$ merge into a connected surface $F_j'$ which is homeomorphic to $S_j$ ($j=t+1,\dots, k$).  Since $F_i'\cong S_i$ for $i=1,\dots, t$, this means that $W_X$ is a relative compression body $\partial$-similar to $W_Y$.  Thus we can proceed with the induction and this proves Case 4.
\end{proof}

\section{Separating surfaces}\label{Ssep}

Let $M=\mathcal{W}\cup_\mathcal{T} \mathcal{V}$ and $N=\widehat{\mathcal{T}}\cup_\mathcal{T} \mathcal{V}$ be as in Theorem~\ref{Ttorus}, where $H^1(\mathcal{W})=\mathbb{Z}$ and $\widehat{\mathcal{T}}$ is a solid torus.  
We view $\mathcal{V}$ as a submanifold of both $M$ and $N$. 
Let $\Sigma$ be the collection of incompressible and strongly irreducible surfaces in an untelescoping of a minimal genus Heegaard splitting of $M$.  By Lemma~\ref{Ldivide} and Remark~\ref{Rtorus}, we may assume that $\Sigma$ is in good position with respect to $\mathcal{T}$.

First, we would like to point out that we only need to consider the case that $\Sigma$ has no component entirely in $\mathcal{W}$.  
If $\Sigma $ has a strongly irreducible surface entirely in $\mathcal{W}$, then since $\mathcal{T}$ is incompressible,  by maximally compressing this component on either side, we obtain an incompressible component of $\Sigma$ entirely in $\mathcal{W}$.  
For homology reasons, each closed surface in $\mathcal{W}$ is separating.  
Thus there is a collection of incompressible components $\mathcal{F}_1,\dots, \mathcal{F}_n$ of $\Sigma$ entirely in $\mathcal{W}$, such that (1) each $\mathcal{F}_i$ bounds a submanifold $Z_i$ of $\mathcal{W}$, (2) these $Z_i$'s are non-nested, and (3) $\Sigma$ has no component entirely in $\mathcal{W}\setminus\bigcup_{i=1}^n Z_i$.  
Let $\Sigma'$ be the components of $\Sigma$ lying outside $\bigcup_{i=1}^n Z_i$.  So no component of $\Sigma'$ is entirely in $\mathcal{W}$.  Since $\mathcal{F}_i$ is an incompressible component in the untelescoping,  $\Sigma\cap \Int(Z_i)$ gives a generalized Heegaard splitting of $Z_i$. 
Let $\mathcal{A}_i$ be the compression body in the untelescoping that lies outside $Z_i$ and with $\mathcal{F}_i\subset\partial_-\mathcal{A}_i$. Let $\mathcal{P}_i=\partial_+\mathcal{A}_i$. So $\mathcal{P}_i$ is a component of $\Sigma'$. 
By rearranging handles in $Z_i$ (as a converse of the untelescoping), we first convert $\Sigma\cap \Int(Z_i)$ into a Heegaard surface of $Z_i$. Then we amalgamate this Heegaard surface of $Z_i$ with $\mathcal{P}_i$ along $\mathcal{F}_i$, see \cite{Sch}.  As in \cite{Sch}, also see \cite{La2,L4,L14}, this amalgamation operation is basically using a tube across $\mathcal{F}_i$ to connect $\mathcal{P}_i$ with a closed surface in $Z_i$. In particular, it does not affect how these surfaces intersect $\mathcal{T}$. 
We perform this amalgamation operation for each $Z_i$ and let $\Sigma''$ be the resulting collection of surfaces. $\Sigma''$ decomposes $M$ into a generalized Heegaard splitting with the same genus (it can be viewed as rearranging the same set of handles).  
Moreover, no component of $\Sigma''$ is entirely in $\mathcal{W}$.  This amalgamation operation is away from the torus $\mathcal{T}$, so the intersection of $\mathcal{T}$ with each compression body is still a collection of incompressible annuli. Hence it follows from the proof of Lemma~\ref{Ldivide} that $\Sigma''$ is still in good position with respect to $\mathcal{T}$.  
We can continue the proof using $\Sigma''$ instead of $\Sigma$ and the proof is the same. 

By the argument above, we may 
suppose that $\Sigma$ has no component entirely in $\mathcal{W}$.

For homology reasons, there is a nontrivial simple closed curve in $\mathcal{T}$ that is null-homologous in $\mathcal{W}$ (this curve bounds a non-separating Seifert surface in $\mathcal{W}$).  
Choose a framing for $H_1(\mathcal{T})$ by setting the slope of this curve to be $1/0$ (or $\infty$).  We also call this slope a meridional slope.  Note that $N$ is obtained from $\mathcal{V}$ by a Dehn filling along the $\infty$-slope.

In this section, we prove Theorem~\ref{Ttorus} in the case that every component of $\Sigma\cap\mathcal{W}$ is separating in $\mathcal{W}$. 
Suppose every component of $\Sigma\cap\mathcal{W}$ is separating in $\mathcal{W}$. We have 3 cases to discuss.

\vspace{10pt}

\noindent

\textbf{Case (a)}.  A component of $\Sigma\cap \mathcal{W}$ has genus at least one.

\vspace{10pt}

Let $\Sigma_0$ be a component of $\Sigma\cap \mathcal{W}$ with $g(\Sigma_0)\ge 1$. 
Since every component of $\Sigma\cap\mathcal{W}$ is separating in $\mathcal{W}$, $\partial\Sigma_0$ has an even number of components.  Let $c_1,\dots, c_{2k}$ be the boundary curves of $\Sigma_0$.  
The curves $c_1,\dots, c_{2k}$ divide $\mathcal{T}$ into $2k$ annuli $A_1,\dots, A_{2k}$. Suppose $A_i$ is adjacent to $A_{i+1}$ for each $i$.  As $\Sigma_0$ is separating in $\mathcal{W}$, 
$A_{2i}$ and $A_{2i-1}$ lie on different sides of $\Sigma_0$ for each $i$.  

Now consider $N= \widehat{\mathcal{T}} \cup_\mathcal{T} \mathcal{V}$ and view $\mathcal{T}$ as the boundary of the solid torus $\widehat{\mathcal{T}}$.

We first construct a surface $\Sigma_0'$ properly embedded in $\widehat{\mathcal{T}}$ such that 
\begin{enumerate}
  \item  $\partial\Sigma_0'=c_1\cup\cdots\cup c_{2k}$, 
  \item  $g(\Sigma_0')=g(\Sigma_0)$ and 
  \item  $\Sigma_0'$ divides the solid torus $\widehat{\mathcal{T}}$ into two marked handlebodies.
\end{enumerate}
The construction is fairly straightforward.  Let $\Gamma_i$ be a $\partial$-parallel annulus in the solid torus $\widehat{\mathcal{T}}$ with $\partial\Gamma_i=\partial A_{2i}$ ($i=1,\dots, k$).  Let $N_i$ be the solid torus bounded by $\Gamma_i\cup A_{2i}$.  
Let $c$ be a core curve of the solid torus $\widehat{\mathcal{T}}$ and suppose $c$ lies outside each $N_i$.  
So $\widehat{\mathcal{T}}\setminus N(c)\cong T^2\times I$. 
Let $J=\alpha\times I$ be a vertical annulus in  $\widehat{\mathcal{T}}\setminus N(c)\cong T^2\times I$ and we  choose the slope of $\alpha$ so that $J$ meets each curve $c_i$ in a single point. 
Hence $J\cap N_i$ is a bigon meridional disk of $N_i$. 
Let $h_1,\dots h_k$ be a collection of 1-handles connecting $\overline{N(c)}$  to $N_1,\dots, N_k$ respectively along vertical arcs in the vertical annulus $J$.  Let $X_0=\overline{N(c)}\cup (\bigcup_{i=1}^k (N_i\cup h_i)) $.  
So $X_0$ is a marked handlebody with $\partial_v X_0=\cup_{i=1}^k A_{2i}$ and $\partial_hX_0$ a genus-one surface properly embedded in $\widehat{\mathcal{T}}$.  
Since these 1-handles $h_i$ are unknotted, the closure of $\widehat{\mathcal{T}}\setminus X_0$ is a marked handlebody whose vertical boundary is $\cup_{i=1}^k A_{2i-1}$.  

Now we add $g$ trivial 1-handles to $X_0$, where $g=g(\Sigma_0)-1$, and denote the resulting manifold by $X_0'$.  Hence $X_0'$ is a marked handlebody with $\partial_h X_0'\cong\Sigma_0$.  
Let $X^c$ be the closure of the $\widehat{\mathcal{T}}\setminus X_0'$.
Since these $g$ 1-handles are trivial 1-handles, $X^c$ is a marked handlebody with $\partial_v X^c=\cup_{i=1}^k A_{2i-1}$.  
Set $\Sigma_0'=\partial_hX_0'=\partial_hX^c$, so we have $\Sigma_0'\cong\Sigma_0$

$\Sigma_0'$ divides $\widehat{\mathcal{T}}$ into two marked handlebodies and 
$\Sigma_0$ divides $\mathcal{W}$ into two stacks of relative compression bodies.  By applying Lemma~\ref{LtypeI} to each of the two stacks, we can construct a stack of relative compression bodies in $\widehat{\mathcal{T}}$ that is similar to the stack $\mathcal{W}$.  By Lemma~\ref{Lmerge}, we can connect the horizontal surfaces in $\widehat{\mathcal{T}}$ to $\Sigma\cap\mathcal{V}$ and obtain a generalized Heegaard splitting for $N$ with the same genus.  Thus $g(N)\le g(W)$ and Theorem~\ref{Ttorus} holds in Case (a).

\vspace{10pt}

\noindent
\textbf{Case (b)}.  The curves in $\Sigma\cap \mathcal{T}$ have an integer slope. 
\vspace{10pt}

This case is similar to Case (a).  Let $\Sigma_0$ be a component of $\Sigma\cap \mathcal{W}$.  
Since $\Sigma_0$ is separating in $\mathcal{W}$, $\partial\Sigma_0$ has an even number of components.  Let $c_1,\dots, c_{2k}$ be the boundary curves of $\Sigma_0$.  Let $A_i$ ($i=1,\dots, 2k$), $\Gamma_j$ and $N_j$  ($j=1,\dots, k$) be as in Case (a). 

Consider $N= \widehat{\mathcal{T}} \cup_\mathcal{T} \mathcal{V}$ and view $\mathcal{T}$ as the boundary of the solid torus $\widehat{\mathcal{T}}$.  
Let $\Delta$ be a compressing disk in the solid torus  $\widehat{\mathcal{T}}$. 
Since the curves in $\Sigma\cap \mathcal{T}$ have an integer slope, $\partial\Delta$ intersects each curve $c_i$ in a single point.
Hence $\Delta\cap N_i$ is a bigon compressing disk of $N_i$.  
Let $O$ be a point in $\Int(\Delta)$ and outside each $N_i$.  Let $h_1,\dots, h_k$ be a collection of 1-handles connecting a 3-ball $N(O)$ to $N_1,\dots, N_k$ respectively along arcs in $\Delta$.  Let $X_0$ be the union of $N(O)$, the solid tori $N_i$'s, and the 1-handles $h_i$'s.  So $X_0$ is a marked handlebody with $\partial_v X_0=\cup_{i=1}^k A_{2i}$. 
Since $\Delta$ is a compressing disk in the solid torus, the closure of $\widehat{\mathcal{T}}\setminus X_0$ is a marked handlebody whose vertical boundary is $\cup_{i=1}^k A_{2i-1}$.  
Then we add $g$ trivial 1-handles to $X_0$, where $g=g(\Sigma_0)$, and denote the resulting manifold by $X_0'$.  Similar to Case (a), $X_0'$ and its complement are both marked handlebodies.  Let $\Sigma_0'=\partial_h X_0'$.  Thus $\Sigma_0'\cong \Sigma_0$. 
Now the proof is the same as Case (a).

\vspace{10pt}

\noindent

\textbf{Case (c)}.  Every component of $\Sigma\cap \mathcal{W}$ is a planar surface, but there is a relative compression body  $Y$ in the stack $\mathcal{W}$ with $g(\partial^+Y)\ge 1$.

\vspace{10pt}

Recall that if one maximally compresses $\partial^+Y$ in $Y$, the resulting manifold is a product neighborhood of $\partial^-Y$.  
As in Remark~\ref{Rproperty}(4), this means that there is a graph $G$ connecting all the components of $\partial^-Y$ such that $Y\setminus N(G\cup \partial^-Y)\cong (\partial^+Y)\times I$.   
Denote $Y'=Y\setminus N(G\cup \partial^-Y)$ and let $\Sigma_0$ be the frontier surface of $\overline{N(G\cup\partial^-Y)}$ in $Y$.  So $\Sigma_0$ is a properly embedded surface in $Y$ and $\Sigma_0\subset\partial Y'$. 
 We may view $Y'$ as a (trivial) relative compression body with $\partial^+Y'=\partial^+Y$ and $\partial_h^-Y'=\Sigma_0$.  As $Y'\cong  (\partial^+Y)\times I$, we have $\Sigma_0\cong\partial^+Y$. By Lemma~\ref{Lconnected}, $\partial^+Y$ is connected. Hence $\Sigma_0$ is connected.  Since $g(\partial^+Y)\ge 1$, $g(\Sigma_0)\ge 1$.

$\Sigma_0$ divides $Y$ into two submanifolds $Y'$ and $Y''$, where $Y''$ is  a neighborhood of $G\cup\partial^-Y$.   If we maximally compress $\Sigma_0$ in $Y''$, the resulting manifold is a product neighborhood of $\partial^-Y$. Hence we may view $Y''$ as a relative compression body with $\partial_h^+Y''=\Sigma_0$ and $\partial^-Y''=\partial^-Y$.  Although $Y'$ and $Y''$ are both relative compression bodies, $Y=Y'\cup_{\Sigma_0}Y''$ is not a stack by definition because $\Sigma_0$ has a minus sign in $Y'$ but has a plus sign in $Y''$.  Nonetheless, the closure of each component of $\mathcal{W}\setminus \Sigma_0$ is a stack.

As $g(\Sigma_0)\ge 1$, we can repeat the construction in Case (a) and construct a surface $\Sigma_0'$ in the solid torus $\widehat{\mathcal{T}}$, such that $\Sigma_0'\cong\Sigma_0$, $\partial\Sigma_0'=\partial\Sigma_0$ in $\mathcal{T}$, and $\Sigma_0'$ divides $\widehat{\mathcal{T}}$ into two marked handlebodies. 
 Now the proof is the same as Case (a): in each of the two marked handlebodies, we apply Lemma~\ref{LtypeI} and construct a collection of surfaces, dividing the marked handlebody into a stack that is similar to the corresponding stack of $\overline{\mathcal{W}\setminus \Sigma_0}$.  After removing $\Sigma_0'$, we obtain a stack $\widehat{\mathcal{T}}$ that is similar to the stack $\mathcal{W}$.  Now the proof is the same as Case (a) and this finishes Case (c).

\vspace{5pt}

\begin{claim*}
Suppose every component of $\Sigma\cap \mathcal{W}$ is a planar surface and, for each relative compression body  $Y$ in the stack $\mathcal{W}$, $\partial^+Y$ is a planar surface.  Then the slope of the curves in $\Sigma\cap \mathcal{T}$ must be an integer.
\end{claim*}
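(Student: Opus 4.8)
The plan is to convert the two planarity hypotheses into a rigid structural description of $\mathcal{W}$ and then read the slope off $H_1$. First I would observe that for any relative compression body $Y$ one has $g(\partial^+Y)\ge g(\partial^-Y)$ (Remark~\ref{Rgenus}), so the assumption that every $\partial^+Y$ is planar forces every $\partial^-Y$ to be planar too; since the horizontal surfaces $\Sigma\cap\mathcal{W}$ are planar by hypothesis, every relative compression body $Y$ in the stack of $\mathcal{W}$ has its entire boundary assembled out of planar surfaces and annuli, and is therefore a handlebody (maximally compress $\partial^+Y$ to obtain a product over the planar surface $\partial^-Y$, which is already a handlebody, and note that the compressions only reattach $1$-handles). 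Consequently $\mathcal{W}$ is a connected union of handlebodies $X_1,\dots,X_m$ glued to one another along the planar surfaces $\Sigma\cap\mathcal{W}$, with $\mathcal{T}=\partial\mathcal{W}$ cut by the slope-$s$ curves $\Sigma\cap\mathcal{T}$ into annuli which serve as the vertical boundaries of the $X_j$. Since $\mathcal{T}$ is incompressible in $\mathcal{W}$, no slope-$s$ curve bounds a disk in any $X_j$.

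Next I would reformulate the conclusion homologically. Let $m$ be the meridional ($\infty$) slope. Because $\mathcal{W}$ is the exterior of a knot in a homology sphere, $H_1(\mathcal{W})\cong\mathbb{Z}$, $H_2(\mathcal{W},\mathcal{T})\cong\mathbb{Z}$ is generated by a Seifert surface class $[S]$ with $\partial[S]=[m]$, and the intersection pairing $H_1(\mathcal{W})\times H_2(\mathcal{W},\mathcal{T})\to\mathbb{Z}$ is perfect. For a slope-$s$ curve $c\subset\mathcal{T}$ one has $[c]\cdot[S]=\widehat{i}(c,m)$, the geometric intersection number on $\mathcal{T}$, and this number is $\pm1$ exactly when $s$ is an integer slope. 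Hence $[c]$ generates $H_1(\mathcal{W})$ if and only if $s$ is an integer slope, and it suffices to prove that a slope-$s$ curve generates $H_1(\mathcal{W})$.

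The engine for this is the dual disks. If an annulus $A$ of $\mathcal{T}$ is a negative vertical boundary annulus of a piece $X$, then by Definition~\ref{Drelative compression body} it has a dual disk $D_A$: a properly embedded disk in $X$ whose boundary meets $A$ in a single essential arc, so $\partial D_A$ crosses each of the two boundary circles of $A$ exactly once. Thus each boundary circle $c$ of $A$ is primitive in $H_1(X)$, with $D_A$ realizing a homomorphism $H_1(X)\to\mathbb{Z}$ sending $[c]$ to $1$; when $A$ is a positive vertical boundary annulus of $X$ the same conclusion holds in the piece on the other side of $A$ (where $A$ is negative), and the $\partial_v^0$ case is handled similarly using a spanning arc. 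I would then compute $H_1(\mathcal{W})$ by Mayer--Vietoris over the handlebody decomposition above: all slope-$s$ curves are already identified in $H_1(\mathcal{T})$, hence in $H_1(\mathcal{W})$, and the Mayer--Vietoris relations coming from the planar gluing surfaces identify the class of each $c$ as computed in either of the two handlebodies it lies between. Since $\mathcal{W}$ is connected and $H_1(\mathcal{W})$ is torsion-free of rank one, these relations, the primitivity statements, and the fact that the core graphs of the relative compression bodies (Remark~\ref{Rproperty}(4)) together with the curves $c$ carry all of $H_1(\mathcal{W})$, should force the common class $[s]$ to be a generator. In particular $[s]\ne0$, so $s$ is not the meridional slope, and the slope is a finite integer, which is the Claim.

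The step I expect to be the main obstacle is the last one: tracking, through all the Mayer--Vietoris relations, how the primitivity of $[c]$ inside a single handlebody piece propagates to primitivity of $[s]$ in $H_1(\mathcal{W})$, rather than merely to $[s]\ne0$ or to $[s]$ generating only the image of one piece. The cleanest route is probably to amalgamate the stack of $\mathcal{W}$ into a relative Heegaard structure of $\mathcal{W}$, so that a single well-chosen dual disk becomes visibly a compressing (or $\partial$-compressing) disk for one side that meets a slope-$s$ curve on $\mathcal{T}$ exactly once; it is precisely here that the hypotheses are used at full strength, since collapsing every piece to a handlebody is what makes such an amalgamated picture available and forces the compressing disk to intersect $\mathcal{T}$ in a single curve of slope $s$.
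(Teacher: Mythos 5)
There is a genuine gap, and it sits exactly where you flag it: the passage from ``each boundary circle of a $\partial_v^-$ annulus is primitive in $H_1$ of the handlebody piece it meets'' to ``a slope-$s$ curve is primitive in $H_1(\mathcal{W})$.'' Primitivity in $H_1(\mathcal{W})\cong\mathbb{Z}$ is equivalent to pairing $\pm 1$ with the generator of $H_2(\mathcal{W},\mathcal{T})\cong\mathbb{Z}$, i.e.\ to the existence of a relative $2$-cycle in all of $\mathcal{W}$ meeting the curve once — and since that generator is the Seifert class, this is literally a restatement of the integrality of the slope. A dual disk lives inside a single piece (or, in your amalgamated picture, inside one compression body of a Heegaard splitting of $\mathcal{W}$), so it only certifies primitivity in $H_1$ of that piece; neither Mayer--Vietoris bookkeeping nor the amalgamation trick produces the global relative cycle you need, and your plan offers no mechanism for it. The clearest symptom that the plan cannot close is that it never uses the standing hypothesis of this section that every component of $\Sigma\cap\mathcal{W}$ is \emph{separating}. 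That hypothesis is indispensable: when non-separating (still planar) components are allowed, the boundary curves have the meridional $\infty$-slope (see section~\ref{Snonsep}) while all the local dual-disk structure you invoke is unchanged, so the conclusion ``$[c]$ generates $H_1(\mathcal{W})$'' is false in that setting. Any correct argument must locate where separation is used, and yours does not.

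For comparison, the paper argues globally rather than piecewise: using Remark~\ref{Rproperty}(5),(6) it caps each relative compression body $Y$ in the stack with $2$-handles along all its vertical annuli; planarity of $\Sigma\cap\mathcal{W}$ and of $\partial^+Y$ makes each capped piece a punctured $3$-ball, so the Dehn filling $\mathcal{W}'$ of $\mathcal{W}$ along the slope of $\Sigma\cap\mathcal{T}$ is $S^3$ or a connected sum of copies of $S^2\times S^1$. Then $H_1(\mathcal{W}')$ being free abelian rules out every slope except an integer or $\infty$, and the $\infty$-slope is excluded precisely by the separating hypothesis: in that case $\mathcal{W}'\cong S^2\times S^1$, the capped-off components of $\Sigma\cap\mathcal{W}$ become spheres cutting $\mathcal{W}'$ into punctured balls, hence some sphere is non-separating, forcing a non-separating component of $\Sigma\cap\mathcal{W}$ — a contradiction. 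Your first structural step (each piece is a handlebody, $\mathcal{W}$ is handlebodies glued along planar surfaces) is correct and is morally the same observation as the paper's punctured-ball step, but the homological endgame needs to be run on the filled manifold $\mathcal{W}'$, together with the non-separating-sphere argument, not on primitivity within pieces of $\mathcal{W}$.
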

\begin{proof}[Proof of the claim]
$\Sigma\cap \mathcal{W}$ divides $\mathcal{W}$ into a stack of relative compression bodies.  Let $Y$ be any relative compression body in this stack.  
By Remark~\ref{Rproperty}(5), the manifold $\widehat{Y}$ obtained by adding a 2-handle along each annulus in $\partial_v^-Y$ is also a relative compression body.  Next, add  2-handles to $\widehat{Y}$ along every annulus in $\partial_v^0Y$. By Remark~\ref{Rproperty}(6), the resulting manifold, denoted by $\widehat{Y}'$, is also a relative compression body.  Since each component of $\Sigma\cap \mathcal{W}$ is a planar surface, $\partial^-\widehat{Y}'$ is a collection of 2-spheres.  
By the hypothesis of the claim, $\partial^+Y$ is a planar surface, which implies that $\partial^+\widehat{Y}'$ is a 2-sphere.  Hence 
$\widehat{Y}'$ is a punctured 3-ball (i.e. a 3-ball with a collection of 3-balls removed from its interior).  Let $\widehat{Y}''$ be the manifold obtained from $\widehat{Y}'$ by adding a 2-handle along each annulus in $\partial_v^+Y$.  Since $\widehat{Y}'$ is a punctured 3-ball, so is $\widehat{Y}''$.

Let $\mathcal{W}'=\mathcal{W}\cup (D^2\times S^1)$ be the manifold obtained by a Dehn filling on $\mathcal{W}$ with each curve of $\Sigma\cap \mathcal{T}$ bounding a disk in the solid torus $D^2\times S^1$.  The argument above implies that each relative compression body in the stack $\mathcal{W}$ extends to a punctured 3-ball in $\mathcal{W}'$.  Hence $\mathcal{W}'$ is either $S^3$ or a connected sum of $S^2\times S^1$.

Now consider the slope of the curves in $\Sigma\cap \mathcal{T}$. If the slope is neither an integer nor the $\infty$-slope, then $H_1(\mathcal{W}')$ is a nontrivial finite cyclic group, contradicting our conclusion that $\mathcal{W}'$ is either $S^3$ or a connected sum of $S^2\times S^1$. If the slope is the $\infty$-slope, then $H_1(\mathcal{W}')=\mathbb{Z}$ and $\mathcal{W}'$ must be $S^2\times S^1$.  
Since  each relative compression body in the stack $\mathcal{W}$ extends to a punctured 3-ball in $\mathcal{W}'$, $\Sigma\cap\mathcal{W}$ extends into a collection of 2-spheres that divide $\mathcal{W}'$ into a collection of punctured 3-balls. 
This implies that there must be a component of $\Sigma\cap\mathcal{W}$ that extends to a non-separating 2-sphere in $\mathcal{W}'=S^2\times S^1$.  This means that a component of $\Sigma\cap\mathcal{W}$ has an odd number of boundary curves and is non-separating in $\mathcal{W}$, a contradiction to the hypothesis of this section. 
\end{proof}

Theorem~\ref{Ttorus} in the case that every surface in $\Sigma\cap\mathcal{W}$ is separating now follows from the claim above and the discussions in the 3 cases:  By Cases (a), we may assume every component of $\Sigma\cap\mathcal{W}$ is a planar surface.  By Case (c), we may assume that for each relative compression body $Y$ in the stack $\mathcal{W}$, $\partial^+Y$ is a planar surface.  So it follows from the Claim that the slope of the curves in $\Sigma\cap\mathcal{T}$ is an integer.  Now Theorem~\ref{Ttorus} follows from Case (b).

Next, we consider the case that a component of $\Sigma\cap\mathcal{W}$ is separating.

\section{Type II blocks}\label{StypeII}

In this section, we describe the second type of building block in the construction.  

We call a genus-$g$ handlebody $X$ an \textbf{extended marked handlebody} if $\partial X$ contains $m+1$ ($m\le g$) disjoint annuli $A_0, A_1,\dots, A_m$, such that 
\begin{enumerate}
\item $A_0$ is trivial  (i.e.~$\partial A_0$ is trivial in $\partial X$) and each $A_i$ ($i\ge 1$) is an essential annulus in $\partial X$
\item $X$ contains $m$ disjoint meridional disks $\tau_1,\dots, \tau_m$ such that $\tau_i\cap A_j=\emptyset$ if $i\ne j$, and $\tau_i\cap A_i$ is a single essential arc of $A_i$ for each $i\ge 1$.
\end{enumerate}
The only difference between a marked handlebody and an extended marked handlebody is that one marked annulus is trivial in an extended marked handlebody.

Similar to the description of a marked handlebody in section~\ref{StypeI}, we can describe $X$ as follows: 
Start with a marked handlebody $X_1$ constructed  by first connecting a central 3-ball $B$ to a collection of solid tori $T_1,\dots, T_m$ using 1-handles, and then adding $g-m$ 1-handles to $B$ (here we use the same notation as in section~\ref{StypeI}).  
Let $A_1, \dots, A_m$ be the annuli in $\partial T_1,\dots, \partial T_m$ with $\partial_vX_1=\bigcup_{i=1}^m A_i$. 
Next, connect a 3-ball $B'=D^2\times I$ to $B$ using a 1-handle $H_0$ and require $H_0$ is attached to $D^2\times\{1\}\subset\partial B'$.  Denote the resulting manifold by $X$ and denote $(\partial D^2)\times I$ by $A_0$.  
The set of annuli $A_0, A_1,\dots A_m$ are our marked annuli on $\partial X$.   

The boundary of $X$ has two parts: (1) the vertical boundary $\partial_vX=\bigcup_{i=0}^m A_i$ and (2) the horizontal boundary $\partial_hX=\overline{\partial X\setminus\partial_vX}$.   So $\partial_hX$ has two components, one of which is the disk $D^2\times\{0\}\subset\partial B'$.

Next, we describe the cross-section disks and suspension surfaces for $X$.  
We view $X$ as the manifold obtained by connecting a marked handlebody $X_1$ to a 3-ball $B'=D^2\times I$ via a 1-handle $H_0$ as above. 
A cross-section disk for $X$ is simply an extension of a cross-section disk for $X_1$ into the 1-handle $H_0$ and the 3-ball $B'$.  
More precisely, start with a cross-section disk $E'$ for the marked handlebody $X_1$.  
Let $H_0=\Delta_0\times I$ be the 1-handle connecting $X_1$ to $B'=D^2\times I$ with $\Delta_0\times\{0\}\subset D^2\times\{1\}\subset\partial B'$.  
We may suppose that there is a disk $E_H$ properly embedded in the 1-handle $H_0=\Delta_0\times I$ in the form of $\delta_0\times I$ such that $E_H\cap E'$ is the arc $\delta_0\times \{1\}$ in $\partial H_0$.  Moreover, we may suppose that there is a disk $E_D$  properly embedded in $B'=D^2\times I$ in the form of $\delta\times I$, such that $E_D\cap E_H$ is the arc $\delta_0\times \{0\}$ in $\partial H_0$.  
Let $E=E'\cup E_H\cup E_D$.  We call $E$ a {\bf cross-section disk} for $X$.

A {\bf suspension surface} in $X$ is constructed in the same way as in section~\ref{StypeI}:  
Start with a collection of non-nested $\partial$-parallel annuli, each of which is parallel to a subannulus of some $A_i$ ($i=0, 1,\dots, m$).  
Then connect these $\partial$-parallel annuli to a central 2-sphere along arcs in a cross-section disk.

Denote the two components of $\partial_hX$ by $F_0$ and $F_1$, where $F_0=D^2\times\{0\}\subset\partial B'$ is the disk component.  
Let $S$ be a suspension surface in $X$ over a set of $\partial$-parallel annuli.  
Let $\hat{S}$ be the surface obtained by connecting $S$ and the disk $F_0$ by a tube along an arc in the cross-section disk, see Figure~\ref{Ftype2}(a) for a 1-dimensional schematic picture. 
We call $\hat{S}$ an {\bf extended suspension surface}.   
Let $\hat{S}'$ be the surface obtained by adding some (possibly none) trivial tubes to $\hat{S}$, and 
let $X_0$ be the submanifold of $X$ bounded by $F_1$, $\hat{S}'$, and the subannuli of $\partial_vX$ between them. 
We call $X_0$ a {\bf type II block}.  Similarly, define the horizontal and vertical boundaries of $X_0$ as $\partial_hX_0=F_1\cup \hat{S}'$ and $\partial_vX_0=\overline{\partial X_0\setminus\partial_hX_0}$ respectively.

\vspace{10pt}

\noindent
{\bf  Conversion between type I and type II blocks:}

\vspace{10pt}

Let $T_0, T_1,\dots, T_m$ be a collection of solid tori and 
let $\mathfrak{X}$ be a marked handlebody constructed by connecting these $T_i$'s to a central 3-ball $B$ using 1-handles, as in section~\ref{StypeI}.   
Suppose $\partial_v\mathfrak{X}$ consists of annuli $A_0,\dots, A_m$, where each $A_i$ is an annulus in $\partial T_i$.  Let $\Gamma_0,\dots,\Gamma_n$ be a collection of non-nested $\partial$-parallel annuli in $\mathfrak{X}$ with $\partial\Gamma_i\subset\partial_v\mathfrak{X}$, and let $S$ be a suspension surface over these $\Gamma_i$'s.  
Let $\mathfrak{X}_0$ be the submanifold of $\mathfrak{X}$ bounded by $S$, $\partial_h\mathfrak{X}$, and the subannuli of $\partial_v\mathfrak{X}$ between them.  So $\mathfrak{X}_0$ is a type I block by definition. 

Suppose $\partial\Gamma_0\subset A_0$, and suppose a component of $\partial A_0$ and a component of $\partial\Gamma_0$ bounds a subannulus $A_0'\subset A_0$ such that $\Int(A_0')$ does not contain boundary curves of any $\Gamma_i$.  
In particular, $A_0'$ is a component of $\partial_v \mathfrak{X}_0$. 
Now add a 2-handle to $\mathfrak{X}_0$ along the annulus $A_0'$ and denote the resulting manifold by $X_0$.  We claim that $X_0$ is a type II block.  To see this, start with the solid torus $T_0$ in the construction of $\mathfrak{X}$ and view $\Gamma_0$ as a $\partial$-parallel annulus in $T_0$.  
Let $\widehat{\Gamma}_0$ be the solid torus bounded by $\Gamma_0$ and the subannulus of $A_0$ between $\partial\Gamma_0$.
We may view the closure of $T_0\setminus\widehat{\Gamma}_0$ as a product $A\times I$, where $A$ is an annulus, $\Gamma_0=A\times\{0\}$, and the annulus $A_0'$ described above is a component of $(\partial A)\times I$.  So adding a 2-handle along $A_0'$ changes $A\times I$ to a 3-ball $B'=D^2\times I$, and $\Gamma_0$ extends to the disk $D^2\times\{0\}$.  Moreover, the set of 1-handles connect $B'$, $T_1,\dots, T_m$ and $B$ into an extended marked handlebody.  Further, the suspension surface $S$ in $\mathfrak{X}$ becomes an extended suspension surface in this extended marked handlebody.  Thus $X_0$ is a type II block.

Conversely, given a type II block $X_0$ as above, let $\alpha$ be an arc of the form $\{x\}\times I\subset D^2\times I=B'$ and suppose $\alpha$ is disjoint from the cross-section disk of $\hat{S}$.  Then $\mathfrak{X}_0=X_0\setminus N(\alpha)$ is a type I block (one can view  $N(\alpha)$ as the 2-handle above and view $\alpha$ to be a co-core of the 2-handle).

\begin{remark}\label{Rconversion}
Let $X_0$ be a type II block and let $\alpha$ be the arc of the form $\{x\}\times I\subset D^2\times I=B'$ as above.  
Since $\alpha$ is unknotted, $X_0\setminus N(\alpha)$ is a topological handlebody.  We may view the closure of $N(\alpha)$ as a cylinder $\Delta_\alpha\times I$, where $\Delta_\alpha$ is a disk and $\Delta_\alpha\times \partial I\subset\partial_hX_0$. 
In the description above, we view $\mathfrak{X}_0=X_0\setminus N(\alpha)$ as a type I block and view $A_0'=\partial\Delta_\alpha\times I$ as a component of $\partial_v\mathfrak{X}_0$. 
Moreover, $\partial_h \mathfrak{X}_0=\partial_h X_0\setminus (\Delta_\alpha\times \partial I)$, and $\partial_h \mathfrak{X}_0$ has two components which we denote by $S_1$ and $S_2$.
Note that we may view $\mathfrak{X}_0$ and its boundary surface in a slightly different way:  
If we include the annulus $A_0'$ as part of $\partial_h \mathfrak{X}_0$ instead, in other words, if we define  $\partial_h\mathfrak{X}_0=S_1\cup A_0'\cup S_2$, then it follows from the proof of Lemma~\ref{LW1} that $\mathfrak{X}_0$ is a marked handlebody with $\partial_h\mathfrak{X}_0=S_1\cup A_0'\cup S_2$.
\end{remark}

\begin{figure}[h]
\begin{center}
\begin{overpic}[width=4in]{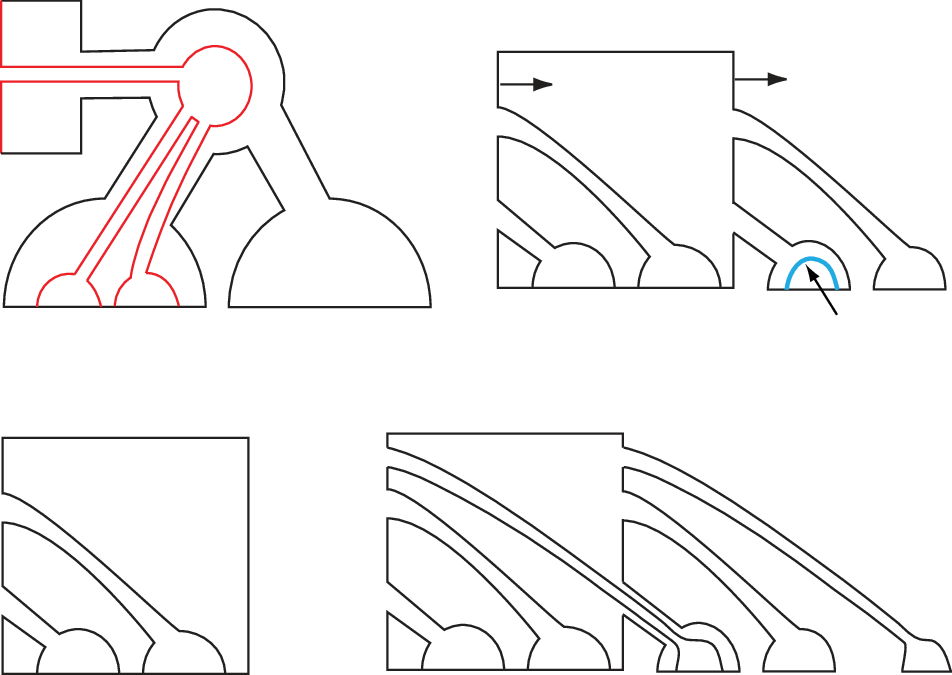}
\put(20,-5){(c)}
\put(67,-5){(d)}
\put(20,34){(a)}
\put(67,34){(b)}
\put(23,60){$\hat{S}$}
\put(34,51){$F_1$}
\put(0,21){$D^-$}
\put(26,16){$D^+$}
\put(12,11){$P_{k-1}$}
\put(60,55){$P_{k-1}^-$}
\put(84,55){$P_{k-1}^+$}
\put(86.5,35){$\Gamma_{k-2}$}
\put(48,21){$P_{k-2}^-$}
\put(74,20){$P_{k-2}^+$}
\put(58,17.5){$\mathfrak{X}_{k-2}$}
\end{overpic}
\vspace{6pt}
\caption{Extended suspension surfaces and type II blocks}\label{Ftype2}
\end{center}
\end{figure}

\section{Non-separating surfaces}\label{Snonsep}

Let $M=\mathcal{W}\cup_\mathcal{T} \mathcal{V}$ and $N=\widehat{\mathcal{T}}\cup_\mathcal{T} \mathcal{V}$ be as in Theorem~\ref{Ttorus}.  Let $\Sigma$ be the collection of surfaces in an untelescoping of a minimal genus Heegaard splitting of $M$.  In this section, we prove Theorem~\ref{Ttorus} in the case that $\Sigma\cap\mathcal{W}$ contains a component that is non-separating in $\mathcal{W}$.  This, combined with section~\ref{Ssep}, gives a full proof of Theorem~\ref{Ttorus}.

As in section~\ref{Ssep}, we view $\mathcal{T}$ as a torus in both $M$ and $N$, and we may assume that $\Sigma$ has no component that is entirely in $\mathcal{W}$.  Suppose at least one component of $\Sigma\cap \mathcal{W}$ is non-separating in $\mathcal{W}$.  By our choice of framing, the curves $\Sigma\cap \mathcal{T}$ in $\mathcal{T}$ must have the $\infty$-slope.

Let $S$ be a non-separating component of $\Sigma\cap \mathcal{W}$.     Denote the set of boundary curves of $S$ by $C_0$.  We fix a normal direction for $S$ which induces a normal direction for each circle of $C_0$ in $\mathcal{T}$.  
By the homology assumption on $\mathcal{W}$, $C_0$ (with the induced orientation) represents a primitive element in $H_1(\mathcal{T})$. In particular, $C_0$ contains an odd number of curves. 
Curves in $C_0$ divide the torus $\mathcal{T}$ into an odd number of annuli and we denote this set of annuli by $\mathcal{A}_0$.  We call an annulus in $\mathcal{A}_0$ an inner annulus if the normal directions of both of its boundary curves point out of this annulus.  The homology conclusion on $C_0$ implies that, if $|C_0|>1$, then there must be at least one inner annulus in $\mathcal{A}_0$.  Let $\mathcal{A}_0'$ be the set of all inner annuli in $\mathcal{A}_0$.

Next, we remove the boundary curves of the annuli in $\mathcal{A}_0'$ from $C_0$ and let $C_1$ denote the remaining set of curves. 
Clearly $C_0$ and $C_1$ represent the same element in $H_1(\mathcal{T})$.  Now we apply the same procedure on $C_1$: $C_1$ divides $\mathcal{T}$ into a collection of annuli, which we denote by $\mathcal{A}_1$, and we define inner annuli for $\mathcal{A}_1$ similarly.  If  $|C_1|>1$, there must be at least one inner annulus in $\mathcal{A}_1$. 
Let $\mathcal{A}_1'$ be the set of inner annuli in $\mathcal{A}_1$. 
Note that, by our construction, each inner annulus in $\mathcal{A}_1'$ must contain at least one inner annulus of $\mathcal{A}_0'$ as a subannulus.  
Similarly, let $C_2$ be the subset of $C_1$ obtained by deleting all the boundary curves of the annuli in $\mathcal{A}_1'$.  This procedure produces a sequence of sets of curves $C_0\supset C_1\supset\cdots\supset C_k$ and eventually $C_k$ is a single curve.  Moreover, each annulus in $\mathcal{A}_i'$ must contain at least one annulus of $\mathcal{A}_{i-1}'$ as a subannulus.

Next, we build a planar surface $P$ in $\widehat{\mathcal{T}}$, such that $\partial P$ is the same set of curves as $\partial S$ in $\mathcal{T}$. 
Let $D$ be a meridional disk of $\widehat{\mathcal{T}}$ bounded by $C_k$ (recall that $C_k$ is a single curve).  
For each inner annulus $A$ in $\mathcal{A}_i'$, build a $\partial$-parallel annulus $\Gamma$ in $\widehat{\mathcal{T}}$ parallel to $A$ and with $\partial\Gamma=\partial A$. 
Let $\Gamma_i$ be the set of such $\partial$-parallel annuli parallel to the set of inner annuli in $\mathcal{A}_i'$.  
We may suppose $D$ and all the annuli in the $\Gamma_i$'s are pairwise disjoint. 
Assign normal directions to $D$ and these $\partial$-parallel annuli according to the normal directions of their boundary curves.  So the union of $D$ and these $\partial$-parallel annuli is a disconnected surface whose boundary is the same as $\partial S$.

Now we use tubes to connect $D$ and the annuli in the $\Gamma_i$'s together in a way that preserves the induced normal directions.  The meridional disk $D$ cuts the solid torus $\widehat{\mathcal{T}}$ into a product $D\times I$.  
Denote $D\times\{0\}$ by $D^-$ and $D\times\{1\}$ by $D^+$, and view $D^\pm$ as the $\pm$-side of $D$.  Suppose the induced normal direction of $D^+$ points out of $D\times I$ and the direction of $D^-$ points into $D\times I$. 

As illustrated in the 1-dimensional schematic picture in Figure~\ref{Ftype2}(c), we first connect each annulus in $\Gamma_{k-1}$ to $D^-$ using a trivial arc.   Then we add tubes along these arcs to connect $D^-$ and the annuli in $\Gamma_{k-1}$. Denote the resulting planar surface by $P_{k-1}$.  By the definition of inner annulus, the induced normal directions for $\Gamma_{k-1}$ and $D^-$ are compatible in $P_{k-1}$.
We may view $P_{k-1}$ as a non-separating surface in $\widehat{\mathcal{T}}$. 

As illustrated in Figure~\ref{Ftype2}(b), if we cut $\widehat{\mathcal{T}}$ open along $P_{k-1}$, the resulting manifold, denoted by $\mathfrak{X}_{k-1}$, is a type II block.  Denote the two components of $\partial_h\mathfrak{X}_{k-1}$ by $P_{k-1}^-$ and $P_{k-1}^+$, and suppose the induced normal direction at $P_{k-1}^-$ points into $\mathfrak{X}_{k-1}$, see  Figure~\ref{Ftype2}(b).  
Moreover,  $\Gamma_{k-2}$ can be viewed as a collection of $\partial$-parallel annuli in $\mathfrak{X}_{k-1}$.

Similarly, we connect $P_{k-1}^-$ to the annuli in $\Gamma_{k-2}$ using tubes along unknotted arcs, and denote the resulting planar surface by $P_{k-2}$.    We may view $P_{k-2}$ as a non-separating surface in $\widehat{\mathcal{T}}$ and let $\mathfrak{X}_{k-2}$ be the manifold obtained by  cutting $\widehat{\mathcal{T}}$ open along $P_{k-2}$.  
As illustrated in the schematic picture in Figure~\ref{Ftype2}(d), $\mathfrak{X}_{k-2}$ is also a type II block with the two components of $\partial_h\mathfrak{X}_{k-2}$ being the two sides of $P_{k-2}$. Denote the two sides of $P_{k-2}$ by $P_{k-2}^\pm$.

We continue this procedure:  Let $P_{j-1}$ be the planar surface obtained by connecting the annuli in $\Gamma_{j-1}$ to $P_j^-$ using tubes along unknotted arcs for each $j$.  Denote the final planar surface $P_0$ by $P$.  So $P$ is a non-separating planar surface with $\partial P=C_0=\partial S$ and with compatible normal direction.  Moreover, the manifold obtained by cutting $\widehat{\mathcal{T}}$ open along $P$ is a type II block.

Suppose $\Sigma\cap \mathcal{W}$ contains another non-separating surface $S'$.  
Next we build a non-separating planar surface $P'$ in the solid torus $\widehat{\mathcal{T}}$ with $\partial P'=\partial S'$ in $\mathcal{T}$ and such that $P'$ and $P$ divide $\widehat{\mathcal{T}}$ into a pair of type II blocks.

Assign a normal direction to $S'$ compatible with the normal direction of $S$, i.e.~$S$ and $S'$ (with this orientation) represent the same element in $H_2(\mathcal{W},\mathcal{T})$.  The normal direction of $S'$ induces an orientation for each boundary curve.  We say two curves in $\partial S$ and $\partial S'$ have the same orientation if they represent the same element in $H_1(\mathcal{T})$, otherwise, they have opposite orientations.

Let $Y_P$ be the type II block obtained by cutting the solid torus $\widehat{\mathcal{T}}$ open along $P$. 
We may view $\partial S'$ as a set of curves in $\partial_vY_P$. 

\begin{claims}\label{Claimcurves}
Let $A$ be a component of $\partial_vY_P$, and let $\gamma'$ and $\gamma''$ be the two boundary curves of $A$. 
Let $\gamma_1,\dots,\gamma_n$ be the components of $\partial S'$ that lie in $A$.  
Suppose $\gamma_i$ is adjacent to $\gamma_{i+1}$ for any $i=1,\dots, n-1$, and suppose $\gamma'$ (resp.~$\gamma''$) is adjacent to $\gamma_1$ (resp.~$\gamma_n$) in $A$. Then
\begin{enumerate}
  \item if $\gamma'$ and $\gamma''$ have the same orientation, then $A$ contains at least one curve of $\partial S'$, 
  \item any two adjacent curves  $\gamma_i$ and $\gamma_{i+1}$ have opposite orientations,
  \item $\gamma'$ and $\gamma_1$ have the same orientation, and
  \item $\gamma''$ and $\gamma_n$ have the same orientation
\end{enumerate}
\end{claims}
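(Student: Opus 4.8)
The plan is to argue entirely inside $\mathcal{W}$: although $Y_P$ lives in $\widehat{\mathcal{T}}$, the four assertions of Claim~\ref{Claimcurves} only involve the curves $\partial S'$, the two curves of $\partial S$ that form $\partial A$, and the orientations they inherit from the normal directions of $S$ and $S'$ in $\mathcal{T}$, all of which are intrinsic to $\mathcal{W}$. Fix the product structure $A\cong[0,1]\times S^1$ with $\gamma'=\{0\}\times S^1$, $\gamma''=\{1\}\times S^1$ and $\gamma_i=\{t_i\}\times S^1$, $0<t_1<\dots<t_n<1$ (this is what the adjacency hypotheses give). Since the curves of $\partial S\cup\partial S'$ all have the meridional slope and are pairwise disjoint, each such curve $c$ lying in $A$ carries a sign $\varepsilon(c)\in\{+,-\}$ according as its normal direction (induced from $S$ or from $S'$) points toward $\{1\}\times S^1$ or toward $\{0\}\times S^1$; since the induced orientation of a curve is the one pairing with the normal direction to the orientation of $\mathcal{T}$, two of these curves have the same orientation in $H_1(\mathcal{T})$ exactly when they have the same sign. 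Under this translation, part (2) reads $\varepsilon(\gamma_i)=-\varepsilon(\gamma_{i+1})$, parts (3) and (4) read $\varepsilon(\gamma')=\varepsilon(\gamma_1)$ and $\varepsilon(\gamma'')=\varepsilon(\gamma_n)$, and part (1) says $\varepsilon(\gamma')=\varepsilon(\gamma'')$ is impossible when $n=0$ (for $n\ge1$ it already follows from the other three). I would decide, for each sub-annulus into which $\gamma',\gamma_1,\dots,\gamma_n,\gamma''$ cut $A$, which complementary region of $S\cup S'$ in $\mathcal{W}$ its collar lies in, and then read off the signs.

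The structural ingredient, which I expect to be the main obstacle, is a description of $\mathcal{W}\setminus(S\cup S')$. By Poincar\'e--Lefschetz duality $H_2(\mathcal{W},\partial\mathcal{W})\cong H^1(\mathcal{W})\cong\mathbb{Z}$; let $\alpha$ be the class dual to $[S]$ with its chosen co-orientation, so that $[S']=[S]$ is the same primitive class and the co-orientations of $S$ and $S'$ both point in the direction of increasing $\alpha$ (automatic on a connected surface once $\alpha$ is fixed). Each of $S,S'$ lifts to the infinite cyclic cover $\widetilde{\mathcal{W}}\to\mathcal{W}$ associated to $\alpha$, since $\alpha$ vanishes on loops contained in $S$ or in $S'$ (a two-sided surface can be pushed off itself). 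Cutting $\mathcal{W}$ along $S$ and stacking copies exhibits $\widetilde{\mathcal{W}}=\bigcup_{j\in\mathbb{Z}}\mathcal{W}_S^{(j)}$, glued in a bi-infinite line along lifts $\dots,S^{(-1)},S^{(0)},S^{(1)},\dots$ of $S$, with the co-orientation of $S$ running from $\mathcal{W}_S^{(j)}$ to $\mathcal{W}_S^{(j+1)}$. One lift $\widehat{S'}$ of $S'$ is disjoint from every $S^{(j)}$, hence lies in a single $\mathcal{W}_S^{(j)}$, say $\mathcal{W}_S^{(0)}$; being dual to a primitive class it separates $\widetilde{\mathcal{W}}$, hence separates $\mathcal{W}_S^{(0)}$ into a piece $F$ adjacent to $S^{(0)}$ and a piece $B$ adjacent to $S^{(1)}$. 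Projecting down, $\mathcal{W}\setminus(S\cup S')$ has exactly two components $R_1$ (the image of $F$) and $R_2$ (the image of $B$), and, tracking sides, $R_1$ lies on the positive side of $S$ and the negative side of $S'$, while $R_2$ lies on the negative side of $S$ and the positive side of $S'$. Making this precise — especially that the complementary regions of $\widetilde S\cup\widetilde{S'}$ form a bi-infinite line on which $\mathbb{Z}$ acts by a shift of two, so there are exactly two orbits — is the technical heart.

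Granting this, the four assertions are routine. Each sub-annulus $A^\ast$ of $A$ bounded by two consecutive curves of $\gamma',\gamma_1,\dots,\gamma_n,\gamma''$ has interior disjoint from $\partial S\cup\partial S'$ (the interior of $A$ misses $\partial S$, and $A^\ast$ misses $\partial S'$ by construction), so a thin collar $A^\ast\times[0,\epsilon)$ pushed into $\mathcal{W}$ is connected and disjoint from $S\cup S'$, hence lies entirely in $R_1$ or entirely in $R_2$. Near a curve of $\partial S$ the side toward which $S$'s normal points is the positive side of $S$, so it lies in $R_1$, and the other side lies in $R_2$; near a curve of $\partial S'$ this is reversed, because $R_1$ lies on the negative side of $S'$. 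Reading this off at the two ends of $A^\ast$: if $A^\ast$ joins $\gamma_i$ to $\gamma_{i+1}$ (both in $\partial S'$), then $A^\ast\subset R_1$ forces $\varepsilon(\gamma_i)=-$ and $\varepsilon(\gamma_{i+1})=+$ while $A^\ast\subset R_2$ forces the reverse, so the signs are opposite in either case, proving part (2); if $A^\ast$ joins $\gamma'\in\partial S$ to $\gamma_1\in\partial S'$ then $A^\ast\subset R_1\iff\varepsilon(\gamma')=+\iff\varepsilon(\gamma_1)=+$, proving part (3), and symmetrically for $\gamma_n,\gamma''$, proving part (4); finally if $n=0$, so $A=A^\ast$ joins $\gamma'$ to $\gamma''$ (both in $\partial S$), the same analysis forces $\varepsilon(\gamma')=-\varepsilon(\gamma'')$, which rules out $\varepsilon(\gamma')=\varepsilon(\gamma'')$ and gives part (1).
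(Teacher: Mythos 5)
Your proposal is correct in substance, but it takes a more structural route than the paper. You first establish a global decomposition lemma: since $S$ and $S'$ are disjoint, connected, and represent the same class in $H_2(\mathcal{W},\mathcal{T})\cong\mathbb{Z}$ with compatible co-orientations, $\mathcal{W}\setminus(S\cup S')$ has exactly two components $R_1,R_2$, with $R_1$ on the plus side of $S$ and the minus side of $S'$, and $R_2$ the other way around; you then read off all four parts by deciding which region a collar of each subannulus of $A$ lies in. The paper never isolates this lemma: it argues arc-by-arc inside $A$, pushing an essential arc between two adjacent curves slightly into $\mathcal{W}$ and deriving a contradiction from the facts that (because $[S]=[S']$ in $H_2(\mathcal{W},\mathcal{T})\cong\mathbb{Z}$) any path from the plus to the minus side of $S$ must meet $S'$, and vice versa, together with the assertion that ``the plus side of $S$ faces the minus side of $S'$.'' The homological content is identical; your version promotes the ``plus side faces minus side'' assertion to an explicit statement about $\mathcal{W}\setminus(S\cup S')$, which is cleaner and reusable, at the cost of a covering-space detour that the paper avoids.

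One caveat on the step you yourself flag as the technical heart: the parenthetical reason that $\widehat{S'}$ separates $\widetilde{\mathcal{W}}$ ``being dual to a primitive class'' is not right as stated --- duality to a primitive class is precisely why $S'$ is \emph{non}-separating in $\mathcal{W}$, and in the cover a loop is only guaranteed to have zero total intersection with the full preimage of $S'$, not with the single lift $\widehat{S'}$. The quick correct argument, which also bypasses the cover entirely, is: every loop in $\mathcal{W}\setminus S$ has zero algebraic intersection with $S$, hence with $S'$ (same class), so $S'$ separates $\mathcal{W}$ cut along $S$; and the two copies of $S$ lie in different pieces, since otherwise one produces a loop meeting $S$ once and $S'$ not at all. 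With that substitution your outline closes up, and the side-matching bookkeeping in your last paragraph is accurate.
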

\begin{proof}[Proof of Claim~\ref{Claimcurves}]
Suppose part (1) is false, i.e.~$\gamma'$ and $\gamma''$ have the same orientation and $A\cap\partial S'=\emptyset$. 
Let $\tau$ be an essential arc in $A$ connecting $\gamma'$ to $\gamma''$.
Since $\gamma'$ and $\gamma''$ have the same orientation, $\tau$ is an arc connecting the plus side of $S$ to its minus side and $\tau\cap S'=\emptyset$. 
However, since $S$ and $S'$ represent the same element in $H_2(\mathcal{W}, \mathcal{T})\cong\mathbb{Z}$, any arc connecting the plus side to the minus side of $S$ must intersect $S'$. We have a contradiction.

Similarly, if two adjacent curves  $\gamma_i$ and $\gamma_{i+1}$ have the same orientation, then an arc in $A$ connecting  $\gamma_i$ to $\gamma_{i+1}$ is an arc connecting the plus side of $S'$ to its negative side without intersecting $S$.  Again, this is impossible since $H_2(\mathcal{W}, \mathcal{T})\cong\mathbb{Z}$.  Hence part (2) of the claim holds.

Now consider $\gamma'$ and $\gamma_1$. Since $\gamma'$ and $\gamma_1$ are adjacent in $A$, there is an arc $\tau$ in $A$ connecting $\gamma'$ to $\gamma_1$ and disjoint from all other curves. Without loss of generality, suppose $\tau$ is on the plus side of $S$. 
If $\gamma'$ and $\gamma_1$ have opposite orientations, then $\tau$ connects the plus side of $S$ to the plus side of $S'$. This is also impossible since $S$ and $S'$ have compatible orientations and represent the same element in $H_2(\mathcal{W},\mathcal{T})$, which means that the plus side of $S$ faces the minus side of $S'$.  Thus part (3) of the claim holds. Part (4) is similar to part (3).
\end{proof}

Next, we construct a non-separating planar surface $P'$ in the solid torus $\widehat{\mathcal{T}}$, such that $\partial P'=\partial S'$, and $P$ and $P'$ divide $\widehat{\mathcal{T}}$ into two type II blocks.

Before we proceed, we describe the type II block $Y_P$ (obtained by cutting the solid torus $\widehat{\mathcal{T}}$ open along $P$) as in the definition of type II block in section~\ref{StypeII}:  
First let $X_P$ be an extended marked handlebody obtained by connecting $D^2\times I$ and a collection of solid tori $T_1,\dots, T_m$ to a 3-ball $B$ using 1-handles.  
Set $A_0=(\partial D^2)\times I$, $A_i\subset\partial T_i$ and $\partial_vX_P=\bigcup_{i=0}^mA_i$ as in section~\ref{StypeII}.  
The two components of $\partial_hX_P$ consist of the disk $D^2\times\{0\}$ and a surface that we denote by $P^+$. Suppose $P^+\cong P$ ($P^+$ denotes the plus side of $P$).
Let $P^-$ be an extended suspension surface obtained by tubing together $D^2\times\{0\}$ and a collection of non-nested $\partial$-parallel annuli $\Gamma_1,\dots,\Gamma_s$. 
 $Y_P$ can be defined to be the submanifold of $X_P$ bounded by $P^-$, $P^+$, and a collection of subannuli in $\partial_vX_P$.  We view $P^\pm$ as the two sides of $P$.  
Suppose the induced normal direction for $P^+$ points out of $Y_P$ and the direction for $P^-$ points into $Y_P$.

Consider the curves in $\partial S'$.  Since $Y_P\subset X_P$, we may view $\partial_vY_P$ as a collection of subannuli of $\partial_vX_P$ and $\partial S'\subset\partial_vY_P\subset \partial_vX_P$.  In particular, we also view $\partial S'$ as curves in $\partial_vX_P=\bigcup_{i=0}^mA_i$.

\begin{claims}\label{ClaimS'}
Let $X_P$ be the extended marked handlebody as above and $\partial S'\subset\partial_vX_P$. Then
\begin{enumerate}
  \item any two adjacent curves of $\partial S'$ in each $A_i$ ($i=0,\dots, m$) have opposite orientations
  \item $A_0\cap \partial S'$ contains an odd number of curves, and
  \item $A_i\cap \partial S'$ contains an even number of curves for each $i\ge 1$.
\end{enumerate}
\end{claims}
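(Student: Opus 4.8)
The plan is to argue homologically, following the pattern of the proof of Claim~\ref{Claimcurves}. The basic input is that $S$ and $S'$ are disjoint properly embedded surfaces in $\mathcal{W}$ representing the same generator of $H_2(\mathcal{W},\mathcal{T})\cong\mathbb{Z}$; dually, there is a primitive class $\phi\in H^1(\mathcal{W};\mathbb{Z})$ with $\phi(\ell)=\ell\cdot S=\ell\cdot S'$ for every $1$--cycle (or properly embedded arc) $\ell$ in $\mathcal{W}$, and its restriction $\phi|_{\mathcal{T}}\in H^1(\mathcal{T})$ is Poincar\'e dual to the meridional class $\mu_0=[\partial S]=[\partial S']$. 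Hence each component of $\partial S'$ carries an orientation sign $\pm1$ according to whether it represents $+\mu_0$ or $-\mu_0$, the signed sum of these signs over all of $\partial S'$ equals $+1$, and in particular $|\partial S'|$ is odd. Before treating the three parts I would record, reading it off from the construction of $P$ and $X_P$ in this section, how $\partial_vX_P=A_0\cup\cdots\cup A_m$ lies over $\mathcal{T}$: the part of $\partial_vX_P$ on $\mathcal{T}$ is $\partial_vY_P$, that is, $\mathcal{T}$ cut along the curves $C_0=\partial S=\partial P$, so $\partial S'$ lies entirely in this $\mathcal{T}$--part; the annuli $A_1,\dots,A_m$ arise from the $\partial$--parallel annuli $\Gamma_j$ (each $\Gamma_j$ parallel to an inner annulus of some $\mathcal{A}_\ell'$, whose two boundary circles represent opposite classes in $H_1(\mathcal{T})$), while $A_0$ is the trivial annulus $\partial D^2\times I$, which over $\mathcal{T}$ carries a single primitive (meridional) curve isotopic to $C_k=\partial D$.

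For part~(1), suppose $\gamma,\gamma'$ are adjacent components of $\partial S'$ in some $A_i$ and let $B\subset A_i$ be the subannulus between them, so $B\cap\partial S'=\emptyset$. Using how $S$ is positioned relative to the $A_i$ in the construction, a spanning arc $\tau$ of $B$ may be taken disjoint from $S$; then $\tau$ pushed slightly into $\mathcal{W}$ is an arc joining the two sides of $S'$ that stays disjoint from $S$. If $\gamma$ and $\gamma'$ had the same orientation, closing this arc up by a path in $S'$ (pushed off $S'$) would yield a loop $\ell$ with $\ell\cdot S'=\pm1$ but $\ell\cdot S=0$, contradicting $\ell\cdot S=\phi(\ell)=\ell\cdot S'$. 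Hence adjacent curves of $\partial S'$ in each $A_i$ have opposite orientation; this is exactly the argument of Claim~\ref{Claimcurves}(2), carried out inside $X_P$.

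For parts~(2) and~(3), I would run the same arc argument across a whole annulus. For $i\ge1$ the $\mathcal{T}$--part of $A_i$ is an inner annulus of the hierarchy, so a spanning arc $\tau$ of it joins the two boundary curves of that inner annulus and, because the annulus is inner, $\tau$ lies on the \emph{same} side of $S$ near both ends; pushing $\tau$ into $\mathcal{W}$ and closing up through $S$ gives a loop $\ell$ with $\ell\cdot S=0$, hence $\ell\cdot S'=\phi(\ell)=0$, i.e.\ the orientation signs of $\partial S'$ sum to $0$ over $A_i$. Together with part~(1) this forces $|A_i\cap\partial S'|$ to be even, which is part~(3). Summing over all $i$ and using that the orientation signs of $\partial S'$ sum to $+1$ over all of $\mathcal{T}$, we get that they sum to $\pm1$ over $A_0$; with part~(1) this makes $|A_0\cap\partial S'|$ odd, which is part~(2) (equivalently, part~(2) is immediate from part~(3) and the oddness of $|\partial S'|=\sum_{i=0}^m|A_i\cap\partial S'|$).

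The point I expect to require the most care is not the homology, which is a routine adaptation of the reasoning in Claim~\ref{Claimcurves}, but the structural bookkeeping: verifying precisely how each $A_i$ sits over $\mathcal{T}$ and over $S$ — in particular that a spanning subannulus of $A_i$ between consecutive components of $\partial S'$ can be isotoped off $S$, and that for $i\ge1$ the two ends of $A_i$ genuinely correspond to an oppositely--oriented pair of curves while $A_0$ corresponds to the single primitive curve $C_k$ — so that all the orientation signs are tracked consistently through the type~II block structure built in this section.
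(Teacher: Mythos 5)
Your overall strategy (intersection--parity arguments based on $[S']=[S]$ in $H_2(\mathcal{W},\mathcal{T})$) is the same one the paper uses, but the two structural assertions on which your argument leans are exactly where the content of this claim lies, and neither is correct as stated. For part (1): you assert that a spanning arc of the subannulus $B\subset A_i$ between two adjacent curves of $\partial S'$ ``may be taken disjoint from $S$.'' Adjacency here is measured in the annulus $A_i$ of $\partial_v X_P$, not in a component of $\partial_v Y_P$; the two curves may lie in \emph{different} components of $\partial_v Y_P$, in which case the interior of $B$ contains curves of $\partial P^-$ (copies of curves of $C_0=\partial S$) and subannuli lying behind $P^-$ (over the solid tori $\widehat{\Gamma}_j$), which are not even part of $\mathcal{T}$. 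Any realization of the spanning arc in $\mathcal{W}$ then necessarily crosses $\partial S$, and in general also crosses further curves of $\partial S'$ sitting in the inner annuli (which belong to other $A_j$'s), so the loop $\ell$ you build does not have $\ell\cdot S=0$ and the contradiction does not materialize. Handling precisely this case --- via Claim~\ref{Claimcurves}(1),(3),(4), the evenness of the number of intervening curves, and the fact that the two boundary curves of each $\Gamma_j$ are oppositely oriented --- is essentially the whole of the paper's proof of part (1); your argument only reproves Claim~\ref{Claimcurves}(2), the case where both curves lie in the same component of $\partial_v Y_P$.

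For parts (2)--(3) the same issue recurs: your claim that ``for $i\ge1$ the $\mathcal{T}$-part of $A_i$ is an inner annulus of the hierarchy'' is only true when the hierarchy has a single level ($k=1$). When $k\ge 2$, a marked annulus $A_i$ corresponding to a higher-level $\Gamma$ meets $\mathcal{T}$ in that inner annulus with the lower-level inner annuli removed (those are replaced by padding behind $P^-$), so a spanning arc of $A_i$, realized in $\mathcal{W}$ through $\mathcal{T}$, crosses curves of $\partial S$ and also curves of $\partial S'$ that lie on \emph{other} marked annuli. Hence ``the signs of $\partial S'$ sum to $0$ over $A_i$'' does not follow directly from $\ell\cdot S'=\ell\cdot S$; one needs either an induction from innermost annuli outward or a global bookkeeping of the extra crossings (the paper's stated argument, via essential arcs of $A_i$ running from $P^+$ to $P^+$ and of $A_0$ from $P^-$ to $P^+$, is the same parity idea, and your deduction of (2) from (3) together with $|\partial S'|$ odd is fine). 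As written, though, both the disjointness claim in (1) and the structural identification in (3) are gaps, and they are exactly the points this claim is designed to address.
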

\begin{proof}[Proof of Claim~\ref{ClaimS'}]
We first prove part (1). 
By Claim~\ref{Claimcurves}(2), any two adjacent curves of  $\partial S'$ in each component of $\partial_vY_P$  have opposite signs.  So either part (1) holds or there are two curves  $\gamma_1$, $\gamma_2$ of $\partial S'$ that are adjacent in $A_i$ ($i=0,\dots, m$) but lie in different components of $\partial_vY_P$.  
Let $A_\gamma$ be the subannulus of $A_i\subset \partial_vX_P$ bounded by $\gamma_1$ and $\gamma_2$.  
Since $\partial S'\subset\partial_vY_P$ and since $\gamma_1$ and $\gamma_2$ lie in different components of $\partial_vY_P$, $A_\gamma$ contains two subannuli of $\partial_vY_P$ next to $\partial A_\gamma$ and possibly some whole components of $\partial_vY_P$. In particular, $A_\gamma$ contains an even number of curves of $\partial S$.   

If $A_\gamma$ contains a whole component $E$ of $\partial_vY_P$, then since $\gamma_1$ and $\gamma_2$ are adjacent in  $A_i$, $E$ does not contain any curve of $\partial S'$, and by  Claim~\ref{Claimcurves}(1), the two boundary curves of $E$ must have opposite orientations. Recall that we view $P^-$ as an extended suspension surface obtained by tubing together $\Gamma_1,\dots,\Gamma_s$ and $D^2\times\{0\}$. So the two boundary curves of each $\partial$-parallel annulus $\Gamma_i$ have opposite orientations.  Any two adjacent curves of $\partial S$ in $A_\gamma$ are either boundary curves of a component $E$ of $\partial_vY_P$ or the boundary curves of $\Gamma_i$ for some $i=1,\dots, s$.  This implies that any two adjacent curves of $\partial S$ in $A_\gamma$ have opposite orientations.
Since $A_\gamma$ contains an even number of curves of $\partial S$, it follows from Claim~\ref{Claimcurves}(3, 4) that $\gamma_1$ and $\gamma_2$ must have opposite orientations. Hence part (1) of the claim holds.

An essential arc of the annulus $A_0$ is an arc connecting $P^-$ to $P^+$, which gives rise to an arc in $\mathcal{W}$ connecting the plus side of $S$ to its minus side. 
Since $S$ and $S'$ represent the same element in homology, this arc must intersect $S'$ an odd number of times, and this implies that $A_0\cap\partial S'$ contains an odd number of curves. Hence part (2) of the claim holds.  Similarly, an essential arc of $A_i$ ($i\ge 1$) is an arc connecting $P^+$ to $P^+$, so part (3) holds for the same homological reason.
\end{proof}

Next we temporarily ignore $P^-$ and construct a surface $P'$ in $X_P$ with $\partial P'=\partial S'$.

Let $\delta_0$ be the component of $A_0\cap \partial S'$ that is adjacent to the curve $\partial(D^2\times\{0\})$ in $A_0$, and let $\hat{\delta}_0$ be the disk in $B'=D^2\times I$ in the form of $D^2\times\{x\}$ bounded by $\delta_0$, see the 1-dimensional schematic picture in Figure~\ref{Fnonsep}(a).  By Claim~\ref{ClaimS'}(1), any two adjacent curves of $A_i\cap\partial S'$ have opposite orientations.  So we can use a collection of non-nested $\partial$-parallel annuli $\Gamma_1',\dots,\Gamma_t'$ to connect the remaining curves $\partial S'\setminus\delta_0$ in pairs, see Figure~\ref{Fnonsep}(a). Then we use tubes to connect the disk $\hat{\delta}_0$ and these $\partial$-parallel annuli $\Gamma_1',\dots,\Gamma_t'$ together in a manner similar to the construction of the extended suspension surface in section~\ref{StypeII}.  Denote the resulting surface by $P'$, so $\partial P'=\partial S'$.  See the red curves in Figure~\ref{Fnonsep}(b) for a schematic picture of $P'$.  Moreover, $P'$ has a normal direction compatible with the orientations of the curves in $\partial S'$.

\begin{figure}[h]
\begin{center}
\begin{overpic}[width=4in]{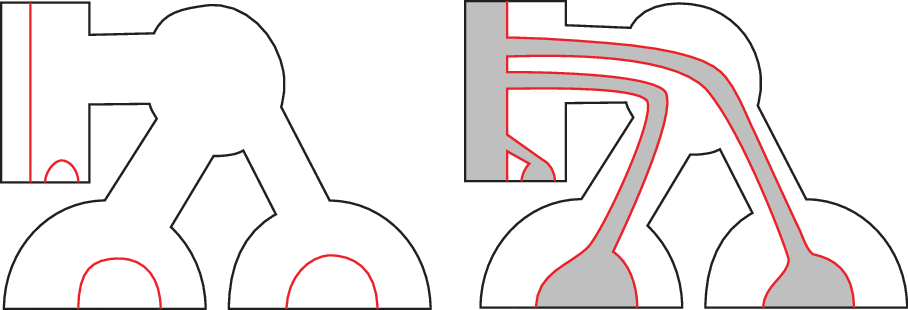}
\put(22,-5){(a)}
\put(74,-5){(b)}
\put(3.5, 25){$\hat{\delta}_0$} 
\put(34, 2){$\Gamma_i'$} 
\put(73, 19){$P'$}
\put(36.5, 29){$D^2\times\{0\}$}
\put(63, 2){$X_0$}    
\put(75, 29.5){$X_1$}    
\end{overpic}
\vspace{6pt}
\caption{Construct a suface $P'$ with $\partial P'=\partial S'$}\label{Fnonsep}
\end{center}
\end{figure}

As illustrated in Figure~\ref{Fnonsep}(b), $P'$ divides $X_P$ into two submanifolds $X_0$ and $X_1$ with $D^2\times\{0\}\subset \partial X_0$ and $P^+\subset\partial X_1$.  By the construction of $P'$, we may view $X_0$ as an extended marked handlebody and view $X_1$ as a type II block.    Since $\partial S$ and $\partial S'$ represents the same element in $H_1(\mathcal{T})$, the normal direction of $P'$ is compatible with the direction of $P^+$.  So the direction of $P'$ points into $X_1$ and out of $X_0$.  

Now we consider $P^-$. Recall that  the extended suspension surface $P^-$ was constructed by connecting $D^2\times\{0\}$ and 
a collection of $\partial$-parallel annuli $\Gamma_1,\dots,\Gamma_s$ using tubes.  Let $\widehat{\Gamma}_i$ be the solid torus bounded by $\Gamma_i$ and the subannulus of $\partial_vX_P$ between $\partial\Gamma_i$ .  The type II block $Y_P$ and each $\widehat{\Gamma}_i$ lie on different sides of $P^-$ in $X_P$.  
Since $\partial P'$ lies in $\partial_vY_P$, this implies that no curve of $\partial P'=\partial S'$ lies inside the solid torus $\widehat{\Gamma}_i$.  Thus, each solid torus $\widehat{\Gamma}_i$  ($i=1,\dots, s$) lies entirely in either $X_0$ or $X_1$ (possibly after shrinking $\widehat{\Gamma}_i$ if necessary).

\begin{claims}\label{Claimannuli}
$\Gamma_1,\dots,\Gamma_s$ all lie in $X_0$ (possibly after an isotopy relative to $\partial\Gamma_i$).  
\end{claims}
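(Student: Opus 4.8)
The plan is to show $\widehat\Gamma_j\subset X_0$ for every $j$ by producing, for each $j$, an arc from the interior of $\widehat\Gamma_j$ to a point of $F_0=D^2\times\{0\}$ that is disjoint from $P'$; since $F_0\subset\partial X_0$ and $P'$ separates $X_P$ into $X_0$ and $X_1$, this forces $\widehat\Gamma_j\subset X_0$. First I would record the structural descriptions of the two surfaces as (extended) suspension surfaces. Because $P^-$ is an extended suspension surface obtained by tubing $F_0$ to the annuli $\Gamma_1,\dots,\Gamma_s$, it separates $X_P$ into $Y_P$ and the marked-handlebody side $N'$, where $N'$ is a regular neighborhood of $B'$ together with the connecting tubes and the solid tori $\widehat\Gamma_1,\dots,\widehat\Gamma_s$; in particular $F_0\subset\partial N'$ and $\widehat\Gamma_i\subset N'$ for all $i$. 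Dually, $P'$ is a suspension-type surface based at the meridional disk $\hat\delta_0=D^2\times\{x\}$ of $B'=D^2\times I$, so the component $X_0$ of $X_P\setminus P'$ is precisely the one containing $F_0$ together with the sub-ball $D^2\times[0,x]\subset B'$ lying below $\hat\delta_0$, while all the tubes of $P'$ run through $D^2\times[x,1]$ and beyond.

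Next I would establish the disjointness. The core arc of the tube of $P^-$ joining $F_0$ to $\Gamma_j$ may be taken in the form of a vertical arc in the sub-ball $D^2\times[0,x]\subset B'$ followed by an arc running just off $\Gamma_j$ inside $\widehat\Gamma_j$. By the facts already established above---no curve of $\partial P'=\partial S'$ enters $\widehat\Gamma_j$, and, after the isotopy shrinking $\widehat\Gamma_j$, $P'$ misses $\widehat\Gamma_j$---the only possible intersections of this arc with $P'$ occur in the part lying in $B'$. Here I would first clean up $P^-\cap P'$ by an innermost-disk and outermost-arc argument, as in the proof of Lemma~\ref{Ldivide}, to remove inessential intersection curves and arcs, and then use that $\hat\delta_0$ cuts $B'$ exactly into $D^2\times[0,x]$, lying on the $F_0$-side of $P'$ (i.e.\ in $X_0$), and $D^2\times[x,1]$. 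This shows the arc from $\mathrm{Int}(\widehat\Gamma_j)$ to $F_0$ can be isotoped off $P'$, so its two endpoints lie in the same component of $X_P\setminus P'$, namely $X_0$; hence $\widehat\Gamma_j\subset X_0$. Doing this for all $j$ (and then pushing each $\Gamma_j$ near $a_j$ by an isotopy fixing $\partial\Gamma_j$) gives the claim. An alternative phrasing of the last step is homological: $P$, $P^-$ and $P'$ all represent a generator of $H_2(\widehat{\mathcal T},\mathcal T)\cong\mathbb Z$ with compatible coorientations, so if some $\widehat\Gamma_j$ lay in $X_1$ the arc from $\mathrm{Int}(\widehat\Gamma_j)$ to $F_0$ would meet $P'$ with nonzero algebraic intersection number, contradicting $[P^-]=[P']$.

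I expect the main obstacle to be precisely this disjointness step: controlling $P^-\cap P'$ inside the extended marked handlebody $X_P$ well enough to see that the connecting tubes of $P^-$ stay on the $F_0$-side of $P'$. The coorientation and homology bookkeeping is routine and parallels Claim~\ref{Claimcurves} and Claim~\ref{ClaimS'}; the care is all in the topological step, where one must combine the innermost/outermost reductions with the explicit product structures on $B'$ and on the $\widehat\Gamma_i$'s rather than appeal to a soft argument.
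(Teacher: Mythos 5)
Your argument has a genuine gap, and it is located exactly where you predicted the difficulty would be. The claim is not a soft separation statement that can be settled by isotopy and innermost/outermost cleanup: since $\partial P'$ is disjoint from the subannulus $a_j\subset\partial_v X_P$ cut off by $\partial\Gamma_j$, whether the (shrunk) solid torus $\widehat{\Gamma}_j$ lies in $X_0$ or $X_1$ is an isotopy-invariant question, and it is decided only by the orientation data. Your main arc argument never brings that data to bear, and its geometric ingredients are not correct as stated. First, the core arc of the tube of $P^-$ joining $F_0$ to $\Gamma_j$ cannot in general be taken inside $D^2\times[0,x]\cup\widehat{\Gamma}_j$: most $\Gamma_j$ are parallel to subannuli of $A_i\subset\partial T_i$ with $i\ge 1$, so the connecting arc must run through $D^2\times[x,1]$, the 1-handle $H_0$, the central ball $B$, another 1-handle, and into $T_i$ --- precisely the regions where the annuli $\Gamma'_1,\dots,\Gamma'_t$ and the tubes of $P'$ live, so there is no reason its intersections with $P'$ are confined to $B'$. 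Second, the description of $X_0$ as ``$F_0$ together with $D^2\times[0,x]$'' is not accurate: $X_0$ also contains the insides of the tubes of $P'$ and the solid tori behind some of the $\Gamma'_l$, so the picture on which the disjointness step rests is too simple. Third, an innermost-disk/outermost-arc reduction of $P^-\cap P'$ can only remove inessential intersections; it cannot show that an arc whose endpoints might lie on opposite sides of the separating surface $P'$ can be pushed off $P'$ --- that is the statement to be proved. Finally, the homological fallback is circular as phrased: for an arc with interior endpoints, ``nonzero algebraic intersection with $P'$'' is just a restatement of the endpoints lying in different components of $X_P\setminus P'$, i.e.\ of the hypothesis $\widehat{\Gamma}_j\subset X_1$; comparing with $[P^-]=[P']$ requires anchoring the arc so that the intersection pairing is well defined, and carrying that out amounts to the coorientation bookkeeping you set aside as routine.

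For comparison, the paper's proof is short and purely local, and it is exactly that bookkeeping: assuming some $\widehat{\Gamma}_i\subset X_1$, one chooses such a $\Gamma_i$ adjacent along $\partial_v X_P$ to a curve $\gamma$ of $\partial P'$, with an annulus $C\subset X_1$ between a component of $\partial\Gamma_i$ and $\gamma$ whose interior meets no $\partial\Gamma_j$. The normal direction of $\Gamma_i$ points out of $\widehat{\Gamma}_i$ and the normal direction of $P'$ points into $X_1$, so both coorientations point into $C$, forcing the two boundary curves of $C$ to have opposite orientations --- contradicting parts (3) and (4) of Claim~\ref{Claimcurves}. If you want to salvage your approach, you must replace the ``disjoint arc'' step by an argument of this kind (or an equivalent, carefully anchored intersection-number computation); as written, the proposal does not prove the claim.
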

\begin{proof}[Proof of Claim~\ref{Claimannuli}]
Suppose the claim is false. Then, since each solid torus $\widehat{\Gamma}_i$  ($i=1,\dots, s$) lies entirely in either $X_0$ or $X_1$, there must be some annulus $\Gamma_i$ with $\widehat{\Gamma}_i\subset X_1$.  
We may choose such a $\Gamma_i$ to be adjacent to some curve $\gamma$ of $\partial P'=\partial S'$.  In other words, choose $\Gamma_i$ so that there is an annulus $C$ between $\Gamma_i$ and a curve $\gamma$ of $\partial P'$ with the following properties: 
\begin{enumerate}
  \item the two components of $\partial C$ are $\gamma$ ($\gamma\subset\partial P'$) and a component of $\partial\Gamma_i$,
  \item  $C\subset X_1$,  
  \item $\Int(C)$ does not contain a boundary curve of any $\Gamma_j$  ($j=1,\dots, s$).
\end{enumerate}  
By the construction of $P^-$, the normal direction of $\Gamma_i$ points out of the solid torus $\widehat{\Gamma}_i$.  We have concluded earlier that the induced direction of $P'$ points into $X_1$.   
This implies that the normal directions at the two boundary curves of $C$ must both point into $C$. However, this means that the two curves of $\partial C$ have opposite orientations, contradicting Claim~\ref{Claimcurves}(3, 4). 
\end{proof}

Since $P^-$ is an extended suspension surface obtained by tubing together the $\partial$-parallel annuli $\Gamma_1,\dots,\Gamma_s$ and the disk $D^2\times\{0\}$, 
 Claim~\ref{Claimannuli} implies that $P^-$ can be constructed as an extended suspension surface in the extended marked handlebody $X_0$.  Hence the submanifold of $X_0$ between $P^-$ and $P'$ is a type II block.  In other words, $P'$ is a surface in $Y_P$ with $\partial P'=\partial S'$ and $P'$ divides $Y_P$ into two type II blocks.
 By gluing $P^-$ to $P^+$, we get back the solid torus $\widehat{\mathcal{T}}$.  Hence, $P$ and $P'$ divide $\widehat{\mathcal{T}}$ into a pair of type II blocks.

Let $S_1,\dots, S_n$ be the components of $\Sigma\cap \mathcal{W}$ that are non-separating in $\mathcal{W}$.  
As before, we may assign each $S_i$ a compatible normal direction so that they represent the same element in $H_2(\mathcal{W},\mathcal{T})$. 
By repeating the arguments above, we get a collection of non-separating planar surfaces $P_1,\dots, P_n$ in the solid torus $\widehat{\mathcal{T}}$ such that $\partial P_i=\partial S_i$ and $P_1,\dots, P_n$ divide $\widehat{\mathcal{T}}$ into a collection of type II blocks.

Assign each $P_i$ a normal direction compatible with the induced normal direction of $\partial P_i=\partial S_i$.  We refer to the two sides of each $P_i$ as the left and right sides with the normal direction of $P_i$ pointing from the left to the right side.

\vspace{10pt}

\noindent
{\bf Case (1)}.  $n=1$, i.e.~$\Sigma\cap\mathcal{W}$ contains only one non-separating surface $S_1$. 

\vspace{10pt}

Let $\widetilde{Y}$ be the manifold obtained by cutting $\mathcal{W}$ open along $S_1$, and let $\widetilde{X}_0$ be the type II block obtained by cutting $\widehat{\mathcal{T}}$ open along $P_1$.  
Let $P^+$ and $P^-$ be the two surfaces in $\partial_h\widetilde{X}_0$ representing the two sides of $P_1$, and suppose the induced normal direction of $P^-$ points into  $\widetilde{X}_0$. 
Let $P_1'$ be the surface obtained from $P_1$ by adding $g_1$  trivial tubes ($g_1=g(S_1)$)  on the right side of $P_1$, and let $\widetilde{X}$ be the manifold obtained by cutting $\widehat{\mathcal{T}}$ open along $P_1'$.  So $\widetilde{X}$ can be obtained from $\widetilde{X}_0$ by first adding $g_1$ 1-handles at $P^+$ and then drilling out $g_1$ trivial tunnels at $P^-$. $\widetilde{X}$ is also a type II block. 
Let $P_l$ and $P_r$ be the two components of $\partial_h\widetilde{X}$  representing the two sides of $P_1'$.  
Similarly, denote the two surfaces in $\widetilde{Y}$ representing the two sides of $S_1$  by $S_l$ and $S_r$ respectively. 

The surfaces of $\Sigma\cap\mathcal{W}$ that lie in $\Int(\widetilde{Y})$ divide $\widetilde{Y}$ into a stack of relative compression bodies.  
Since $S_1$ is the only non-separating component of $\Sigma\cap\mathcal{W}$,  there is a relative compression body $Y$ in the stack $\widetilde{Y}$ that contains both $S_l$ and $S_{r}$.  So $\partial_hY$ consists of $S_l$, $S_{r}$, and a collection of separating surfaces $Q_1,\dots,Q_k$ in $\mathcal{W}$.  

Next, we construct a collection of surfaces $Q_1',\dots, Q_k'$ in $\widetilde{X}$  such that (1)  $\partial Q_j'=\partial Q_j$ in $\mathcal{T}$, (2) $Q_j'\cong Q_j$  for all $j$, and (3) the submanifold $X$ bounded by $P_l$, $P_{r}$, $Q_1',\dots, Q_k'$ (and subannuli of $\partial_v \widetilde{X}$) is a relative compression body $\partial$-similar to $Y$.

The argument is similar to section~\ref{StypeI}, and the only difference here is that $\widetilde{X}$ is a type II block.  Instead of repeating the argument in section~\ref{StypeI}, we convert $\widetilde{X}$ into a type I block or a marked handlebody and then apply Lemma~\ref{LtypeI}. 

In section~\ref{StypeII}, we described a way of converting a type II block into a type I block by drilling out a tunnel $N(\alpha)$, where $\alpha$ is of the form $\{x\}\times I\subset D^2\times I=B'$ in the construction of an extended marked handlebody.  
We view $N(\alpha)=\Delta_\alpha\times I$, where $\Delta_\alpha$ is a disk, and suppose $\Delta_\alpha\times\{0\}\subset P_l$ and $\Delta_\alpha\times\{1\}\subset P_{r}$.  Let $P_l'=P_l\setminus(\Delta_\alpha\times\{0\})$, $P_{r}'=P_{r}\setminus (\Delta_\alpha\times\{1\})$, and $\mathcal{A}_\alpha=(\partial\Delta_\alpha)\times I$. 
Let $\widetilde{X}'=\widetilde{X}\setminus N(\alpha)$.

Next, we construct a relative compression body $X$ that is $\partial$-similar to $Y$. 

\vspace{10pt}

\noindent
{\it Subcase (1a)}.  $S_1$ has a minus sign.

\vspace{10pt}

In this case, $P_l$ and $P_r$ have minus signs. 
As in Remark~\ref{Rconversion}, instead of viewing $\widetilde{X}'=\widetilde{X}\setminus N(\alpha)$ as a type I block, we may view $\widetilde{X}'$ as a marked handlebody with $\partial_h \widetilde{X}'=P_l'\cup P_{r}'\cup \mathcal{A}_\alpha$. 

Now consider the relative compression body $Y$.  In this subcase, both $S_l$ and $S_{r}$ are components of $\partial_h^-Y$.  By Remark~\ref{Rproperty}(4), there is a core arc $\beta$ in $Y$ connecting $S_l$ to $S_{r}$, such that (1) $Y\setminus N(\beta)$ is a relative compression body and (2) $S_l$ and $S_{r}$ are tubed together and become a component of $\partial_h^-(Y\setminus N(\beta))$.

By Lemma~\ref{LtypeI}, we can build a collection of surfaces $Q_1',\dots, Q_k'$ in the marked handlebody $\widetilde{X}'=\widetilde{X}\setminus N(\alpha)$ such that (1) $\partial Q_j'=\partial Q_j$ in $\mathcal{T}$ and $Q_j'\cong Q_j$  for all $j$ and (2) the type I block, denoted by $X'$, between $\partial_h \widetilde{X}'$ and $Q_1',\dots, Q_k'$  is a relative compression body $\partial$-similar to $Y\setminus N(\beta)$.  Let $X$ be the manifold obtained by adding a 2-handle to $X'$ along the annulus $\mathcal{A}_\alpha$ (i.e.~filling the tunnel $N(\alpha)$).   Since the 2-handle is added to $\partial_h^- X'$, by Remark~\ref{Rproperty}(5), $X$ is a relative compression body $\partial$-similar to $Y$. 

\vspace{10pt}

\noindent
{\it Subcase (1b)}.  $S_1$ has a plus sign.

\vspace{10pt}

In this case, both $S_l$ and $S_r$ have plus signs. 
Let $\gamma$ be a $\partial$-parallel arc in $Y$ connecting $S_l$ to $S_r$ and parallel to an arc in $\partial^+Y$.  Let $N(\gamma)$ be a small neighborhood of $\gamma$ in $Y$.  We view $N(\gamma)=\Delta_\gamma\times I$ with $\Delta_\gamma\times\partial I\subset S_l\cup S_{r}$.  Let $\mathcal{A}_\gamma=(\partial\Delta_\gamma)\times I$.  Let $S_l'=S_l\setminus (\Delta_\gamma\times\partial I)$ and $S_r'=S_r\setminus (\Delta_\gamma\times\partial I)$. 
As in Remark~\ref{Rproperty}(7), after setting $\mathcal{A}_\gamma$ as a component of $\partial_v^- (Y\setminus N(\gamma))$ with a large order, $Y\setminus N(\gamma)$ is a relative compression body, where $S_l'$ and $S_r'$ are two components of $\partial_h^+(Y\setminus N(\gamma))$.

Now consider $\widetilde{X}'=\widetilde{X}\setminus N(\alpha)$ defined before Subcase (1a). The difference from Subcase (1a) is that, in this subcase, we view $\widetilde{X}'=\widetilde{X}\setminus N(\alpha)$ as a type I block (see section~\ref{StypeII}) and view $\mathcal{A}_\alpha$ as a component of $\partial_v^-\widetilde{X}'$ with induced order from $\mathcal{A}_\gamma\subset\partial_v^- (Y\setminus N(\gamma))$. Moreover, in this subcase, $\partial_h\widetilde{X}'=P_l'\cup P_r'$.

Since $\widetilde{X}'$ is a type I block, we may view $\widetilde{X}'$ as a submanifold of a marked handlebody as in the definition of type I block in section~\ref{StypeI}:  First take a marked handlebody $Z$ with $\partial_hZ=P_{r}'$, then view $P_l'$ as a standard surface in $Z$ such that the submanifold of $Z$ between $P_r'$ and $P_{l}'$ is the type I block  $\widetilde{X}'$.    

Now we apply Lemma~\ref{LtypeI} to the marked handlebody $Z$. 
By the construction in Lemma~\ref{LtypeI}, we can build a collection of standard surfaces $Q_1',\dots, Q_k'$ in $Z$, such that the cross-section disks for $P_l'$, $Q_1',\dots, Q_k'$ are well-positioned in $Z$, $\partial Q_i'=\partial Q_i$, and $Q_i'\cong Q_i$.  Moreover, the type I block, denoted by $X'$, bounded by $P_{r}'$, $P_l'$, $Q_1',\dots, Q_k'$ (and subannuli of $\partial_v\widetilde{X}'$) is a relative compression body that is $\partial$-similar to $Y-N(\gamma)$.  
Let $X$ be the manifold obtained by adding a 2-handle to $X'$ along the annulus $\mathcal{A}_\alpha$ (i.e.~filling the tunnel $N(\alpha)$). 
Since  $\mathcal{A}_\alpha$ is an annulus in  $\partial_v^- X'$, by Remark~\ref{Rproperty}(5), $X$ is a relative compression body $\partial$-similar to $Y$. 

In both subcases, each $Q_i'$ in $\widetilde{X}$ cuts off a marked handlebody from $\widetilde{X}$.   
By Lemma~\ref{LtypeI}, we can construct a collection of surfaces in each of these marked handlebodies, such that these surfaces, together with $Q_1',\dots, Q_k'$, divide $\widetilde{X}$ into a stack that is similar to the stack $\widetilde{Y}$.
After gluing $P_l$ to $P_r$, we have a collection of surfaces that divide $\widehat{\mathcal{T}}$ into a stack of relative compression bodies similar to the stack $\mathcal{W}$.

\vspace{10pt}

\noindent
{\bf Case (2)}. $n>1$.

\vspace{10pt}

Consider the non-separating surfaces $S_1,\dots, S_n$ in  $\Sigma\cap\mathcal{W}$ and suppose each $S_i$ is adjacent to $S_{i+1}$. 
We first show that there must be two adjacent non-separating surfaces $S_i$ and $S_{i+1}$ (set $S_{n+1}=S_1$) with the same sign.  The reason for this comes from the untelescoping construction. Recall in section~\ref{SGHS}, $\Sigma$ divides $M$ into a collection of compression bodies $\mathcal{A}_0,\dots,\mathcal{A}_m$ and $\mathcal{B}_0,\dots,\mathcal{B}_m$, where each $\mathcal{A}_i$ is obtained by adding 1-handles either to 0-handles or to $\mathcal{B}_{i-1}$ along $\mathcal{F}_i$, and each $\mathcal{B}_i$ is obtained by adding 2- and 3-handles to $\mathcal{A}_i$ along $\mathcal{P}_i$ ($i=0,\dots , m$), see Figure~\ref{Funtel}.  We can set a direction of ``adding handles" on these surfaces as follows: the direction for each $\mathcal{F}_i$ points from $\mathcal{B}_{i-1}$ to $\mathcal{A}_i$, and the direction for each $\mathcal{P}_i$ points from $\mathcal{A}_i$ to $\mathcal{B}_i$.  This is the direction of the handle addition, where one starts from the handlebody $\mathcal{A}_0$ and ends at the handlebody $\mathcal{B}_m$ by adding handles in this direction, see Figure~\ref{Funtel}. Note that Figure~\ref{Funtel} is the simplest diagram describing generalized Heegaard splittings, see \cite{SSS} for more complicated diagrams.  Therefore, if any two adjacent non-separating surfaces $S_i$ and $S_{i+1}$ (set $S_{n+1}=S_1$) have different signs, then the surfaces $\mathcal{F}_i$'s and $\mathcal{P}_i$'s appear alternately in $\mathcal{W}$, which implies that the handle-addition direction gives an oriented cycle. This is impossible because this is a direction for adding handles.

Without loss of generality, suppose $S_1$ and $S_2$ have the same sign.

The non-separating surfaces $S_1,\dots, S_n$ divide $\mathcal{W}$ into submanifolds $\widetilde{Y}_1,\dots, \widetilde{Y}_n$, where $\widetilde{Y}_i$ is the submanifold between $S_i$ and $S_{i+1}$.  
Similarly, $P_1,\dots, P_n$ divide $\widehat{\mathcal{T}}$ into a collection of type II blocks $\widetilde{X}_1,\dots, \widetilde{X}_n$, where $\widetilde{X}_i$ is the type II block between $P_i$ and $P_{i+1}$. 

First, add $g_1$ trivial tubes to $P_1$ on its right side (i.e.~in $\widetilde{X}_1$), where $g_1=g(S_1)$. Denote the resulting surface by $P_1'$. So $P_1'\cong S_1$.  Now replace $P_1$ by $P_1'$ and consider the collection of non-separating surfaces $P_1', P_2,\dots, P_n$ which divide $\widehat{\mathcal{T}}$ into submanifolds $\widetilde{X}_1', \widetilde{X}_2, \dots, \widetilde{X}_{n-1}, \widetilde{X}_n'$, where $\widetilde{X}_n'$ is obtained by adding $g_1$ 1-handles to $\widetilde{X}_n$,  
and $\widetilde{X}_1'$ is obtained from $\widetilde{X}_1$ by drilling out $g_1$ trivial  tunnels.  

Next we add $g_n$ tubes to $P_n$ on its right side ($g_n=g(S_n)$) and modify $\widetilde{X}_n'$.  We divide the discussion into two subcases.

\vspace{5pt}

\noindent
{\it Subcase (2a)}.  $S_n$ and $S_1$ have the same sign.

\vspace{5pt}

In this subcase, we simply add $g_n$ trivial tubes to $P_{n}$ on its right side and denote the resulting surface by $P_{n}'$.  So $P_n'\cong S_n$.  Now we replace $P_{n}$ by $P_{n}'$ and consider the new collection of surfaces $P_1', P_2,\dots, P_{n-1}, P_n'$ which divide $\widehat{\mathcal{T}}$ into  submanifolds $\widetilde{X}_1', \widetilde{X}_2, \dots, \widetilde{X}_{n-1}', \widetilde{X}_n''$, where $\widetilde{X}_n''$ is the submanifold between $P_n'$ and $P_1'$.  So $\widetilde{X}_n''$ is obtained from $\widetilde{X}_n'$ by drilling out  $g_n$ trivial tunnels, and $\widetilde{X}_{n-1}'$ is obtained by adding $g_n$ 1-handles to $\widetilde{X}_{n-1}$.

Since $P_n'$ and $P_1'$ have the same sign in this subcase,  $\widetilde{X}_n''$ has a similar structure as the manifold  $\widetilde{X}$ in Case (1).  So we can apply the argument on $\widetilde{X}$ in Case (1) to $\widetilde{X}_n''$ and construct a sequence of separating surfaces in $\widetilde{X}_n''$, such that these surfaces divide $\widetilde{X}_n''$ into a stack of relative compression bodies that is similar to the stack $\widetilde{Y}_n$.

\vspace{5pt}

\noindent
{\it Subcase (2b)}.  $S_n$ and $S_1$ have opposite signs.

\vspace{5pt}

Let $Y$ be the relative compression body in the stack $\widetilde{Y}_n$ that contains both $S_n$ and $S_1$. 
Since $S_n$ and $S_1$ have different signs, by Remark~\ref{Rproperty}(6), there is a vertical arc $\beta$ in $Y$ that connects $S_n$ to $S_{1}$, such that $Y-N(\beta)$ is a relative compression body.  More precisely, let $N(\beta)$ be a small neighborhood of $\beta$ in $Y$ and we view $N(\beta)=\Delta_\beta\times I$ with $\Delta_\beta\times\partial I\subset S_n\cup S_{1}$.  Let $\mathcal{A}_\beta=(\partial\Delta_\beta)\times I$.  Let $S_n'=S_n\setminus (\Delta_\beta\times\partial I)$ and $S_1'=S_1\setminus (\Delta_\beta\times\partial I)$. 
As in Remark~\ref{Rproperty}(6), we can choose an arc $\beta$ such that $Y\setminus N(\beta)$ is a relative compression body with $S_n'$ and $S_1'$ being two components of $\partial_h^\pm (Y\setminus N(\beta))$ and $\mathcal{A}_\beta$ a component of $\partial_v^0 (Y\setminus N(\beta))$.

Now we convert the type II block $\widetilde{X}_n'$ into a type I block, as in 
 section~\ref{StypeII}, by drilling out a tunnel $N(\alpha)$, where $\alpha$ is of the form $\{x\}\times I\subset D^2\times I=B'$ and $B'$ is the 3-ball in the construction of an extended marked handlebody.  
As in Case (1), we view $N(\alpha)=\Delta_\alpha\times I$, where $\Delta_\alpha$ is a disk, $\Delta_\alpha\times\{0\}\subset P_n$ and $\Delta_\alpha\times\{1\}\subset P_{1}'$.  
Let $P_n^c=P_n\setminus(\Delta_\alpha\times\{0\})$ and $P_{1}^c=P_{1}'\setminus (\Delta_\alpha\times\{1\})$.
Let $\widetilde{X}_n^c=\widetilde{X}_n'\setminus N(\alpha)$.    Let $\mathcal{A}_\alpha=(\partial\Delta_\alpha)\times I$. 
 So $\widetilde{X}_n^c$ is a type I block with $\partial_h \widetilde{X}_n^c=P_n^c\cup P_{1}^c$ and $\partial_v \widetilde{X}_n^c=\partial_v\widetilde{X}_n'\cup\mathcal{A}_\alpha$.

Similar to Subcase (1b), we may view the type I block $\widetilde{X}_n^c$ as a submanifold of a marked handlebody in section~\ref{StypeI} as follows:  First take a marked handlebody $Z$ with $\partial_hZ=P_{1}^c$, then view $P_n^c$ as a suspension surface in $Z$ such that the submanifold of $Z$ between $P_n^c$ and $P_{1}^c$ is the type I block  $\widetilde{X}_n^c$.   

Now we apply Lemma~\ref{LtypeI} to the marked handlebody $Z$. 
By the construction in Lemma~\ref{LtypeI}, we can build a collection of  surfaces $Q_1',\dots, Q_k'$ in $Z$, such that the cross-section disks for $P_n^c$ and $Q_1',\dots, Q_k'$ are well-positioned.  
We also add $g_n$ tubes to $P_n^c$ ($g_n=g(S_n)$) and denote the resulting surface by $P_n^d$.
Moreover, as  in  Lemma~\ref{LtypeI}, we can construct these surfaces and add these $g_n$ tubes so that the submanifold $Z^c$ between $P_1^c$, $P_n^d$, $Q_1',\dots, Q_k'$ 
 is a relative compression body $\partial$-similar to the relative compression body $Y-N(\beta)$.   

Note that, in the construction of Lemma~\ref{LtypeI} and as explained in Remark~\ref{RfixH}, some tubes of $P_n^d$ may be dragged through the 1-handles of $Z$, in other words, some tube of $P_n^d$ may be re-embedded into tubes that go through the $g_1$ tubes of $P_1'$.  
This is the main difference between Subcase (2a) and Subcase (2b).  

Let $X_n$ be the relative compression body obtained by adding a 2-handle to $Z^c$ along the annulus $\mathcal{A}_\alpha$ (i.e.~filling the tunnel $N(\alpha)$).  By Remark~\ref{Rproperty}(6), $X_n$ a relative compression body $\partial$-similar to $Y$.

Adding the 2-handle also extends the surface $P_n^d$ to a surface $P_n'$ with $P_n'\cong S_n$. 
Denote the resulting manifold between $P_n'$ and $P_1'$ by $\widetilde{X}_n''$.   
Similarly, we view $Q_1',\dots, Q_k'$ as separating surfaces in $\widetilde{X}_n''$. 
Since each $Q_i'$ cuts off a marked handlebody from $\widetilde{X}_n''$, as in Case (1), we can construct surfaces in $\widetilde{X}_n''$ which divide $\widetilde{X}_n''$ into a stack of relative compression bodies that is similar to the stack $\widetilde{Y}_n$.  

So, in both subcases, we can change $P_n$ to $P_n'$ and constructed surfaces that divide $\widetilde{X}_n''$ into a stack of relative compression bodies that is similar to the stack $\widetilde{Y}_n$.   

The operation of changing $P_n$ to $P_n'$ also affects the type II block $\widetilde{X}_{n-1}$.  For $\widetilde{X}_{n-1}$, the effect of adding tubes to $P_n$ on its right side is simply adding 1-handles to $\widetilde{X}_{n-1}$, and 
the topological type of the resulting manifold $\widetilde{X}_{n-1}'$ does not depend on whether or not the tubes that we added to $P_n$ are trivial tubes. Moreover, the possible tube re-embedding explained above and in Remark~\ref{RfixH} does not change the topological type of $\widetilde{X}_{n-1}'$ either. Thus $\widetilde{X}_{n-1}'$ can be viewed as the manifold obtained by adding $g_n$  1-handles to $\widetilde{X}_{n-1}$ at $P_n$.

Consider the new collection of surfaces $P_1', P_2,\dots, P_{n-1}, P_n'$ which divide $\widehat{\mathcal{T}}$ into submanifolds $\widetilde{X}_1', \widetilde{X}_2,\dots, \widetilde{X}_{n-2},\widetilde{X}_{n-1}', \widetilde{X}_n''$. 
Next, we inductively carry out this construction on $P_{n-1},\dots, P_2$ and successively modify the planar surfaces $P_{n-1},\dots, P_2$ into $P_{n-1}',\dots, P_2'$, with $P_i'\cong S_i$ for each $i$.  
$P_1',\dots, P_n'$ divide $\widehat{\mathcal{T}}$ into submanifolds $\widetilde{X}_{1}'',\dots, \widetilde{X}_n''$.  Similarly, we construct separating surfaces in $\widetilde{X}_{n-1}'',\dots, \widetilde{X}_2''$ which divide each $\widetilde{X}_{i}''$ into a stack of relative compression bodies  similar to the stack $\widetilde{Y}_{i}$ ($i=2,\dots, n-1$).

The last step is to consider the type II block $\widetilde{X}_{1}''$ between $P_1'$ and $P_2'$.  Recall that $P_1'$ is obtained by adding trivial tubes to $P_1$, so the topological structure of $\widetilde{X}_{1}''$ is similar to that of $\widetilde{X}$ in Case (1).  Since both $S_1$ and $S_2$ have the same sign, we can apply the construction on $\widetilde{X}$ in Case (1) to $\widetilde{X}_{1}''$ which makes $\widetilde{X}_{1}''$ a stack of relative compression bodies that is similar to the stack $\widetilde{Y}_{1}$. 
By gluing these stacks together, we obtain a stack in $\widehat{\mathcal{T}}$ similar to the stack $\mathcal{W}$.

In both Case (1) and Case (2) above, by gluing the stack of relative compression bodies in $\widehat{\mathcal{T}}$ to the stack of relative compression bodies in $\mathcal{V}$, as in section~\ref{Ssep}, we obtain a generalized Heegaard splitting for $N=\widehat{\mathcal{T}}\cup_\mathcal{T} \mathcal{V}$ with genus equal to $g(M)$.  This means that $g(N)\le g(M)$ and Theorem~\ref{Ttorus} holds.

%%%%%%%%%%%%%%

\end{document}